\setlist[enumerate, 1]{label=\Roman*.}
\setlist[enumerate, 2]{label*=\Alph*.}
\setlist[enumerate, 3]{label*=\alph*.}
\setlist[enumerate, 4]{label*=\arabic*}
\newtheorem{thm}{Theorem}
\newtheorem{lem}{Lemma}[section]
\newtheorem{prop}{Proposition}[section]
\newtheorem{cor}[thm]{Corollary}
\theoremstyle{definition}
\newtheorem{defn}{Definition}[section]
\newtheorem{exmp}{Example}[section]
\theoremstyle{remark}
\newtheorem{rem}{Remark}[section]
\title{\normalsize\textbf{ON POSITIVE BRAIDS, MONODROMY GROUPS AND FRAMINGS}}
\date{}
\author{Livio Ferretti}
\begin{document}
	
\maketitle

\begin{abstract}
		We associate to every positive braid a group, generalizing the geometric monodromy group of an isolated plane curve singularity. If the closure of the braid is a knot, we identify the corresponding group with a framed mapping class group. In particular, this gives a well defined knot invariant. As an application, we obtain that the geometric monodromy group of an irreducible singularity is determined by the genus and the Arf invariant of the associated knot.
\end{abstract}

\tableofcontents
	
\section{Introduction}\label{section_introduction}

Singularity theory is a genuine source of examples and inspiration for knot theory. Since the topological type of an isolated plane curve singularity is determined by an associated link, it is possible to understand properties of the singularity from a knot theoretical viewpoint, and knot theory has been successfully applied to solve algebraic questions. In another direction, links of singularities form an interesting class of links, with special properties and invariants that follow from the whole machinery of singularity theory. It is often unclear which of those properties are inherently algebraic and which ones could be generalized to wider classes of knots and links. Among other invariants, the fundamental group of the discriminant complement and the geometric monodromy group have drawn much attention but have proved to be hard to investigate.

In \cite{baader_secondary_2021}, Baader and Lönne associate to any positive braid an abstract group defined by generators and relations, which they call secondary braid group. Their motivation comes from the similarities between the combinatorial structure of positive braids and that of isolated plane curve singularities. In particular, they prove that for braids of type $ADE$ and for braids of minimal braid index whose closure is a torus link $T_{p,q}$ the secondary braid group is isomorphic to the fundamental group of the discriminant complement of the corresponding singularities (simple singularities in the former case, Brieskorn-Pham singularities $f(x,y) = x^p+y^q$ in the latter, see \cite{lonne_fundamental_2007}). However, because of difficulties in dealing with conjugation in the positive braid monoid, they can prove that the secondary braid group is a well defined link invariant only for positive braids whose closure contains a positive half twist.

Inspired by their construction and in analogy with the definition of the geometric monodromy group of a singularity, in this article we associate to any positive braid $\beta$ a group $\mathit{MG}(\beta)$, which we call the monodromy group of the positive braid, defined as a subgroup of the mapping class group of the unique genus minimizing Seifert surface of the closure $\hat{\beta}$, generated by the Dehn twists around some natural family of curves. The monodromy group of a positive braid is a quotient of Baader's and Lönne's secondary braid group which contains the monodromy diffeomorphism of the positive braid. Moreover, it is a generalization of the geometric monodromy group of an isolated plane curve singularity to the setting of positive braids.

\begin{thm}\label{theorem_group_singularity}
	Let $f:\mathbb{C}^2 \rightarrow \mathbb{C}$ define an isolated plane curve singularity and $L(f)$ be the link of $f$. Then there exists a positive braid $\beta$ representing $L(f)$ such that the geometric monodromy group of $f$ is equal to $\mathit{MG}(\beta)$.
\end{thm}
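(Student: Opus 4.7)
The plan is to deduce the theorem from A'Campo's theory of real morsifications. An isolated plane curve singularity $f$ admits a real morsification $\tilde{f}$, a real-analytic deformation whose complex critical points are Morse and real. A classical theorem of A'Campo and Gusein-Zade produces from $\tilde{f}$ a distinguished system of vanishing cycles on the Milnor fibre, one for each real critical point, with the property that the Dehn twists along them generate the geometric monodromy group of $f$. The real morsification can be encoded in a divide $D \subset \mathbb{R}^2$, an immersed real-algebraic curve whose generic double points correspond bijectively to the real Morse critical points of $\tilde{f}$.

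Next, from the divide $D$ one extracts a positive braid $\beta_D$ whose closure is the link $L(f)$ and whose Bennequin surface is isotopic to the Milnor fibre of $f$; this follows from A'Campo's and Hirasawa's constructions of a braided representative along the divide with a natural height function. Under this identification, the crossings of $\beta_D$ are in bijection with the double points of $D$, hence with the Morse critical points of $\tilde{f}$. Since the closure of $\beta_D$ is a fibred link whose fibre realises the minimal genus, the Bennequin surface is precisely the unique genus minimising Seifert surface of $\hat{\beta_D}$ on which $\mathit{MG}(\beta_D)$ is defined.

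It then remains to verify that, under this identification, the simple closed curves on the Bennequin surface whose Dehn twists are the defining generators of $\mathit{MG}(\beta_D)$ coincide (up to isotopy) with A'Campo's vanishing cycles. Once this is established, both groups sit inside the mapping class group of the same surface and are generated by Dehn twists along the same collection of curves, so they agree. I expect the main obstacle to lie precisely in this curve-matching step: the curves defining $\mathit{MG}(\beta_D)$ are prescribed locally at each crossing of the braid, whereas A'Campo's vanishing cycles are built from the combinatorics of the regions of the divide. Making the comparison explicit requires tracking how the Bennequin surface sits inside the neighbourhood of the divide and identifying, crossing by crossing, the local picture of a natural curve on the Seifert surface with the boundary of the Lefschetz thimble attached at the corresponding critical point. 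Once performed for a single crossing, the identification extends globally by naturality.
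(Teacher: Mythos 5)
Your overall strategy is the same as the paper's: pass from $f$ to a divide, use the A'Campo--Hirasawa constructions to realise the Milnor fibre as the Bennequin surface of a positive braid, and then match vanishing cycles with the standard curves. However, there are three concrete gaps in the execution. First, a general divide does not yield a positive braid: the paper must choose an \emph{ordered Morse} divide (all local extrema of the Morse projection in the exterior face), which exists for every singularity, precisely because for general divides Hirasawa's embedded fibre surface is not the surface of a positive braid. Your proposal extracts ``a positive braid $\beta_D$'' from an arbitrary divide without addressing this.

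Second, and more seriously, your claim that the double points of $D$ correspond bijectively to the Morse critical points of $\tilde f$ is false: the double points account only for the saddles, while the local maxima and minima of $\tilde f$ correspond to the interior \emph{faces} of the divide. Accordingly, A'Campo's distinguished vanishing cycles come in two types --- one per crossing and one per face --- and it is the face cycles (which run around an entire region of the divide, through several half-twisted bands) that cause the difficulty. Your proposed resolution of the curve-matching step, namely a local verification at a single crossing that ``extends globally by naturality,'' cannot handle these non-local cycles. Third, even after the correct identification of the surface, the vanishing cycles do \emph{not} coincide curve-by-curve with the standard curves $\gamma_i$ defining $\mathit{MG}(\beta)$; the paper concludes only that the two \emph{groups} agree, via the conjugation identity $T_{h(\gamma)} = h T_\gamma h^{-1}$ used in the proof of elementary conjugation invariance (Proposition~\ref{prop_elem_conj}). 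A proof that insists on matching the generating curves up to isotopy, as yours does, would get stuck at this point.
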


In \cite{portilla_cuadrado_vanishing_2021}, Cuadrado and Salter proved that the geometric monodromy group of any singularity of genus at least $5$ and not of type $A_n$ and $D_n$ is a framed mapping class group, i.e. the stabilizer of some canonical framing on the Milnor fibre associated to the singularity and, among other things, they use this result for deducing the non-injectivity of the geometric monodromy representation. Following their approach, the main result of this paper is an identification of the monodromy group of a positive braid $\beta$ whose closure is a knot with a framed mapping class group on the genus minimizing surface $\Sigma_{\beta}$.


\begin{thm}\label{theorem_framed}
	Let $\beta$ be a prime positive braid not of type $A_n$ and whose closure is a knot. Then, for all but finitely many such braids, there exists a framing $\phi_{\beta}$ on $\Sigma_{\beta}$ such that the monodromy group $\mathit{MG}(\beta)$ is equal to the framed mapping class group $\mathrm{Mod}(\Sigma_{\beta},\phi_{\beta})$.
\end{thm}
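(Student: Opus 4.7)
The plan is to follow the strategy of Portilla Cuadrado and Salter \cite{portilla_cuadrado_vanishing_2021} and adapt it to the purely braid-theoretic setting. The first step is to construct the framing $\phi_\beta$ intrinsically from the positive braid. The generators of $\mathit{MG}(\beta)$ are Dehn twists about a canonical family of simple closed curves $c_1,\dots,c_N$ on $\Sigma_\beta$ (the analogues of vanishing cycles). A framing of $\Sigma_\beta$ is equivalent to a winding number function on its isotopy classes of simple closed curves, and the Dehn twist about a nonseparating curve $c$ preserves such a framing exactly when the winding number of $c$ vanishes. Thus $\phi_\beta$ is prescribed by requiring $\phi_\beta(c_i)=0$ for every $i$. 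I would first check that this constraint is consistent: the homology classes $[c_i]$ generate $H_1(\Sigma_\beta;\mathbb{Z})$ (because the $c_i$ can be arranged into a basis coming from the Seifert matrix of a positive braid), so the linear system defining $\phi_\beta$ on homology has at most one solution, and consistency reduces to checking that the relations among the $[c_i]$ are compatible with zero winding number, which can be read off the combinatorics of the braid.

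With $\phi_\beta$ in hand, the inclusion $\mathit{MG}(\beta)\subseteq \mathrm{Mod}(\Sigma_\beta,\phi_\beta)$ is immediate from the defining property of $\phi_\beta$ together with the fact that all generators are Dehn twists about curves with winding number $0$. The substantive direction is the reverse containment, and here the strategy is to invoke a generation theorem for framed mapping class groups in the style of Calderon--Salter: given a collection of admissible simple closed curves on a surface of sufficiently large genus whose associated intersection graph contains certain sub-configurations (an $E_6$-type sub-configuration plus enough connectivity) and whose homology classes satisfy suitable spanning conditions, the subgroup generated by the corresponding Dehn twists is the full framed mapping class group of the associated framing.

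The main work, and the principal obstacle, is therefore to verify these hypotheses for the curve system $\{c_i\}$ attached to an arbitrary prime positive braid whose closure is a knot. Primeness should translate into the intersection graph being connected and non-degenerate (this is the analogue of primeness of a singularity used in \cite{portilla_cuadrado_vanishing_2021}), while excluding the $A_n$-family removes exactly the degenerate linear chains for which no $E_6$ configuration is available. I would proceed combinatorially: using the braid structure, locate within the curves $\{c_i\}$ explicit sub-configurations of type $E_6$ (and of type $D_n$ when needed) and verify that the homology classes of the $c_i$ hit each orbit of the mapping class group action on admissible simple closed curves. This is where the assumption ``for all but finitely many'' enters: for braids of low complexity (small number of strands and small word length) one cannot guarantee both a sufficiently large genus and the presence of the required sub-configurations, so these must be excluded by an explicit enumeration, paralleling the low-genus exclusions in \cite{portilla_cuadrado_vanishing_2021}.

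Finally, once the generation theorem applies, it yields $\mathrm{Mod}(\Sigma_\beta,\phi_\beta)\subseteq \mathit{MG}(\beta)$ and hence equality. The hardest step is the combinatorial one, namely showing that the intersection pattern of the natural vanishing-cycle system of an arbitrary prime non-$A_n$ positive braid is rich enough to meet the hypotheses of Salter's criterion; this requires translating structural facts about positive braid words (stabilization, existence of full twists on sub-strands, etc.) into statements about sub-configurations of the curves $c_i$, and is where most of the paper's technical work is expected to lie.
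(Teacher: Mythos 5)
Your proposal follows essentially the same route as the paper: the framing $\phi_{\beta}$ is constructed so that all the standard curves are admissible (the paper builds it explicitly on the disk-and-band surface rather than by prescribing winding numbers on a basis, but this is equivalent), the inclusion $\mathit{MG}(\beta)\leqslant \mathrm{Mod}(\Sigma_{\beta},\phi_{\beta})$ is immediate, and the reverse inclusion is obtained from the Calderon--Salter generation theorem by locating an $E$-arboreal spanning configuration of genus at least $5$ inside the linking graph, with the finitely many exceptions arising from low-complexity braids. The bulk of the paper is precisely the combinatorial case analysis (split by braid index: $3$ strands, $\geq 11$ strands, and the intermediate range) that you correctly identify as the principal remaining work.
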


For the definition of a positive braid of type $A_n$, see Section~\ref{section_definition}. It is important to mention that the infinite family of braids of type $A_n$ that we exclude from Theorem~\ref{theorem_framed} is in fact one of the only cases where the monodromy group was already explicitly known: it is isomorphic to the Artin group of the corresponding type \cite{perron_groupe_1996}. Those groups are not isomorphic to any framed mapping class group, so their exclusion is a necessity, rather than a limitation of any sort.

Of course, as a consequence of Theorems~\ref{theorem_group_singularity} and~\ref{theorem_framed}, in the restricted context of singularities we immediately obtain that the geometric monodromy group of an irreducible singularity is controlled by a framing. In fact, as explained in Remark~\ref{remark_links_sing}, one can see that our proof of Theorem~\ref{theorem_framed} also applies to many links, including links of singularities not of type $A_n$ and $D_n$, thus recovering the results of \cite{portilla_cuadrado_vanishing_2021} up to finitely many exceptions. On the other hand, there are some infinite families of positive braid links for which our methods do not seem to work, see Remark~\ref{remark_links}. In spite of the increased combinatorial difficulty, working in the more general setting of positive braids has some advantages, as we will now explain.

Since the topological type of a singularity is completely determined by its link, a priori every topological invariant of a singularity should be somehow readable from the link. For instance, the Milnor number corresponds to the minimal first Betti number, while the multiplicity corresponds to the braid index \cite{williams_braid_1988}. However, this translation is often far from straightforward. Now, it turns out that framed mapping class groups are determined by the value of the framing on the boundary components of the surface and a certain Arf invariant associated to the framing. In the case of a surface $\Sigma$ with connected boundary, the value of the framing on the boundary is always equal to the Euler characteristic of $\Sigma$, so that the framed mapping class group is determined simply by the genus of $\Sigma$ and the Arf invariant of the framing. Working with positive braids, we are able to identify the Arf invariant of the framing with the classical Arf invariant of the boundary knot. We thus obtain the following corollaries, expressing the geometric monodromy group of an irreducible singularity in terms of well known invariants of its knot.


\begin{cor}\label{cor_knots}
	Let $\beta$ be a prime positive braid not of type $A_n$ and whose closure is a knot $K$. Up to finitely many exceptions, the monodromy group of $\beta$ is an invariant of $K$, determined by its genus and Arf invariant.
\end{cor}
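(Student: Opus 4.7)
The plan is to feed Theorem~\ref{theorem_framed} into the classification of framed mapping class groups on a surface with connected boundary. First, for a prime positive braid $\beta$ not of type $A_n$ whose closure $\hat{\beta} = K$ is a knot, Theorem~\ref{theorem_framed} (outside finitely many exceptions) identifies $\mathit{MG}(\beta)$ with $\mathrm{Mod}(\Sigma_{\beta},\phi_{\beta})$. The genus of $\Sigma_{\beta}$ equals the Seifert genus $g(K)$ since $\Sigma_\beta$ is the (unique) genus minimising Seifert surface of the positive braid closure, so $g(\Sigma_\beta)$ is already a knot invariant. Moreover, since $\partial\Sigma_\beta$ is connected, the winding number of $\phi_\beta$ on the boundary must equal $\chi(\Sigma_\beta) = 1 - 2g(K)$, and in particular this boundary datum is also determined by $K$.

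Next, I would invoke the classification of framed mapping class groups of a surface $\Sigma_g^1$ with one boundary component: for $g$ sufficiently large, $\mathrm{Mod}(\Sigma_g^1, \phi)$ depends only on $g$ and the Arf invariant of the mod~$2$ reduction of $\phi$ (this is the framed analogue of the classical Dehn--Johnson classification of spin mapping class groups, and is implicit in the work used by Portilla Cuadrado--Salter). Combined with the previous step, this reduces Corollary~\ref{cor_knots} to identifying $\mathrm{Arf}(\phi_\beta)$ as an invariant of $K$, at the cost of throwing in additional exceptions in low genus.

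The key remaining step, which I expect to be the main obstacle, is to prove
\[
\mathrm{Arf}(\phi_\beta) \;=\; \mathrm{Arf}(K),
\]
where the right hand side is the classical Arf invariant of the knot. To do this I would compute $\mathrm{Arf}(\phi_\beta)$ on the natural basis of $H_1(\Sigma_\beta; \mathbb{Z}/2)$ coming from the vanishing cycles associated with the positive braid word, using the definition of the framing $\phi_\beta$ constructed in the proof of Theorem~\ref{theorem_framed}. The classical Arf invariant $\mathrm{Arf}(K)$ is computed on the same basis from the Seifert form modulo $2$, via $q(c) = \ell k(c, c^+) \bmod 2$. The combinatorics of positive braids should give an explicit matching between the $\phi_\beta$-winding of a vanishing cycle and the self-linking $\ell k(c,c^+)$ on $\Sigma_\beta$, so that the two quadratic refinements agree. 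Together with the knot invariance of genus and Arf, this yields that $\mathit{MG}(\beta)$ depends only on $g(K)$ and $\mathrm{Arf}(K)$, proving the corollary.
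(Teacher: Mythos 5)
Your proposal follows essentially the same route as the paper: Theorem~\ref{theorem_framed} identifies $\mathit{MG}(\beta)$ with $\mathrm{Mod}(\Sigma_\beta,\phi_\beta)$, the Randal-Williams classification shows framed mapping class groups on $\Sigma_{g,1}$ are determined by $g$ and the Arf invariant of the framing, and the key step $\mathcal{A}(\phi_\beta)=\mathcal{A}(K)$ is exactly the paper's Proposition~\ref{prop_arf_invariant}, proved just as you suggest by observing that both quadratic refinements take the value $1$ on the standard basis of admissible curves $\gamma_i$ and hence coincide. Your plan is correct and matches the paper's argument.
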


\begin{cor}\label{cor_singularities}
	Let $f$ define an irreducible isolated plane curve singularity that is not of type $A_n$ and $K(f)$ be the knot of the singularity. For all but finitely many such singularities, the geometric monodromy group of $f$ is determined by the genus and the Arf invariant of $K(f)$.
\end{cor}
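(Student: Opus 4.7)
The plan is to derive Corollary \ref{cor_singularities} directly by combining Theorem \ref{theorem_group_singularity} with Corollary \ref{cor_knots}. Given an irreducible isolated plane curve singularity $f$ that is not of type $A_n$, irreducibility implies that the link $L(f) = K(f)$ has a single component and is hence a knot. By Theorem \ref{theorem_group_singularity}, I can fix a positive braid $\beta$ representing $K(f)$ such that the geometric monodromy group of $f$ coincides with $\mathit{MG}(\beta)$. From here the strategy is to verify the hypotheses of Corollary \ref{cor_knots} for this particular $\beta$ and then read off the conclusion.

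Next I would check that $\beta$ is prime in the sense required by Corollary \ref{cor_knots}. This should follow from the well-known fact that the knot of an irreducible plane curve singularity is an iterated torus knot and in particular prime; I would trace through the construction behind Theorem \ref{theorem_group_singularity} to ensure the chosen $\beta$ inherits primeness at the level of positive braids (and not merely at the level of the closure). In parallel, I would confirm that the singularity not being of type $A_n$ forces $\beta$ not to be of type $A_n$ in the sense of Section~\ref{section_definition}, so that $\beta$ is not among the excluded family of Theorem \ref{theorem_framed}.

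With these compatibility checks in hand, Corollary \ref{cor_knots} applied to $\beta$ yields that, outside a finite list of exceptional braids, $\mathit{MG}(\beta)$ depends only on the genus and the Arf invariant of the knot $\hat\beta = K(f)$. Pulling this back along the identification of Theorem \ref{theorem_group_singularity} shows that the geometric monodromy group of $f$ is determined by the genus and Arf invariant of $K(f)$, outside the finite set of singularities whose associated braid falls into the exceptional list of Corollary \ref{cor_knots}.

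The main obstacle I expect is the bookkeeping around the $A_n$ exclusion and primality: I need to be sure that the correspondence between singularity types and braid types is clean enough that ``$f$ not of type $A_n$'' transfers to ``$\beta$ not of type $A_n$'' for the braid produced by Theorem \ref{theorem_group_singularity}, and that the finite exceptional set arising at the braid level gives only finitely many singularities upstairs. Once those matchings are settled, the proof is essentially a one-line citation of the preceding results.
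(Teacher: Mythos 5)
Your proposal is correct and follows exactly the route the paper intends: the paper states this corollary as an immediate consequence of Theorem~\ref{theorem_group_singularity} combined with Theorem~\ref{theorem_framed} (via Corollary~\ref{cor_knots}), and your compatibility checks (primeness of the closure, the fact that ``type $A_n$'' is defined via the link so the exclusion transfers, and finiteness of the exceptional set upstairs since the link determines the topological type of the singularity) all go through as you anticipate.
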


It is important to point out that the monodromy group of a positive braid is proved to be an invariant of the braid closure only if the latter is connected; for braids whose closure is disconnected, the strongest invariance result is Corollary~\ref{cor_conj_invariance}.
 
From a purely knot theoretical viewpoint, Theorem~\ref{theorem_framed} might seem disappointing. It implies that, if the closure of a positive braid is a knot (up to finitely many exceptions), its monodromy group is an invariant of the knot, but a rather useless one: it is hard to compute, but determined by two classical and much easier invariants, a natural number and a mod $2$ class. Its interest lies in negative results such as Corollary~\ref{cor_singularities}. The geometric monodromy group, which was typically considered a rich yet hard to investigate invariant of a plane curve singularity, turns out, in the case of irreducible singularities, to be determined by two simple knot invariants, and the question whether two irreducible singularities have the same geometric monodromy group can be answered by a direct and easy computation, using existing formulas for the Arf invariant of a knot. Of course, for each fixed genus there are many different irreducible singularity, so there will be different singularities with the same geometric monodromy group. We believe that for big enough genus both values of the Arf invariant are realized, so that there would be exactly two geometric monodromy groups.

The study of the monodromy group of a positive braid has naturally its place in the context of finitely generated subgroups of the mapping class group, and in particular subgroups generated by Dehn twists around a family of curves with prescribed intersection pattern. Those subgroups are interesting by themselves from a mapping class group theoretical viewpoint, but also appear naturally in different contexts, such as singularity theory or in the study of Lefschetz fibrations. The question of what groups can arise in this way is completely solved in the case of two Dehn twists, see for example Chapter $3$ of \cite{farb_primer_2012}, but is in general widely open. In \cite{perron_groupe_1996} Perron and Vannier, interested in the geometric monodromy group of singularities, proved that if the intersection pattern of the curves is a Dynkin diagram of type $A_n$ or $D_n$, the group generated by the Dehn twists is isomorphic to the Artin group of corresponding type, and conjectured this to be true for general graphs. This was later disproved by Labruère \cite{labruere_generalized_1997} and Wajnryb \cite{wajnryb_artin_1999}, whose results show that the only Artin groups whose Dynkin diagram is a tree and that geometrically embed in the mapping class group are precisely the ones of type $A_n$ and $D_n$. Notice that, contrary to what Wajnryb claimed, the Artin groups of type $\tilde{A}_n$, i.e. whose Dynkin diagram is a cycle, do geometrically embed in the mapping class group, as recently proved by Ryffel in \cite{ryffel_curves_2023}. The theory of framed mapping class groups seems to suggest that, at least if the intersection pattern is in some sense rich enough, those finitely generated subgroups are controlled by a framing on the surface. Theorem~\ref{theorem_framed} is an example of such a result.

As a final remark, although in this paper we concentrate only on positive braids, they are not the only natural class of links generalizing links of singularities to which one could try to associate a monodromy group. A'Campo's divide links form another such interesting family, see Section~\ref{section_singularities}. More generally, it is known that the Milnor fibre of an isolated plane curve singularity can be constructed by a sequence of positive Hopf plumbings such that the core curves of the Hopf bands coincide with a distinguished basis of vanishing cycles of the singularity, the Dehn twist around which generate the geometric monodromy group, see \cite{ishikawa_plumbing_2002}. As explained in Remark~\ref{rem_plumbings}, this is also the case for the monodromy group of a positive braid. Going one step further, for a general sequence of positive Hopf plumbings, one could define a monodromy group as the group generated by all the Dehn twists around the core curves of the Hopf bands. We expect that, at least for knots, results similar to Theorem~\ref{theorem_framed} should hold in this more general setting. This is not difficult to see for Hopf plumbings with intersection pattern a tree and whose boundary is a knot of sufficiently big genus.

\paragraph{Structure of the paper:} In Section~\ref{section_definition} we define the monodromy group of a positive braid and prove some basic invariance properties. In Section~\ref{section_singularities} we recall some basics of singularity theory and, using A'Campo's theory of divides, we prove Theorem~\ref{theorem_group_singularity}. In Section~\ref{section_framings} we discuss the general theory of framed mapping class groups and construct the framing appearing in Theorem~\ref{theorem_framed}. Finally, Section~\ref{section_proof} is the technical part of the paper, in which we prove Theorem~\ref{theorem_framed}. This basically consists of a lengthy case distinction that allows us to apply general results about framed mapping class groups.

\paragraph{Acknowledgements:} I wish to thank Sebastian Baader for suggesting the topic and guiding me through this project. I am also very grateful to Livio Liechti for the several interesting discussions, and in particular for pointing out the connection to framed mapping class groups. Finally, thanks to Nick Salter, Pablo Portilla Cuadrado and Michael Lönne for their interesting comments, and to the anonymous referee for the useful suggestions that greatly improved the exposition.

\section{The monodromy group of a positive braid}\label{section_definition}

Let $B_N^+$ be the monoid of positive braids on $N$ strands and $\beta \in B_N^+$. We will usually represent such a braid with a \textit{brick diagram}, a plane graph with $N$ vertical lines connected by horizontal segments corresponding to the crossings. Since all the crossings are positive, one can reconstruct the braid from the brick diagram. It is well known that, if $\beta$ is non-split, its closure $\hat{\beta}$ is a fibred link, whose fibre surface can be constructed by taking a disk for each strand of $\beta$ and, for each generator $\sigma_i$ in $\beta$, gluing a half-twisted band between the $i$-th and $(i+1)$-th disks. The brick diagram of $\beta$ naturally embeds in this surface as a retract. Let us denote this fibre surface by $\Sigma_\beta$, and let $g$ be its genus and $r$ the number of boundary components. On $\Sigma_\beta$ there is a standard family of $2g+r-1$ curves $\gamma_i$, oriented counterclockwise, which are in one-to-one correspondence with the \textit{bricks}, i.e. the innermost rectangles, of the brick diagram of $\beta$ and form a basis of the first homology of $\Sigma_\beta$. See Figure~\ref{fibre surface} for an example of $\Sigma_\beta$ with the corresponding curves for $\beta = \sigma_3\sigma_1\sigma_2\sigma_1^2\sigma_3\sigma_2$. The intersection pattern of those standard curves can be read off directly from the brick diagram, in the so called \textit{linking graph}:

\begin{defn}
	Let $\beta$ be a positive braid word. Its \textit{linking graph} is a graph whose vertices are the bricks of the brick diagram of $\beta$; two vertices are connected by an edge if and only if the corresponding bricks are arranged as the two bricks of the braids $\sigma_i^3$, $\sigma_i\sigma_{i+1}\sigma_i\sigma_{i+1}$ or $\sigma_{i+1}\sigma_i\sigma_{i+1}\sigma_i$.
\end{defn}

Notice that two vertices of the linking graph are connected with an edge if and only if the corresponding curves intersect each other. Linking graphs of positive braids were studied in great detail in \cite{baader_checkerboard_2018}. Here it is worth mentioning that since positive braid links are visually prime by \cite{cromwell_positive_1993}, a positive braid link is prime if and only if the linking graph is connected. In what follows, we will say that a positive braid link is of \textit{type $A_n$} (resp. $D_n$) if it isotopic to the closure of the braid $\sigma_1^{n+1}$ (resp. $\sigma_1^{n-2}\sigma_2\sigma_1^2\sigma_2$). Those braids have as linking graph the simply laced Dynkin diagram of type $A_n$ or $D_n$.

\begin{figure}
	\centering
	\def\svgscale{0.3}
\begingroup%
  \makeatletter%
  \providecommand\color[2][]{%
    \errmessage{(Inkscape) Color is used for the text in Inkscape, but the package 'color.sty' is not loaded}%
    \renewcommand\color[2][]{}%
  }%
  \providecommand\transparent[1]{%
    \errmessage{(Inkscape) Transparency is used (non-zero) for the text in Inkscape, but the package 'transparent.sty' is not loaded}%
    \renewcommand\transparent[1]{}%
  }%
  \providecommand\rotatebox[2]{#2}%
  \newcommand*\fsize{\dimexpr\f@size pt\relax}%
  \newcommand*\lineheight[1]{\fontsize{\fsize}{#1\fsize}\selectfont}%
  \ifx\svgwidth\undefined%
    \setlength{\unitlength}{568.14783772bp}%
    \ifx\svgscale\undefined%
      \relax%
    \else%
      \setlength{\unitlength}{\unitlength * \real{\svgscale}}%
    \fi%
  \else%
    \setlength{\unitlength}{\svgwidth}%
  \fi%
  \global\let\svgwidth\undefined%
  \global\let\svgscale\undefined%
  \makeatother%
  \begin{picture}(1,0.89066)%
    \lineheight{1}%
    \setlength\tabcolsep{0pt}%
    \put(0,0){\includegraphics[width=\unitlength,page=1]{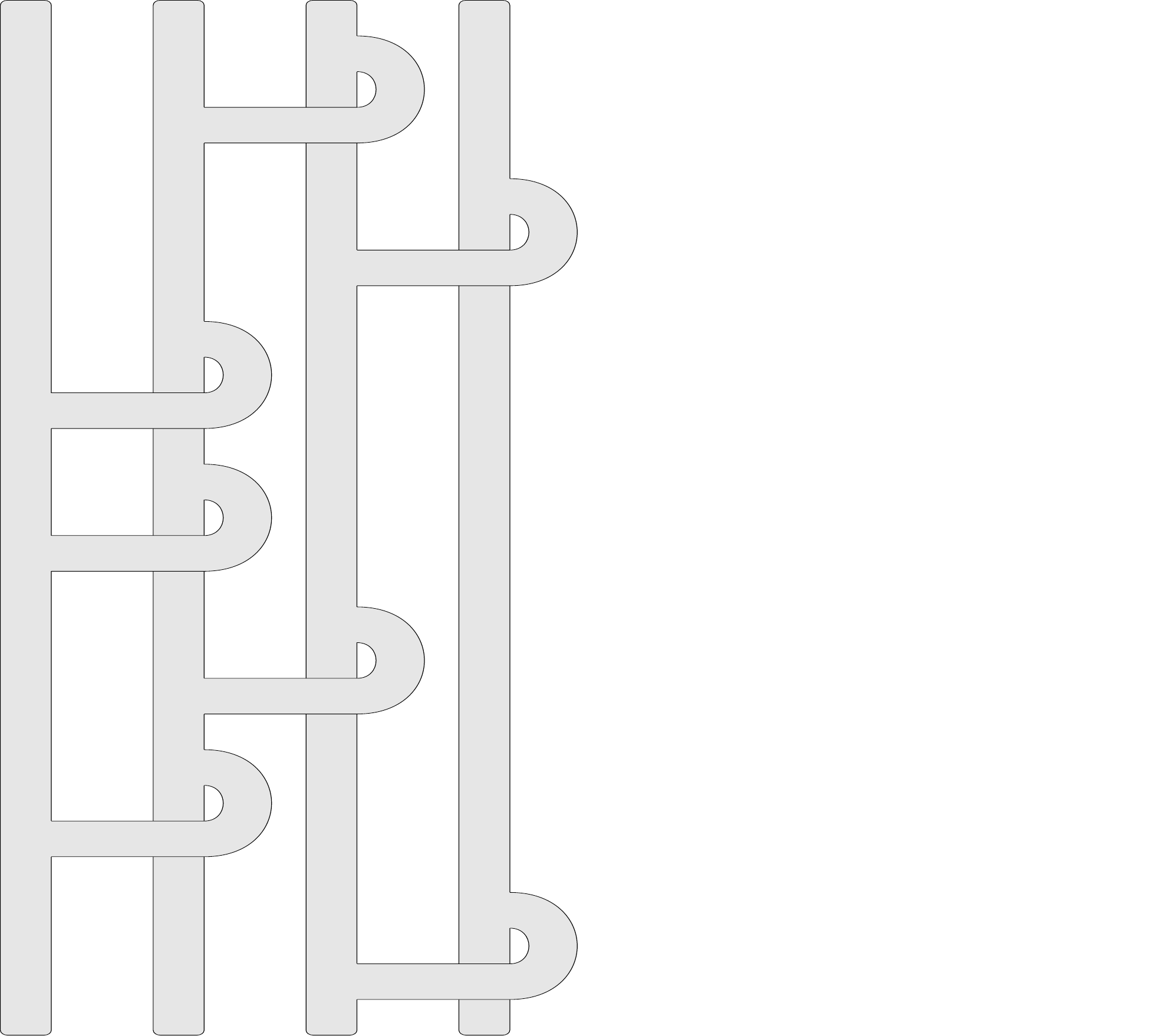}}%
    \put(0.19379029,0.33269505){\color[rgb]{0,0,1}\makebox(0,0)[lt]{\lineheight{1.25}\smash{\begin{tabular}[t]{l}$\gamma_3$\\\end{tabular}}}}%
    \put(0.32515017,0.0815163){\color[rgb]{1,0,1}\makebox(0,0)[lt]{\lineheight{1.25}\smash{\begin{tabular}[t]{l}$\gamma_4$\end{tabular}}}}%
    \put(0,0){\includegraphics[width=\unitlength,page=2]{first_example.pdf}}%
    \put(0.05899714,0.45313329){\color[rgb]{1,0,0}\makebox(0,0)[lt]{\lineheight{1.25}\smash{\begin{tabular}[t]{l}$\gamma_2$\\\end{tabular}}}}%
    \put(0.05911343,0.2008258){\color[rgb]{0,0.50196078,0}\makebox(0,0)[lt]{\lineheight{1.25}\smash{\begin{tabular}[t]{l}$\gamma_1$\\\end{tabular}}}}%
    \put(0,0){\includegraphics[width=\unitlength,page=3]{first_example.pdf}}%
  \end{picture}%
\endgroup%

	\caption{The fibre surface of $\sigma_3\sigma_1\sigma_2\sigma_1^2\sigma_3\sigma_2$, its brick diagram and the corresponding linking graph.}
	\label{fibre surface}
\end{figure}

\begin{defn}
	Let $\beta$ be a positive braid. The \textit{monodromy group} $\mathit{MG}(\beta)$ is the subgroup of the mapping class group of $\Sigma_\beta$ generated by all the Dehn twists around the curves $\gamma_i, i=1,\cdots,2g+r-1$, i.e. $$\mathit{MG}(\beta) = \langle T_{\gamma_1}, \cdots, T_{\gamma_{2g+r-1}}\rangle \leqslant \mathrm{Mod}(\Sigma_\beta).$$
\end{defn}

\begin{rem}\label{rem_plumbings}
	As we just said, if a positive braid $\beta$ is non-split, then its closure is fibred, and $\Sigma_{\beta}$ is the fibre surface. In fact, this surface can be constructed by a sequence of plumbings of positive Hopf bands, and the curves $\gamma_i$ are precisely the core curves of those Hopf bands. The monodromy group of $\beta$ therefore somehow reflects this plumbing structure.
\end{rem}

\begin{exmp}
	As already mentioned, it follows from \cite{perron_groupe_1996} that if $\beta = \sigma_1^{n+1}$ then $\mathit{MG}(\beta)$ is isomorphic to the Artin group of type $A_n$. Similarly, for $\beta = \sigma_1^{n-2}\sigma_2\sigma_1^2\sigma_2$, $\mathit{MG}(\beta)$ is isomorphic to the Artin group of type $D_n$
\end{exmp}

From the definition, it is clear that $\mathit{MG}(\beta)$ is invariant under far-commutativity (i.e. $\sigma_i\sigma_j = \sigma_j\sigma_i$ for $|i-j|\geq 2$) and positive Markov move.

\medskip 

\begin{prop}[Elementary conjugation invariance]\label{prop_elem_conj}
	Let $\beta$ be a positive braid on $N$ strands. Then for all $1\leq i\leq N-1$, $\mathit{MG}(\beta\sigma_i) \simeq \mathit{MG}(\sigma_i\beta)$.
\end{prop}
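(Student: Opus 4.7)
The plan is to realize the cyclic permutation at the level of the braid word as a geometric rotation in the solid torus in which the braid closure lives, and then to track its effect on the fibre surface and the standard family of curves. Viewing both $\beta\sigma_i$ and $\sigma_i\beta$ as braids in $D^2 \times [0,1]$, their closures $\widehat{\beta\sigma_i}$ and $\widehat{\sigma_i\beta}$ become links in the solid torus $V = D^2 \times S^1 \subset S^3$. A rotation of $V$ along its $S^1$ factor by the appropriate angle is an ambient isotopy of $S^3$ carrying $\widehat{\beta\sigma_i}$ onto $\widehat{\sigma_i\beta}$, and it restricts to a homeomorphism $h \colon \Sigma_{\beta\sigma_i} \to \Sigma_{\sigma_i\beta}$ between the two fibre surfaces.

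The key step is to verify that $h$ sends the standard family $\{\gamma_k\}$ of curves on $\Sigma_{\beta\sigma_i}$ onto the standard family $\{\gamma_k'\}$ on $\Sigma_{\sigma_i\beta}$. The cleanest way to see this uses Remark~\ref{rem_plumbings}: both fibre surfaces are built as iterated plumbings of positive Hopf bands whose core curves are exactly the standard curves. The rotation only permutes the cyclic order in which the plumbings are carried out; it does not alter the individual bands, so it sends core curves to core curves, hence $\{\gamma_k\}$ to $\{\gamma_k'\}$ up to reindexing. With this in hand, the induced isomorphism $h_* \colon \mathrm{Mod}(\Sigma_{\beta\sigma_i}) \to \mathrm{Mod}(\Sigma_{\sigma_i\beta})$ sends each generating Dehn twist $T_{\gamma_k}$ of $\mathit{MG}(\beta\sigma_i)$ to $T_{h(\gamma_k)} = T_{\gamma_{k'}'}$, a generator of $\mathit{MG}(\sigma_i\beta)$, and so restricts to the desired group isomorphism.

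The main obstacle I expect is the bookkeeping behind the preservation of the standard family. When the cycled $\sigma_i$ participates in a brick with a generator at the other end of $\beta$ (for instance when $\beta$ begins or ends with $\sigma_i$, $\sigma_{i-1}$, or $\sigma_{i+1}$), the innermost rectangles of the linear brick diagrams of $\beta\sigma_i$ and $\sigma_i\beta$ sit in visibly different positions. One must verify case by case that the geometric rotation still induces a bijection between them compatible with the Hopf plumbing structure, so that the generating Dehn twists on the two sides really correspond under $h_*$.
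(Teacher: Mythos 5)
Your setup is the same as the paper's: an isotopy (your rotation of the solid torus, the paper's sliding of the topmost band of the $i$-th column around the back of the disks) identifies $\Sigma_{\beta\sigma_i}$ with $\Sigma_{\sigma_i\beta}$, and one then tracks the standard curves. But the key claim in your second paragraph --- that this homeomorphism sends the standard family of $\Sigma_{\beta\sigma_i}$ onto the standard family of $\Sigma_{\sigma_i\beta}$ up to reindexing --- is false, and the difficulty you flag in your last paragraph is not mere bookkeeping. Concretely, if the $i$-th column of $\beta\sigma_i$ has $n+1$ bands, the cyclic move destroys the brick formed by the two lowermost bands and creates a new brick formed by the (new) two topmost ones; the rotation drags the curve $\gamma_n$ through the lowermost brick into a curve that runs through the relocated band \emph{and} the band it was originally paired with, which are no longer adjacent in the new diagram. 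This image curve is not a standard curve of $\sigma_i\beta$ (it is the red curve $\tilde{\gamma}_{n+1}$ of Figure~\ref{elem_conj}), and no curve-by-curve bijection between the two standard families ``compatible with the plumbing structure'' exists. So the isomorphism $h_*$ does \emph{not} carry the generating set of $\mathit{MG}(\beta\sigma_i)$ to the generating set of $\mathit{MG}(\sigma_i\beta)$, and your argument stops short of proving the groups coincide.

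The missing step is algebraic rather than geometric: one must show that the subgroup generated by $T_{\tilde{\gamma}_1},\dots,T_{\tilde{\gamma}_{n-1}},T_{\tilde{\gamma}_{n+1}}$ equals the one generated by $T_{\tilde{\gamma}_1},\dots,T_{\tilde{\gamma}_{n-1}},T_{\tilde{\gamma}_n}$. The paper does this by observing that $\tilde{\gamma}_{n+1} = T_{\tilde{\gamma}_{n-1}}\cdots T_{\tilde{\gamma}_1}(\tilde{\gamma}_n)$, so that $T_{\tilde{\gamma}_{n+1}} = h\, T_{\tilde{\gamma}_n}\, h^{-1}$ with $h = T_{\tilde{\gamma}_{n-1}}\cdots T_{\tilde{\gamma}_1}$ already in the group; hence each generating set lies in the group generated by the other. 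Without an identity of this kind your proof does not close, since equality of the generated subgroups must be argued even though the two generating families of curves genuinely differ.
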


\begin{proof}
	Consider the fibre surfaces of $\beta\sigma_i$ and $\sigma_i\beta$. Those surfaces are isotopic, by sliding the topmost band between the $i$-th and $(i+1)$-th disks along the back of the disks and bringing it in the lowermost position. Note that this isotopy restricts to the identity outside of the $i_{th}$-column. The surfaces $\Sigma_{\beta\sigma_i}$ and $\Sigma_{\sigma_i\beta}$ can hence be schematically represented as in Figure~\ref{elem_conj} , where we drew the $i_{th}-$column and the light grey boxes on the two sides represent the remaining parts of the surface.
	
	\begin{figure}
		\centering
		\def\svgscale{0.5}
		\import{Figures/}{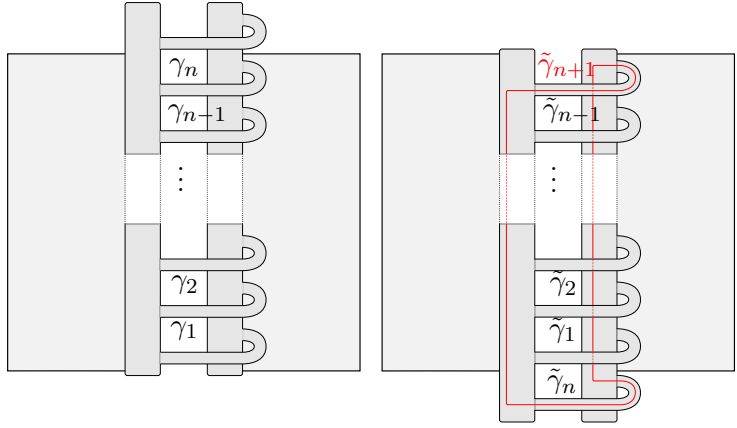}
		\caption{The isotopy between $\Sigma_{\beta\sigma_i}$ and $\Sigma_{\sigma_i\beta}$}
		\label{elem_conj}
	\end{figure}

	Let us number the standard curves of the $i_{th}$ column as in Figure~\ref{elem_conj}. The isotopy will send each $\gamma_i, i=1,\cdots,n-1$ to the corresponding $\tilde{\gamma}_i, i=1,\cdots,n-1$ and transform $\gamma_n$ into the red curve $\tilde{\gamma}_{n+1}$. All what we have to prove is then that we can generate the Dehn twists around the curves $\tilde{\gamma}_1,\cdots,\tilde{\gamma}_{n-1},\tilde{\gamma}_{n+1}$ using $\tilde{\gamma}_1,\cdots,\tilde{\gamma}_{n-1},\tilde{\gamma}_n$, and vice-versa. But we note that $$\tilde{\gamma}_{n+1} = T_{\tilde{\gamma}_{n-1}}\cdots T_{\tilde{\gamma}_2} T_{\tilde{\gamma}_1} (\tilde{\gamma}_n),$$ so that for $h = T_{\tilde{\gamma}_{n-1}}\cdots T_{\tilde{\gamma}_2} T_{\tilde{\gamma}_1}$ we have $$T_{\tilde{\gamma}_{n+1}}= h T_{\tilde{\gamma}_n} h^{-1}$$ and the result is proved.

\end{proof}

\medskip

\begin{prop}[Braid relation invariance]\label{prop_braid_rel_invariance}
	Let $\alpha$ and $\beta$ be two positive braids related by a braid relation, then $\mathit{MG}(\alpha) \simeq \mathit{MG}(\beta)$.
\end{prop}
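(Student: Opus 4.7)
The approach mirrors that of Proposition~\ref{prop_elem_conj}: we exhibit a local isotopy between the fibre surfaces, identify almost all standard curves under it, and show that the few ``changed'' ones are recoverable from the others via Dehn twist identities. Writing $\alpha = L\cdot\sigma_i\sigma_{i+1}\sigma_i\cdot R$ and $\beta = L\cdot\sigma_{i+1}\sigma_i\sigma_{i+1}\cdot R$, I would first invoke Proposition~\ref{prop_elem_conj} repeatedly to cycle the braid and reduce to the case $R = \emptyset$, so that the local region sits at the bottom of the word and only its interaction with $L$ needs to be analyzed.

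Next I would exhibit a local isotopy $\Phi:\Sigma_\alpha \to \Sigma_\beta$, supported in a neighborhood $N$ of the three bottom bands, realizing the standard ``flip'' of the three plumbed Hopf bands corresponding to the braid relation. Outside $N$, $\Phi$ is the identity, so any standard curve whose brick lies entirely outside the local region is sent to the corresponding standard curve of $\beta$ and contributes an identical generator on both sides. The curves that $\Phi$ genuinely affects are the internal curve encircling the local brick on each side (call them $\gamma_B^\alpha$ around the two $\sigma_i$'s in $\alpha$ and $\gamma_{B'}^\beta$ around the two $\sigma_{i+1}$'s in $\beta$), together with at most two ``boundary'' curves on each side, each joining a local band to the nearest same-colored letter occurring in $L$.

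The heart of the argument is a case analysis on the bottom letters of $L$. For the internal curves, the local three-band surface is small enough that $\Phi(\gamma_B^\alpha)$ agrees with $\gamma_{B'}^\beta$ up to the action of standard Dehn twists (in particular, when $L$ is empty the local region is a Hopf-link annulus and the two curves coincide). For each boundary curve $\gamma$, I would identify $\Phi(\gamma)$ explicitly and write it as $h(\gamma')$, where $\gamma'$ is the corresponding standard boundary curve of $\beta$ and $h$ is a product of Dehn twists around standard curves of $\Sigma_\beta$. The change-of-coordinates principle then gives $\Phi_* T_\gamma = h\, T_{\gamma'}\, h^{-1} \in \mathit{MG}(\beta)$, and the symmetric argument yields the reverse inclusion. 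The required identities follow from the braid relation for Dehn twists along once-intersecting curves, together with the fact that on the local surface the natural curves pairwise intersect minimally. The main obstacle is the case analysis itself: three bands are rearranged simultaneously rather than one, and boundary curves of both colors (associated to $\sigma_i$ and to $\sigma_{i+1}$) must be tracked concurrently, so the bookkeeping is noticeably heavier than in the single-band slide of Proposition~\ref{prop_elem_conj}, even though each individual configuration is handled by an elementary Dehn twist computation.
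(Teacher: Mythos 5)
Your plan follows essentially the same route as the paper's proof: reduce via elementary conjugation to the case where the braid relation occurs at the bottom of the word, realize the relation by a local isotopy of the fibre surfaces that fixes all standard curves away from the slid bands, and express the images of the few affected curves as Dehn twist images of standard curves so that the corresponding generators are conjugate in $\mathit{MG}(\beta)$. The paper's case analysis is in fact lighter than you anticipate --- only the presence or absence of a generator $\sigma_i$ in the prefix matters, and at most two curves are modified, one sent to a standard curve and the other to a single Dehn twist image of one --- but the strategy and the conjugation argument are identical.
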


\medskip

\begin{proof}
	Up to elementary conjugation, we can suppose that $\alpha = \omega \sigma_i\sigma_{i+1}\sigma_i$ and $\beta = \omega\sigma_{i+1}\sigma_i\sigma_{i+1}$, where $\omega$ is a positive braid on $N$ strands and $1\leq i\leq N-2$. At the level of surfaces $\Sigma_\alpha$ and $\Sigma_\beta$ the braid relation can be realized by an isotopy as in Figure~\ref{braid_rel}. It is clear that all the standard curves $\gamma_i$ are fixed by this isotopy but the ones (at most two) passing through the slidden band.
		
	\begin{figure}
		\centering
		\def\svgwidth{\columnwidth}
\begingroup%
  \makeatletter%
  \providecommand\color[2][]{%
    \errmessage{(Inkscape) Color is used for the text in Inkscape, but the package 'color.sty' is not loaded}%
    \renewcommand\color[2][]{}%
  }%
  \providecommand\transparent[1]{%
    \errmessage{(Inkscape) Transparency is used (non-zero) for the text in Inkscape, but the package 'transparent.sty' is not loaded}%
    \renewcommand\transparent[1]{}%
  }%
  \providecommand\rotatebox[2]{#2}%
  \newcommand*\fsize{\dimexpr\f@size pt\relax}%
  \newcommand*\lineheight[1]{\fontsize{\fsize}{#1\fsize}\selectfont}%
  \ifx\svgwidth\undefined%
    \setlength{\unitlength}{795.87858906bp}%
    \ifx\svgscale\undefined%
      \relax%
    \else%
      \setlength{\unitlength}{\unitlength * \real{\svgscale}}%
    \fi%
  \else%
    \setlength{\unitlength}{\svgwidth}%
  \fi%
  \global\let\svgwidth\undefined%
  \global\let\svgscale\undefined%
  \makeatother%
  \begin{picture}(1,0.2981536)%
    \lineheight{1}%
    \setlength\tabcolsep{0pt}%
    \put(0,0){\includegraphics[width=\unitlength,page=1]{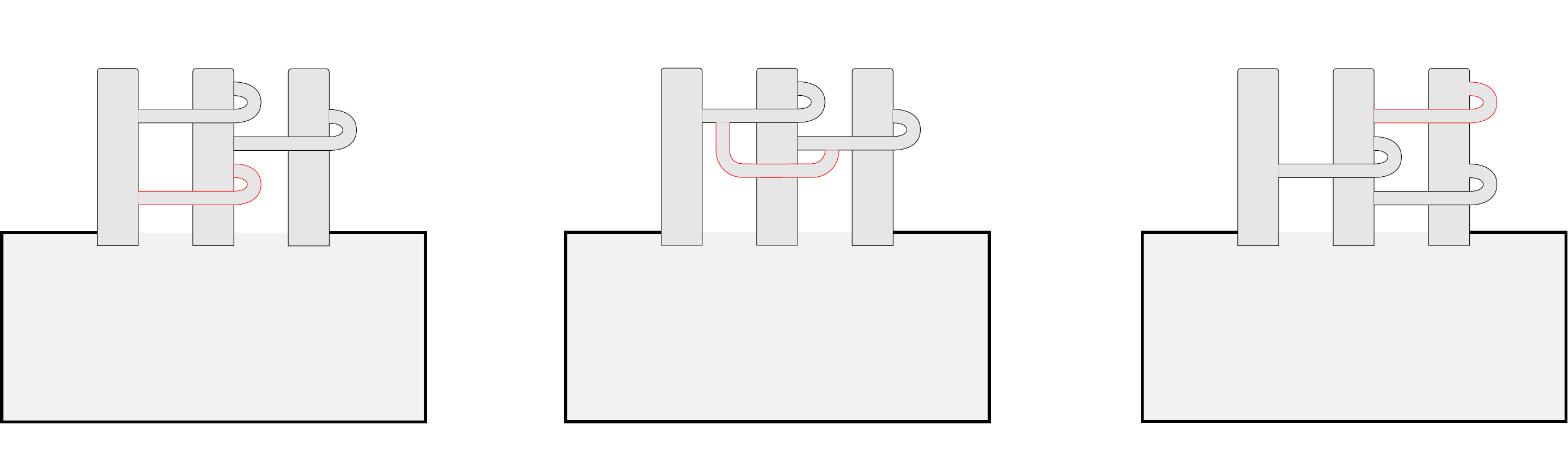}}%
    \put(0.12149063,0.00177794){\makebox(0,0)[lt]{\lineheight{1.25}\smash{\begin{tabular}[t]{l}$\Sigma_\alpha$\end{tabular}}}}%
    \put(0.85237517,0.00177794){\makebox(0,0)[lt]{\lineheight{1.25}\smash{\begin{tabular}[t]{l}$\Sigma_\beta$\end{tabular}}}}%
    \put(0.12463067,0.08627654){\makebox(0,0)[lt]{\lineheight{1.25}\smash{\begin{tabular}[t]{l}$\omega$\end{tabular}}}}%
    \put(0.4881193,0.08469535){\makebox(0,0)[lt]{\lineheight{1.25}\smash{\begin{tabular}[t]{l}$\omega$\end{tabular}}}}%
    \put(0.85470385,0.08619701){\makebox(0,0)[lt]{\lineheight{1.25}\smash{\begin{tabular}[t]{l}$\omega$\end{tabular}}}}%
  \end{picture}%
\endgroup%

		\caption{The isotopy between $\Sigma_{\alpha}$ and $\Sigma_{\beta}$}
		\label{braid_rel}
	\end{figure}

	\begin{itemize}
		\item There is a generator $\sigma_i$ in $\omega$: in this case, there are two curves on $\Sigma_{\alpha}$ which are modified by the isotopy. Let us call them $\gamma_1$ and $\gamma_2$, as in Figure~\ref{braid_rel_curve}. We see that, after the isotopy, $\gamma_1$ is transformed into the corresponding $\tilde{\gamma}_1$, while $\gamma_2$ becomes $T_{\tilde{\gamma}_1}^{-1}(\tilde{\gamma}_2)$. All the other standard curves are fixed. Therefore, we get that $\mathit{MG}(\alpha) \simeq \mathit{MG}(\beta)$.
		
		\item There is no $\sigma_i$ in $\omega$: in this case, the only curve modified by the isotopy is $\gamma_1$, which as before is transformed into $\tilde{\gamma}_1$. Again, we directly have that $\mathit{MG}(\alpha) \simeq \mathit{MG}(\beta)$.
		
		\begin{figure}
			\centering
			\def\svgwidth{\columnwidth}
			\import{Figures/}{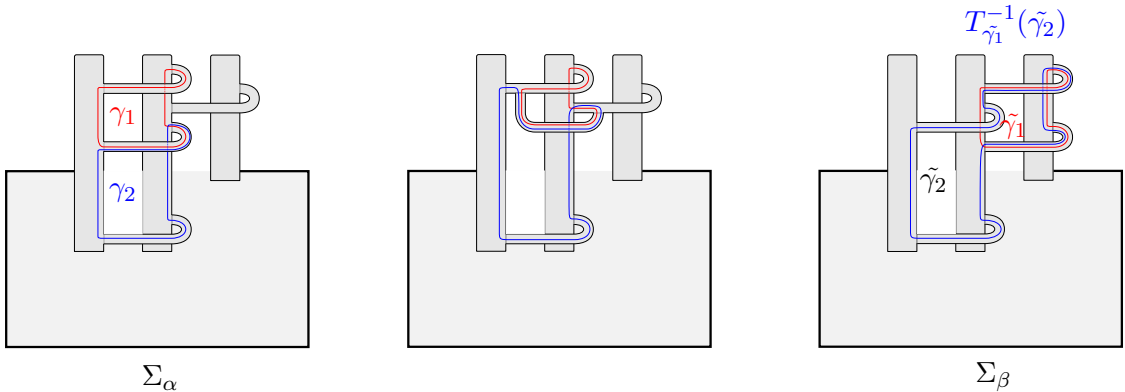}
			\caption{First case of braid relation invariance}
			\label{braid_rel_curve}
		\end{figure}
		
	\end{itemize}
	
\end{proof}

The following corollary now follows directly by an observation of Orevkov about Garside's solution of the conjugacy problem in the braid group, saying that, in the presence of a positive half twist, two conjugate positive braids can be related by a sequence of braid relations and elementary conjugations, see \cite[Section 6]{baader_secondary_2021}.

\begin{cor}\label{cor_conj_invariance}
	Let $\alpha$ and $\beta$ be positive braids such that the closures are braid isotopic and contain a positive half twist, then $\mathit{MG}(\alpha) \simeq \mathit{MG}(\beta)$.
\end{cor}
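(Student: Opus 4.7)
The plan is to reduce the statement, via a classical Garside-theoretic observation, to the invariance properties already established by Propositions~\ref{prop_elem_conj} and~\ref{prop_braid_rel_invariance} together with the remark on far-commutativity made just before Proposition~\ref{prop_elem_conj}. Since all three moves (far commutation, braid relation, elementary conjugation) preserve $\mathit{MG}$ up to isomorphism, it will be enough to connect $\alpha$ and $\beta$ by a finite chain of such moves.

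First, I would translate the geometric hypothesis into algebra: if $\hat\alpha$ and $\hat\beta$ are braid isotopic, then $\alpha$ and $\beta$ represent conjugate elements of a common braid group $B_N$. Next, I would invoke Orevkov's observation (the key input, recalled in Section~6 of \cite{baader_secondary_2021}): using Garside's solution of the conjugacy problem, two positive braids that are conjugate in $B_N$ and each contain a positive half twist $\Delta_N$ as a subword can be transformed into each other by a finite sequence of (i) positive braid relations $\sigma_i\sigma_{i+1}\sigma_i = \sigma_{i+1}\sigma_i\sigma_{i+1}$, (ii) far commutations $\sigma_i\sigma_j = \sigma_j\sigma_i$ for $|i-j|\geq 2$, and (iii) elementary conjugations $w\sigma_i \leftrightarrow \sigma_i w$. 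Our hypothesis that both closures contain a positive half twist is precisely what is needed to activate this result.

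Finally, I would concatenate the corresponding isomorphisms of monodromy groups: each far commutation leaves the brick diagram unchanged (so $\mathit{MG}$ is literally the same group), each elementary conjugation yields an isomorphism by Proposition~\ref{prop_elem_conj}, and each braid relation yields an isomorphism by Proposition~\ref{prop_braid_rel_invariance}. Composing these finitely many isomorphisms along the Orevkov sequence delivers $\mathit{MG}(\alpha)\simeq \mathit{MG}(\beta)$.

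The only nontrivial content is Orevkov's observation itself; here it is treated as a black box, and the derivation of the corollary from it is essentially a bookkeeping exercise. The mild subtlety to keep in mind is that we should apply the preceding propositions only to braids that arise along the Orevkov sequence, which is fine since all intermediate words are positive by construction.
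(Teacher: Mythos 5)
Your proposal is correct and follows exactly the paper's argument: the corollary is deduced by citing Orevkov's observation (via Section~6 of \cite{baader_secondary_2021}) that conjugate positive braids containing a positive half twist are connected by a sequence of braid relations and elementary conjugations, and then composing the isomorphisms from Propositions~\ref{prop_elem_conj} and~\ref{prop_braid_rel_invariance}. No discrepancies to report.
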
 

\section{Divides and monodromy of singularities}\label{section_singularities}

The monodromy group of a positive braid is a generalization of the geometric monodromy group of an isolated plane curve singularity. In this section, we will make this statement more precise.

Let $f: \mathbb{C}^2 \rightarrow \mathbb{C}$ define an isolated plane curve singularity. For a suitably small radius $r > 0$, the sphere $\partial (B^4_r) \subset \mathbb{C}^2$ intersects the singular curve $C = f^{-1}(0)$ transversally, so that the intersection $L(f) = C \cap \partial (B^4_r)$ is a link in $S^3 = \partial (B^4_r)$, called the link of the singularity. It is well known that the isotopy type of $L(f)$ completely determines the topological type of the singularity. Moreover, in \cite{milnor_singular_1968} Milnor proved that the map $\frac{f}{|f|}: S^3\setminus L(f) \rightarrow S^1$ is a fibration. Singularity links are therefore fibred links, with fibre a surface $\Sigma(f)$ called the \textit{Milnor fibre}. It turns out that all the singularity links are iterated torus links, and in particular positive braid links. The fibration induces a monodromy diffeomorphism of the fibre, which is only defined up to isotopy and therefore defines a mapping class in $\mathrm{Mod}(\Sigma(f))$, called the \textit{geometric monodromy} of the singularity. The geometric monodromy is an important invariant, which determines the topology of the singularity and has been intensively studied in the context of singularity theory.

By the study of the deformations of the singularity, the geometric monodromy can be "promoted" to the so called \textit{geometric monodromy group} of the singularity. It is a subgroup of $\mathrm{Mod}(\Sigma(f))$ generated by the Dehn twists around some specific curves on the Milnor fibre, called \textit{vanishing cycles}. The geometric monodromy can be expressed as a product of those generators and is therefore an element of the geometric monodromy group. We will not discuss the original definition of the geometric monodromy group of a singularity since, although classic, it would require quite some background knowledge in singularity theory and will not be useful for our purposes. However, there exists an easy combinatorial model for the Milnor fiber of a singularity which allows us to directly define the geometric monodromy group in terms of explicit generators. This was constructed by A'Campo using the theory of divides.

\begin{defn}
	A \textit{divide} $\mathcal{D}$ is a generic relative immersion of finitely many intervals in the unit disk $(D^2,\partial(D^2))$.
\end{defn}

Here, \textit{generic} means that the only singularities are double points and that the intervals meet the boundary $\partial(D^2)$ transversally. Examples of divides can be seen in Figures~\ref{figure_divide_surface} and~\ref{figure_ordered_morse}.

Divides were first introduced by A'Campo (\cite{acampo_sur_1973}\cite{acampo_groupe_1975}) and Gusein-Zade (\cite{gusein-zade_intersection_1974},\cite{gusein-zade_dynkin_1974}), who independently proved that they could be associated in a natural way to singularities and used them for studying properties of the monodromy. Later on, in \cite{acampo_real_1999},\cite{acampo_generic_1998} A'Campo associated to any divide $\mathcal{D}$ a link $L(\mathcal{D})$, constructed as follows. Consider the tangent bundle of the unit disk, $TD^2 = \{(x,v) \mid x\in D^2, v\in T_xD^2\}$. The sphere $S^3$ can be seen as the unit sphere in $TD^2$, 
$$S^3 = \{(x,v) \in TD^2 \mid |x|^2 + |v|^2 = 1\}.$$
Now let $\mathcal{D} \subset D^2$ be a divide, the link of $\mathcal{D}$ is defined as
$$ L(\mathcal{D}) = \{(x,v) \in S^3 \mid x\in \mathcal{D}, v\in T_x\mathcal{D}\} \subset S^3.$$

This gives a link whose number of components is equal to the number of intervals in the divide. In the same papers, A'Campo proved that if the divide is connected the link is fibred and that if the divide was obtained from a singularity the associated link $L(\mathcal{D})$ is ambient isotopic to the link of the singularity. In this latter case, he also provided an easy graphical algorithm to construct a model of the Milnor fibre on which a system of vanishing cycles is visible. We say that a \textit{face} of a divide $\mathcal{D}$ is a connected component of $D^2\setminus \mathcal{D}$ which does not intersect the boundary of $D^2$. Let $n$ be the number of intervals in $\mathcal{D}$, $\delta$ be the number of crossings and $r$ the number of faces. The Milnor fibre will be a surface with first Betti number $\mu = \delta + r$ and $n$ boundary components. The distinguished vanishing cycles will be given by one curve per crossing and one curve per face. The surface is constructed as follows: first, replace every crossing of $\mathcal{D}$ with a small circle, to get a trivalent graph. Now, realize every edge of this new graph by a half-twisted band. This will give a surface composed of twisted cylinders, corresponding to the crossings of $\mathcal{D}$, connected by half-twisted bands corresponding to the edges of $\mathcal{D}$. The vanishing cycle associated to a crossing will be given by the core curve of the corresponding cylinder, the vanishing cycle of a face will be given by the core curves of the bands bounding the face. An example of this construction is given in Figure~\ref{figure_divide_surface}. 

\begin{figure}
	\centering
	\def\svgwidth{.6\columnwidth}
	\import{Figures/}{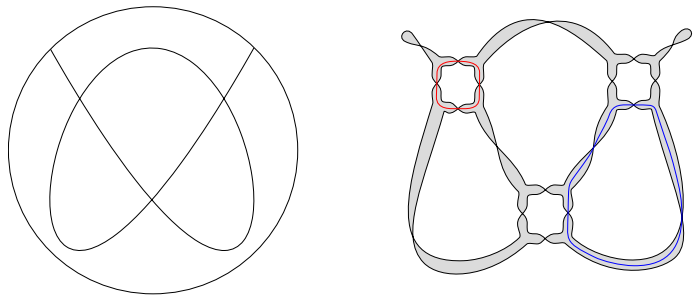}
	\caption{A divide and the associated surface with some of the vanishing cycles. The corresponding link is the torus knot $T_{3,4}$.}
	\label{figure_divide_surface}
\end{figure}

\begin{rem}
A'Campo's construction only leads to a combinatorial model of the Milnor fibre which is not embedded. A graphical procedure to construct a diagram of the link of a divide and the associated embedded fibre surface has been given by Hirasawa in \cite{hirasawa_visualization_2002}.
\end{rem}

\begin{defn}
	Let $f$ be an isolated plane curve singularity, $\mathcal{D}$ a divide associated to $f$ and $\Sigma(f)$ the surface constructed from $\mathcal{D}$ with the previous procedure. The \textit{geometric monodromy group} of $f$ is the subgroup of $\mathrm{Mod}(\Sigma(f))$ generated by the Dehn twists around the vanishing cycles constructed on $\Sigma(f)$. This does not depend on the choice of the divide $\mathcal{D}$.
\end{defn}

As we have already mentioned, links of singularities are closures of positive braids. Since fibre surfaces of fibred links are unique, the Milnor fibre of a singularity $f$ is ambient isotopic to the fibre surface $\Sigma_{\beta}$ of any positive braid $\beta$ representing $L(f)$. We therefore now have two a priori distinct subgroups of $\mathrm{Mod}(\Sigma_{\beta}) = \mathrm{Mod}(\Sigma(f))$, the geometric monodromy group of $f$ and the monodromy group of $\beta$. Theorem~\ref{theorem_group_singularity} says that those two groups coincide for at least one choice of $\beta$.

To prove Theorem~\ref{theorem_group_singularity}, we will explicitly find an isotopy between the Milnor fibre constructed from a divide and the surface of an appropriate positive braid and identify the vanishing cycles on this braid surface. In order to do so, we need to use a divide from which the positive braid is somehow visible.

\begin{defn}
	A divide $\mathcal{D} \subset D^2$ is an \textit{ordered Morse divide} if there is a diameter of $D^2$ such that the orthogonal projection on this diameter is Morse when restricted to $\mathcal{D}$, all the local maxima (resp. minima) have the same critical value $b$ (resp. $a$) with $b>a$ and all the crossings are mapped in the open interval $(a,b)$.
\end{defn}

Basically, a divide is ordered Morse (w.r.t. a given direction) if no local maxima or minima lie in an interior face of the divide. Examples of such divides are given in Figure~\ref{figure_ordered_morse}.

\begin{figure}
	\centering
	\def\svgwidth{0.5\columnwidth}
	\import{Figures/}{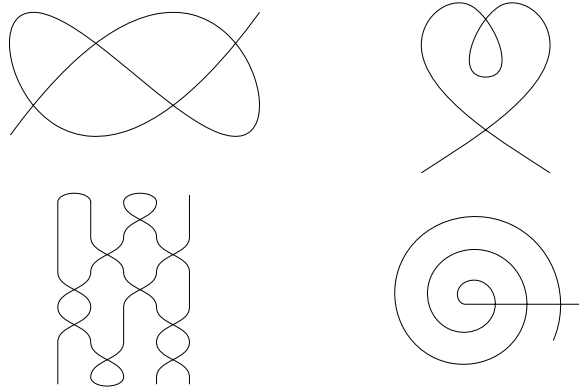}
	\caption{The divides on the left are ordered Morse, the divides on the right are not.}
	\label{figure_ordered_morse}
\end{figure}

\begin{rem}
	In the literature, ordered Morse divides are sometimes called \textit{scannable divides}.
\end{rem}

Ordered Morse divides were introduced by Couture and Perron \cite{couture_representative_2000}, who used a generalization of those to construct a representative braid for any divide link. In particular, ordered Morse divides give positive braid links. Notice that every singularity has an associated divide which is ordered Morse (in fact, the divides originally constructed by A'Campo and Gusein-Zade are ordered Morse, see \cite{couture_representative_2000}). The result of Couture and Perron can be obtained geometrically: if we apply the algorithm of \cite{hirasawa_visualization_2002} to an ordered Morse divide, we get exactly the fibre surface of a positive braid. This was done in \cite{goda_lissajous_2002} for Lissajous divides and torus links, but the same procedure works for an arbitrary ordered Morse divide. The construction of the fibre surface is shown in Figure~\ref{figure_divide_embedded}: one just has to replace the crossings and minima/maxima of the divide with the corresponding pieces of surface and glue them together following the pattern of the divide. Here we use that all the minima and maxima of the divide are in the exterior face: for general divides the fibre surface is more complicated.

\begin{rem}
	The diagrams in Figure~\ref{figure_divide_embedded} are the mirror image of those obtained by Hirasawa in \cite{hirasawa_visualization_2002}. This is due to the different choice of orientation of $S^3$: Hirasawa uses the orientation induced by the trivialization $T\mathbb{R}^2 = \{(x,v) \mid x\in \mathbb{R}^2 , v \in T_x\mathbb{R}^2\} \cong \mathbb{R}^2 \times \mathbb{R}^2 $; we use the identification $T\mathbb{R}^2 \cong \mathbb{C}^2$, where the plane $\mathbb{R}^2$ is identified with the real part of $\mathbb{C}^2$, since this allows to correctly identify the link of a singularity with the link of a corresponding divide. 
\end{rem}

\begin{figure}
	\centering
	\def\svgwidth{.95\columnwidth}
	\import{Figures/}{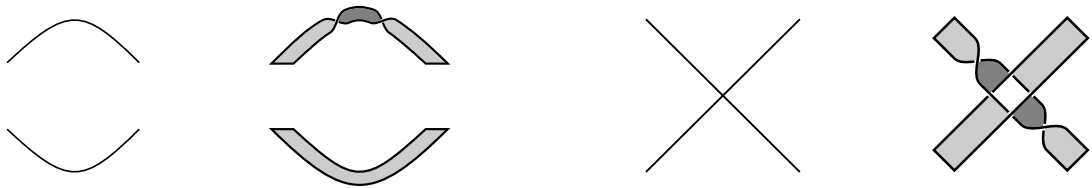}
	\caption{Hirasawa's construction of the embedded fibre surface of an ordered Morse divide.}
	\label{figure_divide_embedded}
\end{figure}

\begin{proof}[Proof of Theorem~\ref{theorem_group_singularity}]
	Let $f$ be an isolated plane singularity and $\mathcal{D}$ an associated ordered Morse divide. Let $\Sigma$ be the embedded surface constructed following \cite{hirasawa_visualization_2002}, as explained above. It is an embedded fibre surface whose boundary is the link $L(\mathcal{D}) = L(f)$. To see that this is indeed the fibre surface of a positive braid, we just need to perform the isotopies shown in Figure~\ref{figure_isotopy_divide_braid} (2a), getting a collection of disks connected by half-twisted bands, and slide all the bands to the front. Let us remark that an ordered Morse divide is formed of $N$ parallel lines (where $N$ is the number of points in the preimage of a regular value of the Morse projection) connected by the crossings and the minima/maxima. The braid obtained will have $N$ strands, a crossing of $\mathcal{D}$ gives a pair of generators while every maximum/minimum gives one generator.
	
	By further performing the isotopies of Figure~\ref{figure_isotopy_divide_braid}, (2b) around all the crossings of $\mathcal{D}$ corresponding to generators $\sigma_i$ for even $i$, we can now directly identify $\Sigma$ with an embedded version of A'Campo's model of the Milnor fibre. A system of vanishing cycles is therefore visible on the braid surface $\Sigma$. Those cycles are not exactly the same as the generators of the monodromy group of the braid, but the same arguments as in the proof of Proposition~\ref{prop_elem_conj} show that the two groups are indeed the same.
\end{proof}

\begin{figure}
	\centering
	\def\svgwidth{.85\columnwidth}
\begingroup%
  \makeatletter%
  \providecommand\color[2][]{%
    \errmessage{(Inkscape) Color is used for the text in Inkscape, but the package 'color.sty' is not loaded}%
    \renewcommand\color[2][]{}%
  }%
  \providecommand\transparent[1]{%
    \errmessage{(Inkscape) Transparency is used (non-zero) for the text in Inkscape, but the package 'transparent.sty' is not loaded}%
    \renewcommand\transparent[1]{}%
  }%
  \providecommand\rotatebox[2]{#2}%
  \newcommand*\fsize{\dimexpr\f@size pt\relax}%
  \newcommand*\lineheight[1]{\fontsize{\fsize}{#1\fsize}\selectfont}%
  \ifx\svgwidth\undefined%
    \setlength{\unitlength}{91.05083796bp}%
    \ifx\svgscale\undefined%
      \relax%
    \else%
      \setlength{\unitlength}{\unitlength * \real{\svgscale}}%
    \fi%
  \else%
    \setlength{\unitlength}{\svgwidth}%
  \fi%
  \global\let\svgwidth\undefined%
  \global\let\svgscale\undefined%
  \makeatother%
  \begin{picture}(1,0.25217375)%
    \lineheight{1}%
    \setlength\tabcolsep{0pt}%
    \put(0,0){\includegraphics[width=\unitlength,page=1]{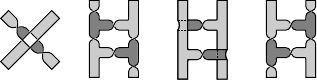}}%
    \put(0.20547107,0.12608688){\makebox(0,0)[lt]{\lineheight{1.25}\smash{\begin{tabular}[t]{l}$\underset{(1)}{\simeq}$\end{tabular}}}}%
    \put(0.48566414,0.12608688){\makebox(0,0)[lt]{\lineheight{1.25}\smash{\begin{tabular}[t]{l}$\underset{(2a)}{\simeq}$\end{tabular}}}}%
    \put(0.76585718,0.12608688){\makebox(0,0)[lt]{\lineheight{1.25}\smash{\begin{tabular}[t]{l}$\underset{(2b)}{\simeq}$\end{tabular}}}}%
  \end{picture}%
\endgroup%

	\caption{A sequence of isotopies.}
	\label{figure_isotopy_divide_braid}
\end{figure}

\begin{exmp}	
In Figure~\ref{figure_isotopy_example}, we see an example of the isotopies used in the previous proof. On the left, we start with a divide $\mathcal{D}$; we then construct the Seifert surface following Hirasawa's algorithm. After applying the isotopies of Figure~\ref{figure_isotopy_divide_braid} (2a), we obtain the surface $\Sigma_{\beta}$ of a positive braid, namely $\beta=(\sigma_1\sigma_2\sigma_3)^3$. On the right, we performed the isotopy of Figure~\ref{figure_isotopy_divide_braid} (2b) around the central crossing of $\mathcal{D}$. In that way, we clearly see that the surface is composed of twisted cylinders corresponding to the crossings of $\mathcal{D}$ and connected by \textit{half-twisted} bands, as required by A'Campo's construction (compare with Figure~\ref{figure_divide_surface}). Notice that it is not relevant that this last step is performed around all the crossings of $\mathcal{D}$ corresponding to generators $\sigma_i$ for \textit{even} $i$ as opposed to \textit{odd} $i$; what matters is that it alternates, in order the get the required half-twisting of the bands become visible. 

\begin{figure}
	\centering
	\def\svgwidth{.95\columnwidth}
\begingroup%
  \makeatletter%
  \providecommand\color[2][]{%
    \errmessage{(Inkscape) Color is used for the text in Inkscape, but the package 'color.sty' is not loaded}%
    \renewcommand\color[2][]{}%
  }%
  \providecommand\transparent[1]{%
    \errmessage{(Inkscape) Transparency is used (non-zero) for the text in Inkscape, but the package 'transparent.sty' is not loaded}%
    \renewcommand\transparent[1]{}%
  }%
  \providecommand\rotatebox[2]{#2}%
  \newcommand*\fsize{\dimexpr\f@size pt\relax}%
  \newcommand*\lineheight[1]{\fontsize{\fsize}{#1\fsize}\selectfont}%
  \ifx\svgwidth\undefined%
    \setlength{\unitlength}{164.7039819bp}%
    \ifx\svgscale\undefined%
      \relax%
    \else%
      \setlength{\unitlength}{\unitlength * \real{\svgscale}}%
    \fi%
  \else%
    \setlength{\unitlength}{\svgwidth}%
  \fi%
  \global\let\svgwidth\undefined%
  \global\let\svgscale\undefined%
  \makeatother%
  \begin{picture}(1,0.2779894)%
    \lineheight{1}%
    \setlength\tabcolsep{0pt}%
    \put(0,0){\includegraphics[width=\unitlength,page=1]{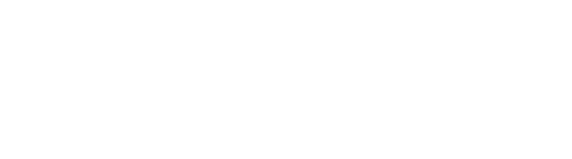}}%
    \put(3.22745149,0.09698234){\makebox(0,0)[lt]{\lineheight{1.25}\smash{\begin{tabular}[t]{l}$\underset{(1)}{\simeq}$\end{tabular}}}}%
    \put(3.38234642,0.09698234){\makebox(0,0)[lt]{\lineheight{1.25}\smash{\begin{tabular}[t]{l}$\underset{(2a)}{\simeq}$\end{tabular}}}}%
    \put(3.53724108,0.09698234){\makebox(0,0)[lt]{\lineheight{1.25}\smash{\begin{tabular}[t]{l}$\underset{(2b)}{\simeq}$\end{tabular}}}}%
    \put(0,0){\includegraphics[width=\unitlength,page=2]{example_surface_divide.pdf}}%
  \end{picture}%
\endgroup%

	\caption{An example of the isotopies of Theorem~\ref{theorem_group_singularity}.}
	\label{figure_isotopy_example}
\end{figure}

\end{exmp}

\section{Framings}\label{section_framings}

We will now briefly recall the basics of the theory of framed surfaces, concentrating in particular on the action of the mapping class group on such structures, as investigated in \cite{calderon_framed_2020} and \cite{randal-williams_homology_2014}. In what follows, we will adhere to the notations and conventions of \cite{calderon_framed_2020}, but we will restrict only to the case of surfaces with connected boundary. Let $\Sigma = \Sigma_{g,1}$ be a connected, compact, oriented surface of genus $g$ with one boundary component. A framing $\phi$ on $\Sigma$ is a trivialization of the tangent bundle $T\Sigma$. With the fixed orientation (and a choice of a Riemannian metric), a framing is determined by a nowhere-vanishing vector field $\xi_\phi$ on $\Sigma$. Two framings are \textit{isotopic} if the associated vector fields are isotopic through nowhere-vanishing vector fields.

To a framing one can associate a \textit{winding number function}, computing the holonomy of a simple closed curve. If $c: \mathbb{S}^1 \rightarrow \Sigma$ is a $\mathcal{C}^1$ embedding, one can define $$\phi(c) = \int _{\mathbb{S}^1} d \angle (\dot{c}(t), \xi_\phi(c(t))) \in \mathbb{Z}. $$
This defines a map from the set of simple closed curves on $\Sigma$ to $\mathbb{Z}$, which is clearly invariant under isotopy of $\phi$ and $c$. It is not hard to see that the converse also holds: the isotopy class of a framing on $\Sigma$ is determined by its winding number function, and actually by the value on finitely many curves (see \cite{calderon_framed_2020} Lemma 2.2 and \cite{randal-williams_homology_2014} Prop.2.4). Thanks to this, we will use the term "framing" indifferently to refer to the isotopy class of the vector field $\xi_\phi$ or to the associated winding number function $\phi$.

\begin{rem}
	Since we are only considering surfaces with connected boundary, it follows from the Poincaré-Hopf index theorem that for any framing $\phi$ on $\Sigma$, if the boundary $\partial\Sigma$ is oriented with the surface on its left, $\phi(\partial\Sigma) = \chi(\Sigma)$.
\end{rem}

The mapping class group of $\Sigma$ acts on the set of isotopy classes of framings by pullback, via $f \cdot \phi (c) = \phi (f^{-1}(c))$, for $f \in \mathrm{Mod}(\Sigma)$ and $c$ a simple closed curve.

\begin{defn}
	Let $(\Sigma, \phi)$ be a framed surface. The \textit{framed mapping class group} $$\mathrm{Mod}(\Sigma,\phi) = \{f\in \mathrm{Mod}(\Sigma) \mid f \cdot \phi = \phi\}$$ is the stabilizer of the isotopy class of $\phi$.
\end{defn}

Of particular interest is the action of Dehn twists.

\begin{lem}[\cite{calderon_framed_2020}, Lemma $2.4$]\label{lemma twist-linearity}
	Let $(\Sigma,\phi)$ be a framed surface and $a,x$ oriented simple closed curves on $\Sigma$, then 
	$$ \phi(T_a(x)) = \phi(x) + \langle x,a \rangle \phi(a),$$
	where $\langle \cdot, \cdot \rangle$ denotes the algebraic intersection number. 
\end{lem}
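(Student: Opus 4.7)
The plan is to realise $T_a(x)$, up to isotopy, as a concatenation of sub-arcs of $x$ with parallel copies of $a^{\pm 1}$, one at each intersection point, and then to compute its winding number by adding up the contributions of the pieces. Since both sides of the formula depend only on the isotopy classes of $x$ and $a$, I would first isotope them into minimal position, so that they meet transversely at $k = i(x,a)$ points $p_1,\ldots,p_k$ with local signs $\epsilon_i \in \{\pm 1\}$ satisfying $\sum_i \epsilon_i = \langle x, a\rangle$.

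Next I would pass to an annular neighbourhood $A \cong S^1 \times [-1,1]$ of $a$ in which $T_a$ is supported. Inside $A$, the curve $x$ is a disjoint union of $k$ transverse arcs, and the standard local picture of the right-handed Dehn twist shows that each such arc is sent, rel endpoints, to the concatenation of itself with a single full loop around $a$ traversed in the direction dictated by $\epsilon_i$. Globally, $T_a(x)$ is therefore isotopic to a smoothing of the piecewise $\mathcal{C}^1$ concatenation
$$x_1 \cdot a^{\epsilon_1} \cdot x_2 \cdot a^{\epsilon_2} \cdots x_k \cdot a^{\epsilon_k},$$
where the $x_i$ are the sub-arcs of $x$ between consecutive intersection points. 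Because $\phi$ is an isotopy invariant on smooth closed curves, it is enough to compute its value on this piecewise model.

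For the computation, I would extend the definition of $\phi$ to piecewise $\mathcal{C}^1$ closed curves by adding, to the integral of $d\angle(\dot\gamma,\xi_\phi)$ along the smooth pieces, the signed angle defects at the corners. With this extension $\phi$ is additive under concatenation up to corner corrections, and the key observation is that at each insertion point the two corner corrections (entering and leaving the loop $a^{\epsilon_i}$) telescope: their sum reproduces the single angle change one would have had across a broken version of $x$ at $p_i$. The corner contributions in our model therefore collapse to those of $x$ itself, which vanish after smoothing, and we are left with
$$\phi(T_a(x)) = \phi(x) + \sum_{i=1}^{k} \phi(a^{\epsilon_i}) = \phi(x) + \sum_{i=1}^{k} \epsilon_i\, \phi(a) = \phi(x) + \langle x, a\rangle\, \phi(a).$$
The main obstacle I anticipate is careful bookkeeping of signs: checking that a positive intersection really yields an inserted $+a$ (rather than $-a$) under the right-handed twist convention, and verifying that the two angle corrections at the endpoints of each inserted loop genuinely cancel instead of contributing a spurious integer multiple. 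Both are local checks in the annular model, but they are where the real content of the lemma sits.
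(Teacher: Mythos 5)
Your argument is correct, but note that the paper does not prove this lemma at all: it is imported verbatim from Calderon--Salter (\cite{calderon_framed_2020}, Lemma 2.4), so the only comparison available is with that source, whose proof is essentially the one you give -- the annular local model of $T_a$, the insertion of an oriented push-off $a^{\epsilon_i}$ at each intersection point, and the additivity of the winding number up to corner terms. Your two flagged checks do go through: the right-handed twist inserts $a^{\epsilon_i}$ consistently with the homological formula $[T_a(x)]=[x]+\langle x,a\rangle[a]$, and the two turning angles at each insertion are exact negatives (not merely equal mod $2\pi$), since both are measured in $(-\pi,\pi)$ between $\dot x(p_i)$ and $\epsilon_i\dot a(p_i)$ with transversality ruling out the ambiguous value $\pm\pi$; this exactness is what distinguishes twist-linearity from the genuinely non-additive surgery formula $\phi(a_1+a_2)=\phi(a_1)+\phi(a_2)+1$ for curves meeting once.
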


We say that a nonseparating simple closed curve $a$ on $(\Sigma,\phi)$ is \textit{admissible} if $\phi(a) = 0$. As a consequence of Lemma~\ref{lemma twist-linearity} we have that a nonseparating simple closed curve $a \subset \Sigma$ is admissible if and only if the corresponding Dehn twist preserves $\phi$. Calderon and Salter proved that, for big enough genus, the framed mapping class group is generated by those admissible twists:
\begin{prop}[\cite{calderon_framed_2020}, Prop. $5.11$]
	If $(\Sigma, \phi)$ is a framed surface of genus $g \geq 5$,
	$$ \mathrm{Mod}(\Sigma,\phi) = \langle T_a \mid a \text{ admissible for } \phi \rangle .$$
\end{prop}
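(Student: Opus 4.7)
The plan is to split into the two inclusions and tackle the hard one by reducing to a Humphries-type generating set consisting of admissible curves.

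The inclusion $\langle T_a \mid a \text{ admissible for } \phi \rangle \subseteq \mathrm{Mod}(\Sigma,\phi)$ is immediate from Lemma~\ref{lemma twist-linearity}: if $\phi(a)=0$ then for every simple closed curve $x$ we have $\phi(T_a(x)) = \phi(x) + \langle x,a\rangle\cdot 0 = \phi(x)$, and since a framing is determined by its winding number function, $T_a\cdot\phi=\phi$. So the work is all in the other direction.

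For the reverse inclusion, my strategy would be to exhibit, for any framing $\phi$ of genus $g\geq 5$, an explicit finite collection $\{a_1,\ldots,a_N\}$ of admissible simple closed curves whose Dehn twists are already known to generate the full framed mapping class group $\mathrm{Mod}(\Sigma,\phi)$. The natural candidate is an analogue of a Humphries-type configuration: a chain of $2g$ curves forming a linear $A_{2g}$-subgraph together with one extra curve producing a $D$- or $E$-type branch, each of whose twists preserves $\phi$. Once such an "admissible Humphries system" is in hand, one can invoke a standard generation result (for example a framed analogue of the Humphries generating theorem, or a statement that the Dehn twists around a chain realizing every genus-$g$ subsurface up to the action on framings generate the stabilizer). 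The conclusion would then be that the subgroup on the left-hand side already contains a known generating set for $\mathrm{Mod}(\Sigma,\phi)$, proving equality.

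The key technical step, and the main obstacle, is the construction of the admissible Humphries system. Given an arbitrary $\phi$, the classical Humphries curves almost surely fail to be admissible. One must therefore modify them curve by curve: replace each non-admissible $c_i$ by a curve $a_i$ isotopic to some $T_{b_{i,1}}\cdots T_{b_{i,k}}(c_i)$, where the $b_{i,j}$ are chosen (using Lemma~\ref{lemma twist-linearity} as a bookkeeping device on winding numbers) so that the adjusted curve is admissible, intersects the previously adjusted curves in the prescribed pattern, and still cuts out a genus-$g$ subsurface in the appropriate way. This is where the genus assumption $g\geq 5$ is used essentially: one needs enough complementary genus to find disjoint auxiliary curves with prescribed winding numbers that realize the required adjustments, and the classification of $\mathrm{Mod}(\Sigma)$-orbits of framings (determined by the boundary value $\chi(\Sigma)$ and the Arf invariant of the associated quadratic form) combined with a change-of-coordinates argument ensures the configuration can be realized up to $\mathrm{Mod}(\Sigma,\phi)$.

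Alternatively, one could avoid the explicit combinatorial construction by an orbit-counting argument: compute $[\mathrm{Mod}(\Sigma):\mathrm{Mod}(\Sigma,\phi)]$ as the size of the orbit of $\phi$ under $\mathrm{Mod}(\Sigma)$ (which is controlled, for $g\geq 5$, by the Arf invariant and boundary value), then show that $\langle T_a \mid a \text{ admissible}\rangle$ has at least the same index in $\mathrm{Mod}(\Sigma)$ by exhibiting representatives of every other coset given by non-admissible twists. This bypasses building an admissible generating configuration but shifts the difficulty to showing the action on framings is transitive within each admissible class; either route has the same essential obstacle, namely the fine analysis of which framings are realized by admissible-twist translates of $\phi$.
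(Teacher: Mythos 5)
This proposition is quoted in the paper from \cite{calderon_framed_2020} without proof, so there is no internal argument to compare against; judging your proposal on its own terms, the easy inclusion is handled correctly (twist-linearity from Lemma~\ref{lemma twist-linearity} shows admissible twists stabilize $\phi$), but the hard direction has a genuine gap. Your main route asks to ``invoke a standard generation result (for example a framed analogue of the Humphries generating theorem)'' once an admissible Humphries-type system is constructed. No such prior result exists to invoke: the classical Humphries theorem generates all of $\mathrm{Mod}(\Sigma)$, not the stabilizer of a framing, and the statement that finitely many admissible twists in a suitable configuration generate $\mathrm{Mod}(\Sigma,\phi)$ is precisely the content of Calder\'on--Salter's Proposition 5.11 and Theorem B (quoted in the paper as Proposition~\ref{prop_finite_generation}). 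So the argument is circular: the ``key technical step'' you defer to is the theorem itself. The actual proof in \cite{calderon_framed_2020} is a substantial induction on genus using connectivity of complexes of admissible curves and subsurface stabilizers, together with input from Johnson's work relating the Torelli group to the spin/framed structure; none of that is replaced by the curve-by-curve adjustment you sketch.

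Your fallback via orbit counting also fails for a concrete reason: the set of isotopy classes of framings on $\Sigma_{g,1}$ is a torsor over $H^1(\Sigma;\mathbb{Z})\cong\mathbb{Z}^{2g}$, so the $\mathrm{Mod}(\Sigma)$-orbit of $\phi$ (all framings with the same Arf invariant and boundary value) is infinite, hence $[\mathrm{Mod}(\Sigma):\mathrm{Mod}(\Sigma,\phi)]$ is infinite. Comparing two infinite indices cannot force the inclusion $\langle T_a \mid a \text{ admissible}\rangle\leqslant\mathrm{Mod}(\Sigma,\phi)$ to be an equality; an index argument of this type only works when the indices are finite. Both routes therefore leave the essential difficulty untouched.
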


But more is true. The framed mapping class group is generated by finitely many admissible twists around curves with prescribed intersection pattern. Again following \cite{calderon_framed_2020}:

\begin{defn}
	Let $\mathcal{C} = \{c_1, \cdots, c_k\}$ be a collection of curves on a surface $\Sigma$, pairwise in minimal position and intersecting at most once. We say that such a configuration:
	\begin{itemize}
		\item \textit{spans the surface} if $\Sigma$ deformation retracts onto the union of curves in $\mathcal{C}$;
		\item is \textit{arboreal} if its intersection graph is a tree, and \textit{$E$-arboreal} if moreover it contains the Dynkin diagram $E_6$ as a subtree.
	\end{itemize}
\end{defn}

\begin{defn}
	 Let $\mathcal{C} = \{c_1, \cdots , c_k,c_{k+1}, \cdots, c_l\}$ be a collection of curves on a surface $\Sigma$ and denote by $S_j$ a regular neighbourhood of $\{c_1, \cdots , c_j\}$.
	 We say that $\mathcal{C}$ is an \textit{$h$-assemblage of type $E$} if:
	\begin{itemize}
		\item $\{c_1,\cdots , c_k\}$ is an $E$-arboreal spanning configuration on a subsurface $S \subset \Sigma$ of genus $h$;
		
		\item For $j > k$, $c_j \cap S_{j-1}$ is a single arc;
		
		\item $S_l = \Sigma$.
	\end{itemize}
\end{defn}

\begin{prop}[\cite{calderon_framed_2020}, Theorem B] \label{prop_finite_generation}
	Let $(\Sigma, \phi)$ be a framed surface and $\mathcal{C} = \{c_1, \cdots ,c_l\}$ an $h$-assemblage of type $E$ on $\Sigma$ of genus $h \geq 5$. If all the curves in $\mathcal{C}$ are admissible for $\phi$, then 
	$$ \mathrm{Mod}(\Sigma,\phi) = \langle T_c \mid c\in \mathcal{C} \rangle .$$
	
\end{prop}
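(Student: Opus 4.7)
The plan is to first establish the generation statement on the subsurface $S$ of genus $h \geq 5$ spanned by the $E$-arboreal configuration $\{c_1,\dots,c_k\}$, and then propagate it to $\Sigma$ by induction on the number of added curves $c_j$ with $j>k$. Let $G_j=\langle T_{c_1},\dots,T_{c_j}\rangle$ and let $S_j$ be as in the definition of an assemblage. The inductive target is $G_j=\mathrm{Mod}(S_j,\phi|_{S_j})$ for every $j\geq k$, with final output $G_l=\mathrm{Mod}(\Sigma,\phi)$. Throughout, admissibility of each $c_i$ guarantees $T_{c_i}\in\mathrm{Mod}(\Sigma,\phi)$ by Lemma~\ref{lemma twist-linearity}, so only the reverse inclusion needs work.

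For the base case on $S$, the Calderon--Salter generation result (Prop.\ $5.11$ in \cite{calderon_framed_2020}, quoted above) shows $\mathrm{Mod}(S,\phi|_S)$ is generated by \emph{all} admissible twists, so it suffices to realize every admissible $T_a$ as a word in the $T_{c_i}$. The approach is a change-of-coordinates argument: I would show that $G_k$ acts transitively on isotopy classes of admissible nonseparating simple closed curves of a fixed topological type, so that once a single $T_a$ is known to lie in $G_k$ one can conjugate it to any $T_{a'}$ with $a'$ of the same type. The role of the $E_6$ subtree is to provide enough room to perform such conjugations: the Dehn twists along an $E_6$ subconfiguration act on the homology of a regular neighbourhood as (a lift of) the Weyl group $W(E_6)$, and by the work of Wajnryb they generate the full mapping class group of that neighbourhood in the unframed setting. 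Combining this with the framed transitivity of $\mathrm{Mod}(S,\phi|_S)$ on admissible curves in a given topological class should give that $G_k$ contains every admissible twist, hence equals $\mathrm{Mod}(S,\phi|_S)$.

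For the inductive step, assume $G_{j-1}=\mathrm{Mod}(S_{j-1},\phi|_{S_{j-1}})$. Since $c_j\cap S_{j-1}$ is a single arc, the surface $S_j$ is obtained from $S_{j-1}$ by attaching a single band, and one has a Birman-type short exact sequence relating $\mathrm{Mod}(S_{j-1})$ and $\mathrm{Mod}(S_j)$; the kernel of the natural surjection after gluing is generated by the twist along the new curve $c_j$. Restricting this classical argument to the framing stabilizers (which is possible because $c_j$ is admissible, so $T_{c_j}\in\mathrm{Mod}(S_j,\phi|_{S_j})$) yields $G_j=\langle G_{j-1},T_{c_j}\rangle=\mathrm{Mod}(S_j,\phi|_{S_j})$. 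The main obstacle I expect is the base case: transferring Calderon--Salter's infinite generating set of admissible twists to the \emph{finite} set coming from the arboreal configuration. Making the orbit-transitivity argument precise in the framed setting, and in particular controlling how twists in the $E_6$ subtree change the winding numbers of auxiliary admissible curves, is where the technical weight of the proof would lie; the inductive stabilization step, by contrast, should be essentially formal once the base case is in hand.
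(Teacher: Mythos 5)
First, a point of reference: the paper gives no proof of this statement at all. It is quoted, with attribution, as Theorem~B of \cite{calderon_framed_2020} and used as a black box, so your attempt can only be compared with the actual argument of Calderon and Salter. Your outline does reproduce its architecture --- first handle the $E$-arboreal spanning configuration on the genus-$h$ subsurface $S$, then induct over the attached curves $c_j$ for $j>k$ --- but the two claims you invoke to make each stage work are not correct.

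For the base case, the assertion that the twists along an $E_6$ subconfiguration generate the full unframed mapping class group of a regular neighbourhood is false, and Wajnryb's work points in the opposite direction. The $E_6$ configuration spans a copy of $\Sigma_{3,1}$ (the fibre surface of $(\sigma_1\sigma_2)^4$, i.e.\ the Milnor fibre of the $E_6$ singularity), and its six curves are admissible for the framing $\phi_\beta$ of Section~\ref{section_framings}; by Lemma~\ref{lemma twist-linearity} the six twists therefore lie in the corresponding framed mapping class group, which has infinite index in $\mathrm{Mod}(\Sigma_{3,1})$. This containment is precisely why the Perron--Vannier picture breaks down beyond types $A_n$ and $D_n$. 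Without that input, your transitivity claim for $G_k$ on admissible curves of a fixed type is exactly the content of the theorem being proved, so the base case as written is circular. For the inductive step, there is no Birman-type exact sequence for attaching a band: the Birman exact sequence concerns forgetting marked points, and there is no surjection $\mathrm{Mod}(S_j)\rightarrow\mathrm{Mod}(S_{j-1})$, framed or not, whose kernel is normally generated by $T_{c_j}$ (indeed $\mathrm{Mod}(S_{j-1})$ includes into $\mathrm{Mod}(S_j)$, it does not receive a map from it). In \cite{calderon_framed_2020} the stabilization step is a substantive theorem proved via the framed change-of-coordinates principle, using $h\geq 5$ to guarantee every intermediate subsurface has large genus; it is not formal. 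So while the skeleton matches the cited proof, both supporting pillars would need to be replaced.
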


The orbit space of this action was studied by Randal-Williams in \cite{randal-williams_homology_2014}. It is classified by the Arf invariant. More precisely, it follows from work of Johnson \cite{johnson_spin_1980} that the function $(\phi + 1)$ mod $2$ is a quadratic refinement of the mod $2$ intersection form. We can therefore define $\mathcal{A}(\phi)$ to be the Arf invariant of this quadratic form. More concretely, let us denote by $\mathit{i}(\cdot,\cdot)$ the geometric intersection number and take a collection of oriented simple closed curves $\{x_1,y_1, \dots,x_g,y_g\}$ such that $\langle x_i, x_j \rangle = \langle y_i, y_j \rangle = 0$ and $\langle x_i,y_j \rangle = \mathit{i}(x_i,y_j) = \delta_{i,j}$. We then have
$$ \mathcal{A}(\phi) = \sum\limits_{i=1}^g (\phi(x_i)+1)(\phi(y_i)+1) \mod 2.$$ This is of course independent of the choice of the curves $\{x_1,y_1, \dots,x_g,y_g\}$.

\begin{prop}[\cite{randal-williams_homology_2014}, Theorem $2.9$]
	Let $g\geq 2$. The action of the mapping class group on the set of isotopy classes of framings on $\Sigma = \Sigma_{g,1}$ has exactly two orbits, distinguished by the Arf invariant.
\end{prop}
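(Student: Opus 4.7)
The plan is to split the argument into three parts: invariance of the Arf class, realization of both values, and transitivity of the $\mathrm{Mod}(\Sigma)$-action within each Arf level. Invariance is essentially formal: for $f\in\mathrm{Mod}(\Sigma)$, if $\{x_i,y_i\}_{i=1}^g$ is a geometric symplectic basis then so is $\{f^{-1}(x_i),f^{-1}(y_i)\}$, and
\[
\mathcal{A}(f\cdot\phi)=\sum_{i=1}^g\bigl(\phi(f^{-1}x_i)+1\bigr)\bigl(\phi(f^{-1}y_i)+1\bigr)\pmod 2 \;=\; \mathcal{A}(\phi),
\]
the last equality being the basis-independence noted before the proposition. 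For realization, the twist-linearity formula of Lemma~\ref{lemma twist-linearity} allows one to modify any given framing on a chosen symplectic basis by prescribed amounts; a direct evaluation of the Arf formula on two appropriately chosen framings produces one representative of each class.

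For transitivity, given $\phi_1,\phi_2$ with $\mathcal{A}(\phi_1)=\mathcal{A}(\phi_2)$, the plan is to bring both framings by an MCG element into a common canonical form depending only on the Arf invariant, so that the whole proof reduces to a normalization algorithm. Since an isotopy class of framing is determined by finitely many winding-number values on a geometric symplectic basis $\{x_i,y_i\}_{i=1}^g$, the target is to reduce the tuple $(\phi(x_1),\phi(y_1),\dots,\phi(x_g),\phi(y_g))$ to something canonical. The fundamental move is Lemma~\ref{lemma twist-linearity}: $\phi(T_a c)=\phi(c)+\langle c,a\rangle\phi(a)$. Applying this to $a=x_i,y_i$ runs a Euclidean algorithm on each pair $(\phi(x_i),\phi(y_i))$, and the hypothesis $g\ge 2$ gives enough room to construct auxiliary simple closed curves linking the $i$-th and $j$-th genus-one pieces (for $i\ne j$) with prescribed winding number, so that residues can be transferred between pairs. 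Iterating reduces every pair into $\{(0,0),(0,1),(1,0),(1,1)\}$, and further symplectic automorphisms realized in the MCG consolidate these into a single normal form per Arf class.

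The main obstacle is ruling out any finer invariant — one must show the Arf class is complete, not merely a necessary condition. The cleanest route is to reduce modulo $2$: the function $q(c)=\phi(c)+1\pmod 2$ is a quadratic refinement of the mod-$2$ intersection form, and Arf's classical theorem asserts that such refinements are classified by the Arf invariant up to the action of $\mathrm{Sp}(2g,\mathbb{F}_2)$. The $\mathrm{Mod}(\Sigma)$-action on mod-$2$ winding numbers factors through $\mathrm{Sp}(2g,\mathbb{F}_2)$, and the classical surjection $\mathrm{Mod}(\Sigma)\twoheadrightarrow\mathrm{Sp}(2g,\mathbb{Z})$ guarantees this action attains all of $\mathrm{Sp}(2g,\mathbb{F}_2)$; so the two mod-$2$ orbits are exactly the Arf classes. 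Finally, one lifts from mod $2$ to $\mathbb{Z}$ by showing that the stabilizer of a mod-$2$ framing — a subgroup of the Torelli group, generated in particular by twists about curves with even winding number — acts transitively on the fiber of reduction. Concretely, the twist formula shows that such a twist changes $\phi$ by an even multiple of a prescribed cohomology class, and the richness of the curve complex for $g\ge 2$ lets one realize every desired integral shift by composing finitely many such admissible twists. This last transitivity on the fiber is where the hypothesis $g\ge 2$ is genuinely needed.
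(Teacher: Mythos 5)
The paper does not prove this proposition at all: it is imported verbatim as Theorem~2.9 of \cite{randal-williams_homology_2014}, so there is no internal argument to compare yours against. Judged on its own terms, your outline follows the general shape of the known proof (normal form for the winding numbers on a geometric symplectic basis, with the Arf invariant as the only obstruction), but it has two genuine gaps.

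First, the realization step is wrong as stated. You propose to use the twist-linearity formula to ``modify any given framing on a chosen symplectic basis by prescribed amounts'' and thereby exhibit framings of both Arf classes. But twist-linearity describes how winding numbers transform under the mapping class group action, and you prove in your first paragraph that this action preserves $\mathcal{A}$; so no amount of twisting can move you from one Arf class to the other. The correct tool is the fact that the set of isotopy classes of framings is a torsor over $H^1(\Sigma;\mathbb{Z})$: adding a class $c$ changes $\phi(x)$ by $c(x)$, and choosing $c$ dual to a single basis curve flips the Arf invariant. Second, the transitivity argument is asserted rather than proved at its crucial point. The intra-handle Euclidean algorithm only reduces each pair to $(\pm\gcd,0)$, and both the ``consolidation into a single normal form per Arf class'' and the final ``transitivity of the mod-$2$ stabilizer on the fiber of integral lifts'' require producing, for each even integer $2k$, a simple closed curve in a prescribed homology class (say that of $y_1$) with winding number exactly $2k$ and controlled intersections with the rest of the basis. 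That existence statement is precisely where the hypothesis $g\geq 2$ does its work and is the technical heart of Randal-Williams' proof; invoking ``the richness of the curve complex'' does not discharge it. As written, the argument establishes that the Arf invariant is an invariant of the orbit, but not that it is a complete one.
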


As a consequence, for a given $\Sigma$ there are exactly two conjugacy classes of framed mapping class groups as subgroups of $\mathrm{Mod}(\Sigma)$.

\begin{rem}[Caveat] In this section we only stated results for surfaces with connected boundary, in terms of \textit{absolute} framings. For general surfaces, the whole theory is still valid, but needs to be formulated for \textit{relative} framings, i.e. only allowing isotopies that are trivial on the boundary. In this more general context, the framed mapping class group is the stabilizer of the \textit{relative} isotopy class of a framing, and one needs to also take into account the action on arcs, getting so-called generalized winding number functions. The orbit space is now classified by a generalized Arf invariant together with the values of the framing on the different boundary components. However, if the boundary is connected the absolute and relative theories are equivalent and we can use this slightly simpler formulation.
	
\end{rem}

\subsection{A framing for positive braids}

Let $\beta$ be a non-split positive braid and $\Sigma_{\beta}$ its fibre surface. We can construct a framing $\phi_{\beta}$ on $\Sigma_{\beta}$ as in Figure~\ref{framing_example}. An explicit and straightforward computation now shows that every standard curve $\gamma_i$ on $\Sigma_{\beta}$ is admissible for $\phi_{\beta}$. Therefore, the monodromy group of $\beta$ is contained in the framed mapping class group of $\phi_{\beta}$:
$$ \mathit{MG}(\beta) \leqslant \mathrm{Mod}(\Sigma_{\beta},\phi_{\beta}) .$$

\begin{figure}
	\centering
	\def\svgscale{0.4}
	\import{Figures/}{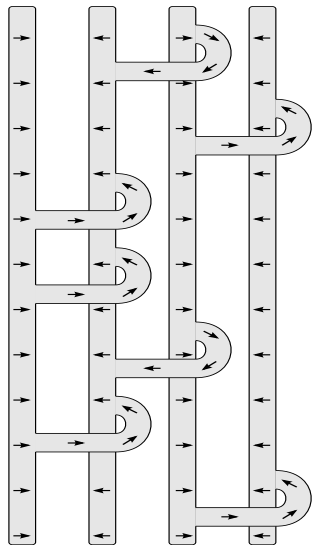}
	\caption{The framing on $\Sigma_{\beta}$ for $\beta = \sigma_3\sigma_1\sigma_2\sigma_1^2\sigma_3\sigma_2$. On the vertical disks it is horizontal with alternating directions, on the twisted bands it is parallel to the core.}
	\label{framing_example}
\end{figure}

We will prove that, at least for positive braids whose closure is a knot of big enough genus, the monodromy group is equal to this framed mapping class group. Therefore, in view of the previous discussion, we now want to compute the Arf invariant of $\phi_{\beta}$.

\begin{prop}\label{prop_arf_invariant}
	Let $\beta$ be a positive braid whose closure is a knot $K$. Then
	$$\mathcal{A}(\phi_{\beta}) = \mathcal{A}(K),$$ where $\mathcal{A}(K)$ is the classical Arf invariant of $K$.
\end{prop}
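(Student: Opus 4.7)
The plan is to realize both $\mathcal{A}(\phi_\beta)$ and $\mathcal{A}(K)$ as the Arf invariants of two quadratic refinements of the mod $2$ intersection form on $H_1(\Sigma_\beta;\mathbb{Z}/2)$, and then to show that these two refinements actually coincide by checking that they agree on the basis of standard curves.

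By definition, the classical Arf invariant $\mathcal{A}(K)$ is the Arf invariant of the quadratic refinement $q_K(a)=\mathrm{lk}(a,a^+) \bmod 2$ associated to the Seifert form of the Seifert surface $\Sigma_\beta$, where $a^+$ denotes a transverse pushoff. On the other hand, by Johnson's theorem recalled above, $q_\phi(a):=\phi_\beta(a)+1 \bmod 2$ is also a quadratic refinement of the same bilinear form, and $\mathcal{A}(\phi_\beta)$ is defined as its Arf invariant. Any two quadratic refinements of a given bilinear form differ by a $\mathbb{Z}/2$-linear functional, so if $q_\phi$ and $q_K$ agree on a basis then they agree everywhere, and their Arf invariants coincide. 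Since $\hat{\beta}$ is a knot we have $r=1$, hence the $2g$ standard curves $\gamma_1,\dots,\gamma_{2g}$ form a basis of $H_1(\Sigma_\beta;\mathbb{Z})$, and hence of $H_1(\Sigma_\beta;\mathbb{Z}/2)$.

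It therefore suffices to show that $q_\phi(\gamma_i)=q_K(\gamma_i)=1$ for each $i$. The first equality is immediate from admissibility, established in the previous subsection: $\phi_\beta(\gamma_i)=0$, hence $q_\phi(\gamma_i)=1$. For the second, Remark~\ref{rem_plumbings} identifies each $\gamma_i$ as the core curve of one of the positive Hopf bands in the plumbing decomposition of $\Sigma_\beta$. The self-linking of the core of a positive Hopf band is $\pm 1$, and this value is unaffected by further plumbings, since a pushoff $\gamma_i^+$ can be chosen to lie inside an arbitrarily small neighbourhood of $\gamma_i$ within its own Hopf band. Thus $\mathrm{lk}(\gamma_i,\gamma_i^+)\equiv 1 \bmod 2$, so $q_K(\gamma_i)=1$, and we conclude $q_\phi=q_K$ and $\mathcal{A}(\phi_\beta)=\mathcal{A}(K)$. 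I expect the only point requiring any care to be the Hopf band self-linking computation, a local check that can also be performed directly from the explicit picture of $\phi_\beta$ in Figure~\ref{framing_example} by inspecting the winding number of $\gamma_i$ around a brick.
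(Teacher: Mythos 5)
Your proposal is correct and follows essentially the same route as the paper: both arguments realize $\mathcal{A}(\phi_\beta)$ and $\mathcal{A}(K)$ as Arf invariants of quadratic refinements of the mod $2$ intersection form on $H_1(\Sigma_\beta,\mathbb{Z}_2)$, check that the two refinements take the value $1$ on the basis of standard curves (admissibility on one side, $S(\gamma_i,\gamma_i)\equiv 1$ on the other), and conclude they coincide since two refinements of the same form differ by a linear functional. The only difference is cosmetic: you spell out the Hopf-band justification for $\mathrm{lk}(\gamma_i,\gamma_i^+)\equiv 1 \bmod 2$, which the paper simply asserts.
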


To prove Proposition~\ref{prop_arf_invariant}, we will need to discuss a bit more in detail the Arf invariant. Let $V$ be a finite dimensional vector space over $\mathbb{Z}_2$ equipped with a non-singular, symmetric bilinear pairing $\langle \cdot, \cdot \rangle : V \times V \rightarrow \mathbb{Z}_2$. Recall that a \textit{quadratic refinement} of the bilinear pairing $\langle \cdot, \cdot \rangle$ is a function $q: V \rightarrow \mathbb{Z}_2$ such that for all $x,y \in V$
$$ q(x+y) = q(x) + q(y) + \langle x,y \rangle.$$ To such a mod $2$ quadratic form it is classically associated the Arf invariant $\mathcal{A}(q) \in \mathbb{Z}_2$.

In our context, we will take $V = H_1(\Sigma_{\beta},\mathbb{Z}_2)$ and $\langle \cdot, \cdot \rangle$ the mod $2$ intersection form. As we have already mentioned, the framing $\phi_{\beta}$ induces a quadratic refinement of the intersection form, whose Arf invariant is $\mathcal{A}(\phi_{\beta})$. On the other hand, if the closure of $\beta$ is a knot $K$, it is known that the Seifert form also induces such a quadratic refinement. More precisely, if $\mathit{S}: H_1(\Sigma_{\beta}) \times H_1(\Sigma_{\beta}) \rightarrow \mathbb{Z}$ denotes the Seifert form, we can define $q: V \rightarrow \mathbb{Z}_2$ by $q(x) = \mathit{S}(x,x) \mod 2$. It is a classical result that the Arf invariant of this quadratic form is indeed an invariant of $K$, that we denote by $\mathcal{A}(K)$.

\begin{proof}[Proof of Proposition~\ref{prop_arf_invariant}]
	Let $\beta$ be a positive braid whose closure is a knot $K$ and $\Sigma_{\beta}$ its fibre surface, equipped with the framing $\phi_{\beta}$. The family of curves $\gamma_i$ form a basis of $V = H_1(\Sigma_{\beta},\mathbb{Z}_2)$. Since by construction all the $\gamma_i$ are admissible for $\phi_{\beta}$, for every $i$ we have the equality
	$$\phi_{\beta}(\gamma_i) + 1 = 1 = \mathit{S}(\gamma_i,\gamma_i) \mod 2.$$
	Since $\{\gamma_i\}$ is a basis, it now follows from the defining equation of a quadratic refinement that for every $x \in V$
	$$\phi_{\beta}(x) + 1  = q(x) \mod 2.$$ Therefore the two quadratic forms $(\phi_{\beta} + 1)$ mod $2$ and $q$ coincide, so their Arf invariants also do. 
\end{proof}

\section{Proof of the main theorem}\label{section_proof}

In this section we will give the proof of Theorem~\ref{theorem_framed}, stating that, up to finitely many exceptions, the monodromy group of a positive braid not of type $A_n$ and whose closure is a knot is a framed mapping class group. In the previous section we have constructed a framing $\phi_{\beta}$ on the fibre surface $\Sigma_{\beta}$ and seen that $ \mathit{MG}(\beta) \leqslant \mathrm{Mod}(\Sigma_{\beta},\phi_{\beta})$, so we only need to deal with the opposite inclusion. This will be done by applying Proposition~\ref{prop_finite_generation}. As a first step, we have to find appropriate subsurfaces supporting an $E$-arboreal spanning configuration. For this, we will separately consider the case of braids on $3$-strands (Prop.~\ref{prop 3-braids}), on at least $11$ strands (Prop.~\ref{prop big braid index}) and finally with an intermediate number of strands (Prop.~\ref{prop_inter_braid_index}).

\begin{prop}\label{prop 3-braids}
	Let $\beta$ be a prime positive $3$-braid of genus $g\geq 5$ which is not of type $A_n$ or $D_n$. Then, excepted finitely many braids, up to positive braid isotopy its linking graph contains an induced subtree which is an $E$-arboreal spanning configuration on a subsurface of genus $g \geq 5$.
\end{prop}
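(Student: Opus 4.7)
My plan is to exploit the simple two-column structure of the linking graph of a $3$-braid. First, using Propositions~\ref{prop_elem_conj} and~\ref{prop_braid_rel_invariance}, together with Corollary~\ref{cor_conj_invariance} when a half twist is present, I would put $\beta$ into a cyclic normal form $\sigma_1^{a_1}\sigma_2^{b_1}\cdots\sigma_1^{a_k}\sigma_2^{b_k}$ with all $a_i,b_i\geq 1$. Primality forces $k\geq 1$ with genuine cross-column interaction, and excluding types $A_n$ and $D_n$ rules out the simplest families. In this description, the $\sigma_1$-bricks form a chain indexed by consecutive pairs of $\sigma_1$'s in the word (and symmetrically for $\sigma_2$-bricks), with a cross-edge between a $\sigma_1$-brick and a $\sigma_2$-brick exactly when their time intervals in the word overlap properly. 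An interior brick of a pure syllable $\sigma_i^{a_j}$ carries no cross-edges, whereas a \emph{bridge} $\sigma_1$-brick at a junction with $b_j=1$ is adjacent to exactly two $\sigma_2$-bricks, giving it generic degree~$4$.

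The heart of the argument is an explicit construction. Pick a bridge $\sigma_1$-brick $A$ at some junction with $b_j=1$ and $a_j, a_{j+1}\geq 3$; denote by $A_\pm$ and $A_{\pm\pm}$ the two nearest chain neighbours on each side, and by $B$ exactly one of the two cross-edge $\sigma_2$-bricks meeting $A$. A direct check -- using that $A_\pm, A_{\pm\pm}$ are interior bricks of pure $\sigma_1$-syllables and so contribute no extra cross-edges, and that the other cross-edge $\sigma_2$-brick has been discarded to break the triangle $A$-$B_\mathrm{left}$-$B_\mathrm{right}$-$A$ -- shows that these six curves induce a subtree isomorphic to $E_6$, with $A$ as trivalent vertex and branches of lengths $2$, $2$, $1$. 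To promote this tree to a spanning configuration of genus $\geq 5$, I would adjoin further interior $\sigma_1$-bricks, each attaching by a single chain edge; the hypothesis $g\geq 5$ guarantees enough such bricks are available, possibly by continuing across adjacent syllables.

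The principal difficulty is the case analysis when no such bridge exists: when $k=1$, when every $b_j\geq 2$, or when all $\sigma_1$-runs are too short. These I would handle by exchanging the roles of $\sigma_1$ and $\sigma_2$ (the linking graph description is symmetric in the two generators), or by applying the braid relation $\sigma_1\sigma_2\sigma_1=\sigma_2\sigma_1\sigma_2$ locally to manufacture a $b_j=1$ junction out of a wider one. Whenever none of these moves applies, the bounded combinatorial complexity of $\beta$ forces it into the finite exceptional family promised by the statement. Throughout, the main bookkeeping is verifying that the induced-tree property survives each extension, i.e.\ ruling out hidden triangles coming from interlocking bricks at the junctions one has already passed.
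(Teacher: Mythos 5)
Your core construction is sound as far as it goes: at a junction with $b_j=1$ and $a_j,a_{j+1}\geq 3$, the bridge $\sigma_1$-brick together with its two nearest interior chain neighbours on each side and one of its two cross-linked $\sigma_2$-bricks does give an induced tripod $T(1,2,2)=E_6$, for the reasons you state. The paper's proof also starts from the normal form $\sigma_1^{a_1}\sigma_2^{b_1}\cdots\sigma_1^{a_m}\sigma_2^{b_m}$ with $a_i\geq 2$, $b_i\geq 1$ and, in essence, hunts for tripod trees $T(1,k,l)$ with enough vertices. But there is a genuine gap in your closing claim that whenever your construction and its two fallbacks (swapping the roles of $\sigma_1$ and $\sigma_2$, or applying a local braid relation) fail, the braid lands in a finite exceptional family. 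That is false: the braids $(\sigma_1^2\sigma_2)^m$ for all $m$, and the braids with every $a_i=2$ and every $b_i\geq 2$, are infinite families in which no junction of the required shape exists on either side of the symmetry, yet almost all of them do admit the required configuration. The paper only finds it after applying carefully chosen sequences of braid relations (for instance a chain of relations for $(\sigma_1^2\sigma_2)^5$, and repeated use of the fact that $\sigma_1^2\sigma_2\sigma_1^2$ commutes with $\sigma_2$ in the $m=3$ cases), and identifies exactly three braids, e.g.\ $\sigma_1^2\sigma_2^2\sigma_1^2\sigma_2^2\sigma_1^2\sigma_2^3$, where no such configuration can be found at all. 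Your proposal defers precisely this — the entire substance of the proof — to a claim that does not hold.

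A second, smaller issue: an $E_6$ subtree alone is far from enough; the spanning configuration must have genus at least $5$, hence roughly $10$ to $11$ vertices, and extending your six-vertex tree along the $\sigma_1$-chain is obstructed exactly where you wave it away. The next brick past the interior of the syllable $\sigma_1^{a_{j+1}}$ is the junction brick at $j+1$, which is cross-linked to your chosen $B$, so adjoining it creates a cycle; one must either stop, re-route through the $\sigma_2$-column, or discard $B$ and re-attach it elsewhere. This is the ``hidden triangle'' bookkeeping you acknowledge but do not carry out, and it is where the low-genus cases ($m=3$ with $\sum a_i+\sum b_i$ close to $12$) become delicate enough in the paper to produce the actual exceptions.
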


\begin{proof}
	Let $\beta$ be a positive $3$-braid which is not of type $A_n$. Up to elementary conjugation and braid relation we can assume that
	$\beta = \sigma_1^{a_1}\sigma_2^{b_1} \cdots \sigma_1^{a_m}\sigma_2^{b_m},$
	with $a_i \geq 2$ and $b_i \geq 1$ for all $i \in \{1,\cdots,m\}$. First of all, notice that if we can find a suitable subtree for a braid $\sigma_1^{a_1}\sigma_2^{b_1} \cdots \sigma_1^{a_m}\sigma_2^{b_m}$, the result will also hold for any braid $\sigma_1^{a'_1}\sigma_2^{b'_1} \cdots \sigma_1^{a'_m}\sigma_2^{b'_m}$ for $a'_i \geq a_i$ and $b'_i\geq b_i$.	We will now prove the result by case distinction over $m$.
	
	\begin{itemize}
		\item $m\geq 5$ : Every braid with $m\geq 5$ has genus $g\geq 5$ so it is clearly enough to prove the result for $m = 5$. If one of the $b_i$ is at least $2$, we can assume that $\beta = \sigma_1^2\sigma_2\sigma_1^2\sigma_2\sigma_1^2\sigma_2\sigma_1^2\sigma_2\sigma_1^2\sigma_2^2$. In the left of Figure~\ref{mgeq5} we now see an induced subtree of the linking graph with the required properties. Similarly if one of the $a_i$ is at least $3$ we can assume that $\beta = \sigma_1^2\sigma_2\sigma_1^2\sigma_2\sigma_1^2\sigma_2\sigma_1^3\sigma_2\sigma_1^2\sigma_2$, and we find the induced subtree of the right of Figure~\ref{mgeq5}.
		
		\begin{figure}
			\centering
			\begin{minipage}{0.4\textwidth}
				\centering
				\includegraphics{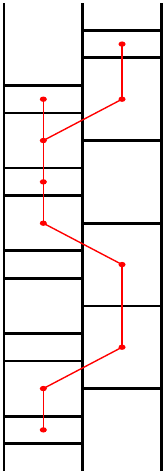}
			\end{minipage}%
			\begin{minipage}{0.4\textwidth}
				\centering
				\includegraphics{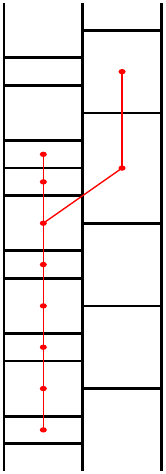}
			\end{minipage}
			\caption{Subtrees of $\sigma_1^2\sigma_2\sigma_1^2\sigma_2\sigma_1^2\sigma_2\sigma_1^2\sigma_2\sigma_1^2\sigma_2^2$ and $\sigma_1^2\sigma_2\sigma_1^2\sigma_2\sigma_1^2\sigma_2\sigma_1^3\sigma_2\sigma_1^2\sigma_2$}
			\label{mgeq5}
		\end{figure}
		
		We are now only left with the braid $\sigma_1^2\sigma_2\sigma_1^2\sigma_2\sigma_1^2\sigma_2\sigma_1^2\sigma_2\sigma_1^2\sigma_2$. Here we do not directly find an appropriate subtree, but Figure~\ref{m=5} shows a sequence of braid relations that makes it visible.
		
		\begin{figure}
			\centering
			\def\svgwidth{0.4\columnwidth}
\begingroup%
  \makeatletter%
  \providecommand\color[2][]{%
    \errmessage{(Inkscape) Color is used for the text in Inkscape, but the package 'color.sty' is not loaded}%
    \renewcommand\color[2][]{}%
  }%
  \providecommand\transparent[1]{%
    \errmessage{(Inkscape) Transparency is used (non-zero) for the text in Inkscape, but the package 'transparent.sty' is not loaded}%
    \renewcommand\transparent[1]{}%
  }%
  \providecommand\rotatebox[2]{#2}%
  \newcommand*\fsize{\dimexpr\f@size pt\relax}%
  \newcommand*\lineheight[1]{\fontsize{\fsize}{#1\fsize}\selectfont}%
  \ifx\svgwidth\undefined%
    \setlength{\unitlength}{56.18425751bp}%
    \ifx\svgscale\undefined%
      \relax%
    \else%
      \setlength{\unitlength}{\unitlength * \real{\svgscale}}%
    \fi%
  \else%
    \setlength{\unitlength}{\svgwidth}%
  \fi%
  \global\let\svgwidth\undefined%
  \global\let\svgscale\undefined%
  \makeatother%
  \begin{picture}(1,0.56955456)%
    \lineheight{1}%
    \setlength\tabcolsep{0pt}%
    \put(0,0){\includegraphics[width=\unitlength,page=1]{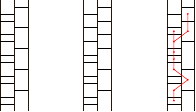}}%
    \put(0.25081983,0.28477728){\makebox(0,0)[lt]{\lineheight{1.25}\smash{\begin{tabular}[t]{l}$\simeq$\end{tabular}}}}%
    \put(0.67798573,0.28477728){\makebox(0,0)[lt]{\lineheight{1.25}\smash{\begin{tabular}[t]{l}$\simeq$\end{tabular}}}}%
  \end{picture}%
\endgroup%

			\caption{The braid $\sigma_1^2\sigma_2\sigma_1^2\sigma_2\sigma_1^2\sigma_2\sigma_1^2\sigma_2\sigma_1^2\sigma_2$}
			\label{m=5}
		\end{figure}

		\item $m=4$ : We will treat several cases. Let us first assume that there is an $i$ such that $b_i\geq 2$. If there are $i\neq j$ such that $b_i,b_j \geq 2$, then up to cyclic ordering we only have to deal with the two cases depicted in the left of Figure~\ref{m=4_first}, where we see the sought subtrees. Similarly, if there is only one $b_i$ greater than $2$ but there is one $a_j$ bigger than $3$ we will find one of the trees in the right of Figure~\ref{m=4_first}. Finally, if all the $a_j$ are equal to $2$ and there is only one $b_i$ greater than $2$, it is enough to consider the braid $\sigma_1^2\sigma_2\sigma_1^2\sigma_2\sigma_1^2\sigma_2\sigma_1^2\sigma_2^2$, for which we can find the subtree after applying some braid relations as in Figure~\ref{m=4_second}.
		
		\begin{figure}
			\centering
			\def\svgwidth{0.45\columnwidth}
			\import{Figures/3braids/}{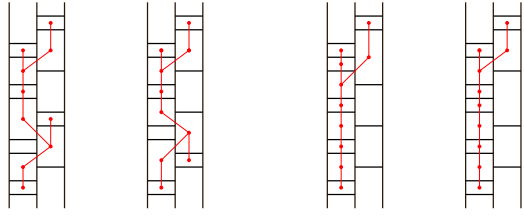}
			\caption{The cases when $m=4$ and $b_i,b_j \geq 2$ (left) or $b_i\geq 2$ and $a_j \geq 3$ (right)}
			\label{m=4_first}
		\end{figure}
	
		\begin{figure}
			\centering
			\def\svgwidth{.55\columnwidth}
\begingroup%
  \makeatletter%
  \providecommand\color[2][]{%
    \errmessage{(Inkscape) Color is used for the text in Inkscape, but the package 'color.sty' is not loaded}%
    \renewcommand\color[2][]{}%
  }%
  \providecommand\transparent[1]{%
    \errmessage{(Inkscape) Transparency is used (non-zero) for the text in Inkscape, but the package 'transparent.sty' is not loaded}%
    \renewcommand\transparent[1]{}%
  }%
  \providecommand\rotatebox[2]{#2}%
  \newcommand*\fsize{\dimexpr\f@size pt\relax}%
  \newcommand*\lineheight[1]{\fontsize{\fsize}{#1\fsize}\selectfont}%
  \ifx\svgwidth\undefined%
    \setlength{\unitlength}{88.18425751bp}%
    \ifx\svgscale\undefined%
      \relax%
    \else%
      \setlength{\unitlength}{\unitlength * \real{\svgscale}}%
    \fi%
  \else%
    \setlength{\unitlength}{\svgwidth}%
  \fi%
  \global\let\svgwidth\undefined%
  \global\let\svgscale\undefined%
  \makeatother%
  \begin{picture}(1,0.31751699)%
    \lineheight{1}%
    \setlength\tabcolsep{0pt}%
    \put(0,0){\includegraphics[width=\unitlength,page=1]{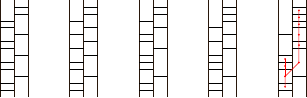}}%
    \put(0.13712342,0.15875848){\color[rgb]{0,0,0}\makebox(0,0)[lt]{\lineheight{1.25}\smash{\begin{tabular}[t]{l}$\simeq$\end{tabular}}}}%
    \put(0.36392126,0.15875848){\makebox(0,0)[lt]{\lineheight{1.25}\smash{\begin{tabular}[t]{l}$\simeq$\end{tabular}}}}%
    \put(0.5907191,0.15875848){\makebox(0,0)[lt]{\lineheight{1.25}\smash{\begin{tabular}[t]{l}$\simeq$\end{tabular}}}}%
    \put(0.81751695,0.15875848){\makebox(0,0)[lt]{\lineheight{1.25}\smash{\begin{tabular}[t]{l}$\simeq$\end{tabular}}}}%
  \end{picture}%
\endgroup%

			\caption{The braid $\sigma_1^2\sigma_2\sigma_1^2\sigma_2\sigma_1^2\sigma_2\sigma_1^2\sigma_2^2$}
			\label{m=4_second}
		\end{figure}
		
		We are now left with $b_i = 1$ for all $i$. Notice that in that case there need to be at least one $a_i \geq 3$, otherwise the braid has genus less than $5$. If there are two non-consecutive $a_i$ and $a_j$ greater than $3$, it is enough to consider the braid $\sigma_1^3\sigma_2\sigma_1^2\sigma_2\sigma_1^3\sigma_2\sigma_1^2\sigma_2$, for which we find an appropriate subtree in the left of Figure~\ref{m=4_b=1}. If not, up to cyclic ordering there must be two consecutive $a_i=a_{i+1}=2$, in which case we can apply a sequence of braid relations as we did in the right of Figure~\ref{m=4_b=1} and find our subtree.
		
		\begin{figure}
			\centering
			\def\svgwidth{.7\columnwidth}
\begingroup%
  \makeatletter%
  \providecommand\color[2][]{%
    \errmessage{(Inkscape) Color is used for the text in Inkscape, but the package 'color.sty' is not loaded}%
    \renewcommand\color[2][]{}%
  }%
  \providecommand\transparent[1]{%
    \errmessage{(Inkscape) Transparency is used (non-zero) for the text in Inkscape, but the package 'transparent.sty' is not loaded}%
    \renewcommand\transparent[1]{}%
  }%
  \providecommand\rotatebox[2]{#2}%
  \newcommand*\fsize{\dimexpr\f@size pt\relax}%
  \newcommand*\lineheight[1]{\fontsize{\fsize}{#1\fsize}\selectfont}%
  \ifx\svgwidth\undefined%
    \setlength{\unitlength}{104.18425751bp}%
    \ifx\svgscale\undefined%
      \relax%
    \else%
      \setlength{\unitlength}{\unitlength * \real{\svgscale}}%
    \fi%
  \else%
    \setlength{\unitlength}{\svgwidth}%
  \fi%
  \global\let\svgwidth\undefined%
  \global\let\svgscale\undefined%
  \makeatother%
  \begin{picture}(1,0.28795137)%
    \lineheight{1}%
    \setlength\tabcolsep{0pt}%
    \put(0,0){\includegraphics[width=\unitlength,page=1]{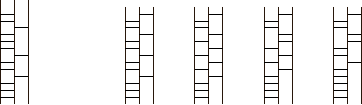}}%
    \put(0.46160648,0.13437731){\makebox(0,0)[lt]{\lineheight{1.25}\smash{\begin{tabular}[t]{l}$\simeq$\end{tabular}}}}%
    \put(0.65357406,0.13437731){\makebox(0,0)[lt]{\lineheight{1.25}\smash{\begin{tabular}[t]{l}$\simeq$\end{tabular}}}}%
    \put(0.84554158,0.13437731){\makebox(0,0)[lt]{\lineheight{1.25}\smash{\begin{tabular}[t]{l}$\simeq$\end{tabular}}}}%
    \put(0,0){\includegraphics[width=\unitlength,page=2]{m=4_b=1.pdf}}%
  \end{picture}%
\endgroup%

			\caption{The cases $m=4$ and $b_i=1$ for all $i$}
			\label{m=4_b=1}
		\end{figure}
	
	\item $m=3$ : This will be the lengthier case, since there are many low genus braids that require special treatment. Let $\beta = \sigma_1^{a_1}\sigma_2^{b_1}\sigma_1^{a_2}\sigma_2^{b_2}\sigma_1^{a_3}\sigma_2^{b_3}$ be a braid of genus $g\geq 5$, then a simple argument implies that $\sum a_i + \sum b_i \geq 12$.
	If $\sum b_i \geq 8$, it is enough to consider the braids in Figure~\ref{m=3_b8}. Similarly, when $\sum a_i \geq 11$ it is enough to consider the case when all the $b_i$ are equal to $1$, and up to elementary conjugation we can assume that $a_3 \geq 3$. In this case, by taking all the vertices in the left column and only the topmost of the right column we will always end up finding a tripod tree $T(1,k,9-k)$ for $k\geq 2$, which all correspond to subsurfaces of genus $5$, see Figure~\ref{m=3_a11} for some examples.
			
	\begin{figure}
		\centering
		\def\svgwidth{.8\columnwidth}
		\import{Figures/3braids/}{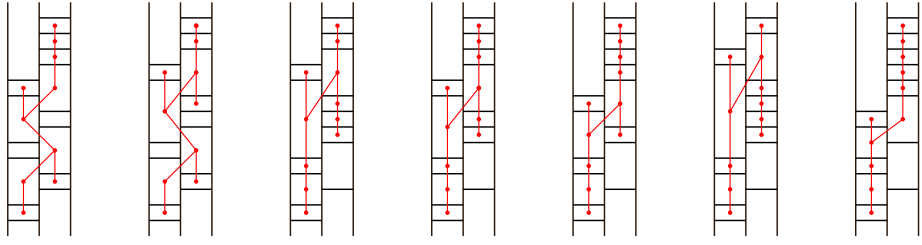}
		\caption{When $m=3$ and $\sum b_i = 8$}
		\label{m=3_b8}
	\end{figure}

	\begin{figure}
	\centering
	\def\svgwidth{.65\columnwidth}
\begingroup%
  \makeatletter%
  \providecommand\color[2][]{%
    \errmessage{(Inkscape) Color is used for the text in Inkscape, but the package 'color.sty' is not loaded}%
    \renewcommand\color[2][]{}%
  }%
  \providecommand\transparent[1]{%
    \errmessage{(Inkscape) Transparency is used (non-zero) for the text in Inkscape, but the package 'transparent.sty' is not loaded}%
    \renewcommand\transparent[1]{}%
  }%
  \providecommand\rotatebox[2]{#2}%
  \newcommand*\fsize{\dimexpr\f@size pt\relax}%
  \newcommand*\lineheight[1]{\fontsize{\fsize}{#1\fsize}\selectfont}%
  \ifx\svgwidth\undefined%
    \setlength{\unitlength}{86.73186493bp}%
    \ifx\svgscale\undefined%
      \relax%
    \else%
      \setlength{\unitlength}{\unitlength * \real{\svgscale}}%
    \fi%
  \else%
    \setlength{\unitlength}{\svgwidth}%
  \fi%
  \global\let\svgwidth\undefined%
  \global\let\svgscale\undefined%
  \makeatother%
  \begin{picture}(1,0.34589368)%
    \lineheight{1}%
    \setlength\tabcolsep{0pt}%
    \put(0,0){\includegraphics[width=\unitlength,page=1]{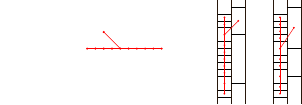}}%
    \put(-0.00095706,0.17950431){\makebox(0,0)[lt]{\lineheight{1.25}\smash{\begin{tabular}[t]{l}$T(1,k,9-k) = $\end{tabular}}}}%
    \put(0.32943112,0.14090019){\makebox(0,0)[lt]{\lineheight{1.25}\smash{\begin{tabular}[t]{l}$k$\end{tabular}}}}%
    \put(0.44819731,0.13971777){\makebox(0,0)[lt]{\lineheight{1.25}\smash{\begin{tabular}[t]{l}$9-k$\end{tabular}}}}%
  \end{picture}%
\endgroup%

	\caption{The tripod trees for $m=3$ and $\sum a_i = 11$}
	\label{m=3_a11}
	\end{figure}

	We are now left with the low genus cases.
	
	\begin{itemize}
		\item $\sum a_i = 6$ : If $\sum b_i = 6$ we always get a link with $3$ components and genus $4$. If $\sum b_i = 7$ and there is at least one of the $b_i$ equal to one, up to elementary conjugation we can assume that $\beta = \sigma_1^2\sigma_2\sigma_1^2\sigma_2^{b_2}\sigma_1^2\sigma_2^{b_3}$ with $b_2+b_3 = 6$. Using that $\sigma_1^2 \sigma_2 \sigma_1^2 $ commutes with $\sigma_2$ we get $\sigma_2^{b_2}\sigma_1^2\sigma_2\sigma_1^4\sigma_2^{b_3}$, which is conjugate to $\sigma_1^2\sigma_2\sigma_1^4\sigma_2^6$, whose intersection graph is a tree with the required properties. 
		We are now left with $b_i=2$ for all $i$. Up to elementary conjugation there is only one such braid, $\sigma_1^2\sigma_2^2\sigma_1^2\sigma_2^2\sigma_1^2\sigma_2^3$. Here there are no possible braid relations to apply and it is not possible to find a subtree of big enough genus.

		\item $\sum a_i = 7$ : Let us first assume that $\sum b_i = 6$. If there is at least one $b_i$ equal to one, we can directly find our subtrees. In Figure~\ref{m=3_a7_b6} we see some of the cases. The omitted ones are symmetric and will give the same subtrees. Notice that this will also cover all the braids with $\sum a_i \geq 7$ and $\sum b_i \geq 7$. If $b_i = 2$ for all $i$, up to elementary conjugation there is only the braid $\sigma_1^3\sigma_2^2\sigma_1^2\sigma_2^2\sigma_1^2\sigma_2^2$, for which again we cannot find any subtree of big enough genus.

		\begin{figure}
			\centering
			\def\svgwidth{.75\columnwidth}
			\import{Figures/3braids/}{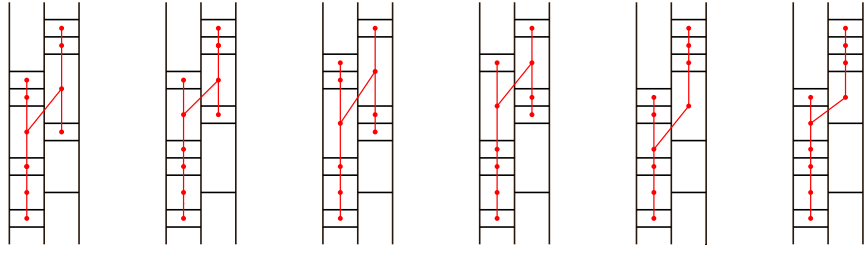}
			\caption{When $\sum a_i = 7$ and $\sum b_i = 6$, with $b_1 = 1$}
			\label{m=3_a7_b6}
		\end{figure}
		
		If $\sum b_i = 5$, up to conjugation we have $\beta =\sigma_1^3\sigma_2^{b_1}\sigma_1^2\sigma_2^{b_2}\sigma_1^2\sigma_2^{b_3}$. If $b_2 = 1$, using that $\sigma_1^2 \sigma_2 \sigma_1^2 $ commutes with $\sigma_2$ we get the braid $\sigma_1^5 \sigma_2\sigma_1^2 \sigma_2^4$, whose intersection graph is a tree with the required properties. We are left with the three braids $\sigma_1^3\sigma_2\sigma_1^2\sigma_2^2\sigma_1^2\sigma_2^2$, $\sigma_1^3\sigma_2^2\sigma_1^2\sigma_2^2\sigma_1^2\sigma_2$ and $\sigma_1^3\sigma_2\sigma_1^2\sigma_2^3\sigma_1^2\sigma_2$. For the first, up to elementary conjugation and applying the commutativity relation as before we have 
		$$\sigma_1^3\sigma_2\sigma_1^2\sigma_2^2\sigma_1^2\sigma_2^2 \simeq \sigma_1^2\sigma_2^2\sigma_1^3\sigma_2\sigma_1^2\sigma_2^2 = \sigma_1^2\sigma_2^2\sigma_1\sigma_2^2\sigma_1^2\sigma_2\sigma_1^2 = \sigma_1^4\sigma_2^2\sigma_1\sigma_2^3\sigma_1^2 \simeq \sigma_1^6\sigma_2^2\sigma_1\sigma_2^3$$ and we get a suitable tree. The second braid is symmetric and will lead to the same intersection tree. For the last, we similarly get
		$$\sigma_1^3\sigma_2\sigma_1^2\sigma_2^3\sigma_1^2\sigma_2 = \sigma_1\sigma_2^3\sigma_1^2\sigma_2\sigma_1^4\sigma_2 \simeq \sigma_2\sigma_1\sigma_2^3\sigma_1^2\sigma_2\sigma_1^4 = \sigma_1^3\sigma_2\sigma_1^3\sigma_2\sigma_1^4 \simeq \sigma_1^7\sigma_2\sigma_1^3\sigma_2.$$
		
		\item $\sum a_i = 8$ : If $\sum b_i \geq 6$, then either we are already done by the case $\sum a_i = 7$ (if one of the $b_i$ is equal to one) or it is symmetric to the case $\sum b_i \geq 8$. If $\sum b_i = 5$ after applying some positive braid isotopy we can always find an appropriate subtree, with the lone exception of $\beta = \sigma_1^3\sigma_2\sigma_1^3\sigma_2^2\sigma_1^2\sigma_2^2$, for which we couldn't find any. In Figure~\ref{m=3_a8_b5} we see some of the cases, the remaining ones being braid isotopic to those. Finally, if $\sum b_i = 4$, we only get links of $3$ components and genus $4$ excepted for the braid $\beta = \sigma_1^3\sigma_2\sigma_1^3\sigma_2\sigma_1^2\sigma_2^2$ (and the symmetric $\beta = \sigma_1^2\sigma_2\sigma_1^3\sigma_2\sigma_1^3\sigma_2^2$), for which we see the tree in Figure~\ref{m=3_a8_b4}.
		
		\begin{figure}[h]
			\centering
			\def\svgwidth{.8\columnwidth}
\begingroup%
  \makeatletter%
  \providecommand\color[2][]{%
    \errmessage{(Inkscape) Color is used for the text in Inkscape, but the package 'color.sty' is not loaded}%
    \renewcommand\color[2][]{}%
  }%
  \providecommand\transparent[1]{%
    \errmessage{(Inkscape) Transparency is used (non-zero) for the text in Inkscape, but the package 'transparent.sty' is not loaded}%
    \renewcommand\transparent[1]{}%
  }%
  \providecommand\rotatebox[2]{#2}%
  \newcommand*\fsize{\dimexpr\f@size pt\relax}%
  \newcommand*\lineheight[1]{\fontsize{\fsize}{#1\fsize}\selectfont}%
  \ifx\svgwidth\undefined%
    \setlength{\unitlength}{116.18427277bp}%
    \ifx\svgscale\undefined%
      \relax%
    \else%
      \setlength{\unitlength}{\unitlength * \real{\svgscale}}%
    \fi%
  \else%
    \setlength{\unitlength}{\svgwidth}%
  \fi%
  \global\let\svgwidth\undefined%
  \global\let\svgscale\undefined%
  \makeatother%
  \begin{picture}(1,0.55084921)%
    \lineheight{1}%
    \setlength\tabcolsep{0pt}%
    \put(0,0){\includegraphics[width=\unitlength,page=1]{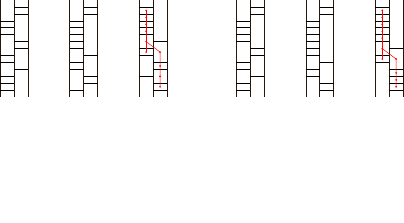}}%
    \put(0.1083808,0.43035085){\makebox(0,0)[lt]{\lineheight{1.25}\smash{\begin{tabular}[t]{l}$\simeq$\end{tabular}}}}%
    \put(0.28052109,0.43035085){\makebox(0,0)[lt]{\lineheight{1.25}\smash{\begin{tabular}[t]{l}$\simeq$\end{tabular}}}}%
    \put(0.69365774,0.43035094){\makebox(0,0)[lt]{\lineheight{1.25}\smash{\begin{tabular}[t]{l}$\simeq$\end{tabular}}}}%
    \put(0.86579812,0.43035094){\makebox(0,0)[lt]{\lineheight{1.25}\smash{\begin{tabular}[t]{l}$\simeq$\end{tabular}}}}%
    \put(0,0){\includegraphics[width=\unitlength,page=2]{m=3_a8_b5.pdf}}%
    \put(0.22887903,0.12049827){\makebox(0,0)[lt]{\lineheight{1.25}\smash{\begin{tabular}[t]{l}$\simeq$\end{tabular}}}}%
    \put(0.40101936,0.12049827){\makebox(0,0)[lt]{\lineheight{1.25}\smash{\begin{tabular}[t]{l}$\simeq$\end{tabular}}}}%
    \put(0.5731597,0.12049827){\makebox(0,0)[lt]{\lineheight{1.25}\smash{\begin{tabular}[t]{l}$\simeq$\end{tabular}}}}%
    \put(0.74530008,0.12049827){\makebox(0,0)[lt]{\lineheight{1.25}\smash{\begin{tabular}[t]{l}$\simeq$\end{tabular}}}}%
  \end{picture}%
\endgroup%

			\caption{When $\sum a_i = 8$ and $\sum b_i = 5$}
			\label{m=3_a8_b5}
		\end{figure}
		
		\begin{figure}
			\centering
			\def\svgwidth{.55\columnwidth}
\begingroup%
  \makeatletter%
  \providecommand\color[2][]{%
    \errmessage{(Inkscape) Color is used for the text in Inkscape, but the package 'color.sty' is not loaded}%
    \renewcommand\color[2][]{}%
  }%
  \providecommand\transparent[1]{%
    \errmessage{(Inkscape) Transparency is used (non-zero) for the text in Inkscape, but the package 'transparent.sty' is not loaded}%
    \renewcommand\transparent[1]{}%
  }%
  \providecommand\rotatebox[2]{#2}%
  \newcommand*\fsize{\dimexpr\f@size pt\relax}%
  \newcommand*\lineheight[1]{\fontsize{\fsize}{#1\fsize}\selectfont}%
  \ifx\svgwidth\undefined%
    \setlength{\unitlength}{68.18424225bp}%
    \ifx\svgscale\undefined%
      \relax%
    \else%
      \setlength{\unitlength}{\unitlength * \real{\svgscale}}%
    \fi%
  \else%
    \setlength{\unitlength}{\svgwidth}%
  \fi%
  \global\let\svgwidth\undefined%
  \global\let\svgscale\undefined%
  \makeatother%
  \begin{picture}(1,0.38131992)%
    \lineheight{1}%
    \setlength\tabcolsep{0pt}%
    \put(0,0){\includegraphics[width=\unitlength,page=1]{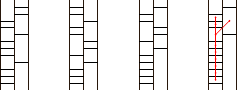}}%
    \put(0.18467813,0.18332677){\makebox(0,0)[lt]{\lineheight{1.25}\smash{\begin{tabular}[t]{l}$\simeq$\end{tabular}}}}%
    \put(0.47800099,0.18332677){\makebox(0,0)[lt]{\lineheight{1.25}\smash{\begin{tabular}[t]{l}$\simeq$\end{tabular}}}}%
    \put(0.77132389,0.18332677){\makebox(0,0)[lt]{\lineheight{1.25}\smash{\begin{tabular}[t]{l}$\simeq$\end{tabular}}}}%
  \end{picture}%
\endgroup%

			\caption{The braid $\sigma_1^3\sigma_2\sigma_1^3\sigma_2\sigma_1^2\sigma_2^2$}
			\label{m=3_a8_b4}
		\end{figure}
	
		\item $\sum a_i = 9$ : If $\sum b_i \geq 4$, it is enough to consider $\beta = \sigma_1^{a_1}\sigma_2\sigma_1^{a_2}\sigma_2\sigma_1^{a_3}\sigma_2^2$. By taking all the vertices of the linking graph excepted the lowermost of the right column, according to the value of $a_3$ we will get one of the tripod trees $T(1,2,6)$, $T(2,2,5)$ and $T(3,2,4)$, which all correspond to surfaces of genus $5$. If $\sum b_i = 3$ and there is one even $a_i$, we only have to consider the three braids $\sigma_1^3\sigma_2\sigma_1^2\sigma_2\sigma_1^4\sigma_2$, $\sigma_1^3\sigma_2\sigma_1^4\sigma_2\sigma_1^2\sigma_2$ and $\sigma_1^5\sigma_2\sigma_1^2\sigma_2\sigma_1^2\sigma_2$. The first two are symmetric, and using that $\sigma_1^2\sigma_2\sigma_1^2$ commutes with $\sigma_2$ we see that the first one is braid equivalent to the last, for which we furthermore have $\sigma_1^5\sigma_2\sigma_1^2\sigma_2\sigma_1^2\sigma_2 = \sigma_1^7\sigma_2\sigma_1^2\sigma_2^2$, whose intersection graph is a tree. Finally, if all the $a_i$ are odd, we get a link of genus $4$.
		
		\item $\sum a_i = 10$ : The only case left is when $\sum b_i = 3$. If one of the $a_i$ is odd we can suppose that $a_3$ is odd, in which case by taking all the bricks excepted the lowermost of the right column we will get a tripod tree $T(1,2,6)$ or $T(1,4,4)$, which both correspond to subsurfaces of genus $5$. If all the $a_i$ are even, up to elementary conjugation we only have the braids $\sigma_1^4\sigma_2\sigma_1^4\sigma_2\sigma_1^2\sigma_2$ and $\sigma_1^6\sigma_2\sigma_1^2\sigma_2\sigma_1^2\sigma_2$. Those are actually related by braid relations and elementary conjugations, and the very same argument used for $\sum a_i = 9$ and $\sum b_i = 3$ will yield the required tree.
		
	\end{itemize}

	\item $m=2$ : For a braid $\beta = \sigma_1^{a_1}\sigma_2^{b_1}\sigma_1^{a_2}\sigma_2^{b_2}$ of genus at least $5$ the intersection graph is always a tree with at least $10$ crossings. Furthermore, by direct inspection we see that those trees will always contain $E_6$ unless they are of type $D_n$.
	
	\item $m=1$ : In this case we only get non-prime braids.

	\end{itemize}

	To sum up, the result holds for all braids excepted $\sigma_1^2\sigma_2^2\sigma_1^2\sigma_2^2\sigma_1^2\sigma_2^3$, its symmetric $\sigma_1^3\sigma_2^2\sigma_1^2\sigma_2^2\sigma_1^2\sigma_2^2$ (which gives the same link with opposite orientation) and $\sigma_1^3\sigma_2\sigma_1^3\sigma_2^2\sigma_1^2\sigma_2^2$ (which gives an invertible link). 
\end{proof}

\bigskip

We will now consider braids with big positive braid index.

\begin{prop}\label{prop big braid index}
	Let $\beta$ be a prime positive braid on $N \geq 11$ strands and whose closure is a knot not of type $A_n$. Then, up to positive braid isotopy and excepted finitely many braids, its linking graph contains an induced subtree which is an $E$-arboreal spanning configuration on a subsurface of genus $g\geq 5$.
\end{prop}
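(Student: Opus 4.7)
The plan is to run an analysis analogous to Proposition~\ref{prop 3-braids}, but made structurally much easier by the abundance of strands. The starting observation is that the closure $\hat\beta$ being a knot on $N$ strands forces each generator $\sigma_1, \ldots, \sigma_{N-1}$ to appear in $\beta$: otherwise the underlying permutation would fix some string and the closure would be disconnected. Up to positive braid isotopy, I would first reduce $\beta$ to a canonical form in which this ``column'' structure of the brick diagram is displayed cleanly, and in which (using the hypothesis that $\beta$ is not of type $A_n$) at least one interaction between distinct columns is explicit.

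The core of the argument is then to extract, inside the linking graph, an induced ``zig-zag'' path $P$ coming from consecutive cross-column patterns $\sigma_i\sigma_{i+1}\sigma_i\sigma_{i+1}$ (or $\sigma_{i+1}\sigma_i\sigma_{i+1}\sigma_i$) for $i = 1, \ldots, N-2$. With all $N-1 \geq 10$ columns populated, such a path has length at least $9$, which is already more than enough to contain an $E_6$ spine once we add a single branching vertex. The branch is provided either by a vertical intra-column edge coming from a subword $\sigma_i^3$, or by a secondary cross-column brick. To reach a subsurface of genus at least $5$ the spanning tree must contain roughly eleven curves, which, again thanks to $N \geq 11$, is comfortably available in the linking graph for braids of sufficiently large crossing number.

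The main obstacle, exactly as in Proposition~\ref{prop 3-braids}, is to ensure that the subgraph of the linking graph selected is actually \emph{induced}: no extra linking-graph edge may appear inside our chosen set of bricks, for otherwise the configuration is no longer arboreal. Some adjacency patterns in the brick diagram automatically force such extra edges (for instance, when two adjacent columns each contain a unique brick linked by an unavoidable cross-column edge). In those cases I would apply targeted braid relations to reshuffle the brick diagram and reveal the desired subtree, in the same spirit as the manipulations carried out in Figures~\ref{m=5}, \ref{m=4_second}, and following in the proof of Proposition~\ref{prop 3-braids}. A finite list of low-crossing, small-strand-count braids (bounded by $N$ and by the minimal crossing number needed to reach genus $5$) will resist all such transformations; these constitute the ``finitely many'' exceptional braids that the statement allows us to exclude.
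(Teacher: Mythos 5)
Your overall strategy coincides with the paper's: take a long induced path in the linking graph running through all $N-1 \geq 10$ columns (hence with at least $10$ vertices), and attach a single extra vertex to obtain an induced tripod containing $E_6$ and spanning a subsurface of genus at least $5$. Two smaller points are fixable: the existence of an induced path visiting every column is not automatic from ``consecutive cross-column patterns'' but is precisely Lemma~\ref{lemma_path}, and the arithmetic (path of $\geq 10$ vertices plus one branch vertex gives a tree of $\geq 11$ vertices, hence genus $\geq 5$) should be stated at a vertex far enough from the ends to actually produce an $E_6$.

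The genuine gap is in your last paragraph. The entire content of the proposition is the claim that the obstructions to attaching the branch vertex while keeping the subgraph induced can always be removed, and your justification --- targeted braid relations will work except for a finite list of low-crossing braids --- is not an argument and, as stated, is not even plausibly finite: for each $N \geq 11$ there are infinitely many prime positive braids on $N$ strands, and nothing you say rules out an infinite family, of arbitrarily large crossing number, on which every attempted branch creates a cycle. Indeed Remark~\ref{remark_links} exhibits infinite families of positive braid \emph{links} for which no $E_6$-subtree could be found, so finiteness cannot follow from the combinatorics of the linking graph alone. What the paper actually proves, through a long case analysis of the local brick configurations around the third column, is that every failure to attach a branch vertex forces one of two outcomes: either two adjacent columns restrict to a path in the linking graph, in which case Lemma~\ref{lemma_braid_index} reduces the number of strands (and the finitely many exceptions are then inherited from Propositions~\ref{prop 3-braids} and~\ref{prop_inter_braid_index}); or the local configuration is one of those of Lemma~\ref{lemma_links}, which forces the closure to have at least two components, contradicting the knot hypothesis. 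Your proposal uses the knot hypothesis only to ensure all columns are populated, whereas its essential role is to exclude these bad local configurations; without these two mechanisms (or substitutes for them) the ``finitely many exceptions'' claim is unsupported.
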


 The strategy to prove Proposition~\ref{prop big braid index} is very simple: we will try to explicitly construct the required subtree and see that, each time our construction fails, either the closure is not a knot or we can reduce the number of strands. The finitely many exceptions come from Prop.~\ref{prop 3-braids} and Prop.~\ref{prop_inter_braid_index}, in case we can reduce our braid to one of the exceptions therein. We will therefore heavily rely on the following two lemmas.

\begin{lem}\label{lemma_braid_index}
	Let $\beta \in B_N^+$ be a prime positive braid on $N\geq 3$ strands. If for some $i$ the linking graph of the subword induced by all the generators $\sigma_i$ and $\sigma_{i+1}$ is a path, then there exists a positive braid $\beta' \in B_{N-1}^+$ such that $\hat{\beta} = \hat{\beta'}$ and $\mathit{MG}(\beta) = \mathit{MG}(\beta')$.
\end{lem}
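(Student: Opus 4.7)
The plan is to exploit the path structure of the induced linking graph on $\{\sigma_i, \sigma_{i+1}\}$-bricks to find a sequence of positive braid moves that simplify $\beta$ to a form admitting an explicit reduction of strand count, then verify that both the closure and the monodromy group are preserved. The key initial observation is that the linking graph of the subword $\beta|_{i,i+1}$ coincides with the induced subgraph of the linking graph of $\beta$ on bricks in columns $i$ and $i+1$, because bricks in these columns depend only on the sequence of $\sigma_i$- and $\sigma_{i+1}$-letters (intervening generators $\sigma_j$ with $j\notin\{i,i+1\}$ do not split those bricks). Thus the hypothesis really is a constraint on the positive $3$-braid $\beta|_{i,i+1}$.

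The next step is a combinatorial normalization. I would analyze positive $3$-braids whose linking graph is a path by a small case analysis (similar in spirit to Proposition~\ref{prop 3-braids}), using the braid relation to bring $\beta|_{i,i+1}$ into a normal form. The aim is to produce a form in which one of the two generators, say $\sigma_{i+1}$, appears at a position allowing a Markov destabilization of $\beta$ on strand $i+2$: concretely, $\beta$ should, after moves preserving $\mathit{MG}(\beta)$ and $\hat{\beta}$ (Propositions~\ref{prop_elem_conj} and~\ref{prop_braid_rel_invariance}), be cyclically conjugate to $\alpha\sigma_{i+1}$ where $\alpha$ does not use $\sigma_{i+1}$. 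Far-commutativity then splits $\alpha=\alpha_L\alpha_R$ with $\alpha_L \in B_{i+1}^+$ and $\alpha_R$ using only $\sigma_j$ for $j\geq i+2$; primality of $\beta$ prevents $\alpha_R$ from obstructing the Markov move, and the fibre-surface isotopy realizing the destabilization absorbs strand $i+2$ into strand $i+1$, producing the desired $\beta'\in B_{N-1}^+$ with $\hat{\beta}=\hat{\beta'}$.

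For the monodromy-group equality, the crucial point is that the isolated single $\sigma_{i+1}$ contributes no brick by itself (a brick in column $i+1$ requires at least two $\sigma_{i+1}$-crossings), so removing it in the Markov destabilization deletes no standard curve. The remaining standard curves on $\Sigma_\beta$ correspond bijectively to those on $\Sigma_{\beta'}$ under the surface isotopy, so the generators of $\mathit{MG}(\beta)$ and $\mathit{MG}(\beta')$ match and the two groups coincide.

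The principal obstacle is the combinatorial normalization. The example $(\sigma_i\sigma_{i+1})^k$ shows that a path linking graph does not directly force either generator to appear only once inside $\beta|_{i,i+1}$, since repeated use of $\sigma_i\sigma_{i+1}\sigma_i=\sigma_{i+1}\sigma_i\sigma_{i+1}$ cannot eliminate all occurrences of $\sigma_{i+1}$ within the subword alone. Hence the normalization must exploit the context of the full braid $\beta$: primality, together with the freedom to slide $\sigma_i,\sigma_{i+1}$ past far-commuting letters $\sigma_j$ with $|j-i|,|j-i-1|\geq 2$. A careful enumeration of possible path shapes, combined with tracking which braid relations lift from $\beta|_{i,i+1}$ to $\beta$, is the central technical difficulty of the proof.
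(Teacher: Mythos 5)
Your overall strategy differs from the paper's: you want to normalize $\beta$ by positive braid moves until $\sigma_{i+1}$ occurs exactly once and then destabilize, whereas the paper normalizes only up to elementary conjugation and symmetry --- bringing the induced subword to the form $\sigma_i^a\sigma_{i+1}\sigma_i\sigma_{i+1}^b$, which is exactly what the path hypothesis gives --- and then performs a single explicit isotopy of the fibre surface (sliding the portion of the $(i+1)$-th disk lying between the first $\sigma_{i+1}$ and the last $\sigma_i$ along that last $\sigma_i$-band and around the back of the disks), which removes a strand in one step with no braid relations at all; the equality of monodromy groups is then a direct check on the standard curves. The difference matters, because the step you defer as ``the central technical difficulty'' is a genuine gap, not a routine verification. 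To absorb the trailing $\sigma_{i+1}$'s via $\sigma_{i+1}\sigma_i\sigma_{i+1}=\sigma_i\sigma_{i+1}\sigma_i$ you must bring the three relevant letters adjacent inside the full word $\beta$; but a letter $\sigma_{i+2}$ sitting between the last $\sigma_i$ and the next $\sigma_{i+1}$ commutes with $\sigma_i$ and not with $\sigma_{i+1}$, while a letter $\sigma_{i-1}$ commutes with $\sigma_{i+1}$ and not with $\sigma_i$, so far-commutativity alone cannot clear the way, and whether cyclic conjugation always can is precisely the kind of conjugacy-in-the-positive-monoid problem that this paper (following Baader--L\"onne) is structured to avoid. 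You correctly identify this obstacle but do not resolve it, and without it the reduction does not go through.

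There is also a structural warning sign for your endpoint: if you did reach a word $\alpha_L\alpha_R\sigma_{i+1}$ with $\sigma_{i+1}$ occurring once and with both $\alpha_L$ and $\alpha_R$ contributing bricks, the linking graph of that word would be disconnected and the closure would be a nontrivial connected sum, contradicting primality; so your normal form can only exist when one side is already trivial, which suggests the reduction would have to cascade through several destabilizations rather than terminate after one. By contrast, the paper's surface isotopy needs no such degeneration: after arranging (by conjugation) that all $\sigma_j$ with $j<i$ occur before the last $\sigma_i$ and all $\sigma_j$ with $j>i+1$ occur after the first $\sigma_{i+1}$, the slid portion of the $(i+1)$-th disk carries no other attached bands, the resulting diagram is again a positive braid on $N-1$ strands, and the standard curves (hence the monodromy groups) are matched by a computation as in Proposition~\ref{prop_elem_conj}. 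Your final observation that an isolated $\sigma_{i+1}$ contributes no brick is correct and would close the argument, but only once the normalization is actually established.
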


\begin{proof}
	Up to elementary conjugation and symmetry, we can assume that the subword induced by $\sigma_i$ and $\sigma_{i+1}$ is of the form $\sigma_i^a\sigma_{i+1}\sigma_i\sigma_{i+1}^b$. Moreover, we can suppose that all the generators $\sigma_j$ for $j<i$ appear before the last occurrence of $\sigma_i$ and all the generators $\sigma_j$ for $j>i+1$ appear after the first occurrence of $\sigma_{i+1}$. In Figure~\ref{figure_braid_index_lemma} we see an isotopy between the fibre surface $\Sigma_{\beta}$ and the fibre surface $\Sigma_{\beta'}$ of a new braid $\beta'$ with one strand less: the portion of the $(i+1)$-th disk lying between the first occurrence of $\sigma_{i+1}$ and the last occurrence of $\sigma_i$ (in red in the leftmost picture) is slid along the last $\sigma_i$, becoming a band between the $i$-th and $(i+1)$-th disk (central image); this band is then slid along the back of the two disks to be brought in the lowermost position.  A direct computation now shows that $\mathit{MG}(\beta) = \mathit{MG}(\beta')$.
	
	\begin{figure}
		\centering
		\def\svgwidth{\columnwidth}
		\import{Figures/}{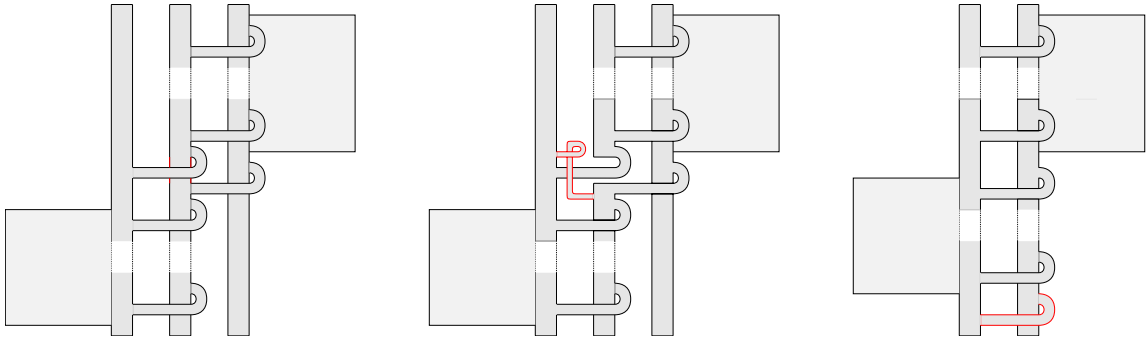}
		\caption{An isotopy that reduces the number of strands.}
		\label{figure_braid_index_lemma}
	\end{figure}

\end{proof}

\begin{lem}\label{lemma_links}
 Let
 \begin{itemize}
 	\item  $A = \{ \sigma_1^a\sigma_2\sigma_3^b\sigma_2\sigma_1^c\sigma_2\sigma_3^d\sigma_2\sigma_1^e \mid a,b,c,d,e \in \mathbb{N} \}$,
 	
 	\item $B = \{ \beta_1\sigma_2\sigma_3\beta_2\sigma_3\sigma_2\beta_3 \mid  \beta_1,\beta_3 \in \langle \sigma_3, \sigma_4 \rangle, \beta_2 \in \langle \sigma_1,\sigma_2 \rangle \}$,
 	
 	\item  $C = \{ \beta_1\sigma_2\beta_2\sigma_2\sigma_3\beta_3\sigma_3\beta_4 \mid \beta_1,\beta_4 \in \langle \sigma_1,\sigma_4 \rangle, \beta_2 \in \langle\sigma_3,\sigma_4\rangle, \beta_3\in\langle\sigma_1\sigma_2\rangle \}$.
 	
\end{itemize}	

If $\beta \in A \cup B \cup C$, then the closure of $\beta$ has at least two components.
\end{lem}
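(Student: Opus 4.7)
The plan is to exhibit, in each of the three families $A$, $B$, $C$, one distinguished strand whose final position coincides with its initial position, regardless of the free parameters. Since the number of components of the closure of a braid equals the number of cycles in the induced permutation in $S_N$, the existence of a single fixed strand already forces the closure to have at least two components.

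For the family $A$, the natural candidate is strand $3$. The powers $\sigma_1^a,\sigma_1^c,\sigma_1^e$ only act on positions $1$ and $2$, and the powers $\sigma_3^b,\sigma_3^d$ only act on positions $3$ and $4$. Tracking strand $3$ through the word $\sigma_1^a\sigma_2\sigma_3^b\sigma_2\sigma_1^c\sigma_2\sigma_3^d\sigma_2\sigma_1^e$ yields the sequence of positions
$$3 \to 3 \to 2 \to 2 \to 3 \to 3 \to 2 \to 2 \to 3 \to 3,$$
so strand $3$ is fixed by the induced permutation.

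For the family $B$, I would track strand $2$: the hypotheses $\beta_1,\beta_3 \in \langle\sigma_3,\sigma_4\rangle$ ensure that these blocks fix positions $1$ and $2$, while $\beta_2 \in \langle\sigma_1,\sigma_2\rangle$ fixes positions $4$ and $5$. One then reads off the trajectory
$$2 \to 2 \to 3 \to 4 \to 4 \to 3 \to 2 \to 2.$$
For the family $C$, strand $3$ again works: the constraints $\beta_1,\beta_4 \in \langle\sigma_1,\sigma_4\rangle$ fix position $3$, $\beta_2 \in \langle\sigma_3,\sigma_4\rangle$ fixes position $2$, and $\beta_3$ (lying in the parabolic subgroup generated by $\sigma_1,\sigma_2$) fixes position $4$, giving
$$3 \to 3 \to 2 \to 2 \to 3 \to 4 \to 4 \to 3 \to 3.$$

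The whole proof is a direct verification: the placement of the generators $\sigma_2,\sigma_3$ and the parabolic constraints on the free factors $\beta_i$ are designed precisely so that one strand becomes decoupled from the scrambling produced by the rest of the word. No real obstacle arises; the content is the observation that such a fixed strand exists and can be located by inspection of the defining word of each family.
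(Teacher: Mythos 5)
Your proof is correct, and it is essentially the paper's argument made explicit: the paper's proof consists of highlighting, in a figure, one component of each closure that uses only a proper subset of the strands, and that highlighted component is exactly the closure of the fixed strand you locate by tracking the induced permutation. The parabolic constraints on the free blocks guarantee the trajectory is parameter-independent, just as you observe.
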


\begin{proof}
	In Figure~\ref{figure_lemma_links} we see some schematic drawings of the linking diagrams of braids from the three families, in which one component of the closure is highlighted.
	
	\begin{figure}
		\centering
		\def\svgwidth{.7\columnwidth}
		\import{Figures/}{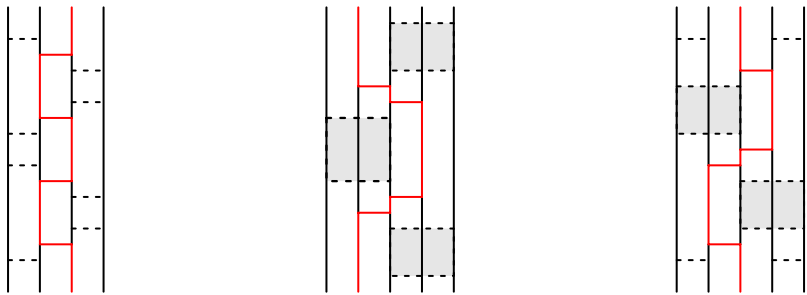}
		\caption{Some positive braids with disconnected closure.}
		\label{figure_lemma_links}
	\end{figure}

\end{proof}

Notice that, even though for sake of simplicity we only stated Lemma~\ref{lemma_links} for braids with few strands, the result clearly also applies in case some columns of the brick diagram of a braid on more strands exactly look as in Figure~\ref{figure_lemma_links} (or are symmetric to those).

To construct the trees required in Proposition~\ref{prop big braid index}, we will also need the following lemma from \cite{liechti_genus_2020}.

\begin{lem}[\cite{liechti_genus_2020}, Lemma $7$]\label{lemma_path}
	Let $\beta $ be a prime positive braid and $v$ be a vertex of its linking graph. Then there is an induced path in the linking graph connecting $v$ to any other column of the brick diagram.
\end{lem}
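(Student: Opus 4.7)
The strategy rests on two elementary ingredients: the connectedness of the linking graph (which characterizes primality of positive braid links by \cite{cromwell_positive_1993} and is used throughout the paper), and the standard graph-theoretic fact that a shortest walk between two vertices is automatically an induced path.

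Concretely, I would proceed as follows. Fix $v$ and let $c$ be any column of the brick diagram different from the one containing $v$. First I would observe that column $c$ contains at least one vertex $w$ of the linking graph: a column whose only contribution to the braid word is a single isolated $\sigma_i$ with no accompanying brick can either be removed by a Markov-type destabilization or forces a splitting of $\hat{\beta}$, contradicting primality. Next, since the linking graph is connected by primality, there is at least one walk from $v$ to $w$. Among all such walks, choose one $P = v_0 v_1 \cdots v_k$ of minimal length, with $v_0 = v$ and $v_k = w$. If $P$ admitted any chord $v_i v_j$ with $j - i \geq 2$, replacing the subpath $v_i \cdots v_j$ by the single edge $v_i v_j$ would yield a strictly shorter walk from $v$ to $w$, contradicting the minimality of $P$. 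Hence $P$ is an induced path, which provides the desired connection from $v$ to a vertex in column $c$.

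The main obstacle is the seemingly minor first step: verifying that every relevant column does in fact carry a vertex of the linking graph. While this is intuitively clear — a prime braid word cannot afford a column whose generator appears in an essentially trivial way — making the claim precise requires a short combinatorial case analysis on the local appearance of $\sigma_i$ inside the brick diagram, and/or a natural convention declaring that a ``column'' in the statement refers only to those carrying at least one brick. Once this is dispatched, the proof is reduced to the shortest-path principle above, which is entirely routine.
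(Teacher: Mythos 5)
Your argument is correct for the statement as literally given: primality of the closure is equivalent to connectedness of the linking graph (by visual primeness of positive braid links, \cite{cromwell_positive_1993}), and a shortest path between two vertices of a graph is automatically induced, since any chord would shorten it. Your worry about empty columns is also legitimate but easily dispatched: a generator $\sigma_i$ occurring zero times splits the braid, and one occurring exactly once exhibits the fibre surface as a boundary connected sum, so primality (together with working with a braid that is reduced under destabilization) guarantees every column carries a brick. However, this is a genuinely different route from the one taken in the paper, which follows \cite{liechti_genus_2020} and \emph{constructs} the path by an explicit greedy algorithm: within the current column, move to the closest brick that is linked to the next column (choosing, among several candidates, the one closest to a brick of the same column again linked to the right), then step into the next column and repeat. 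The trade-off matters downstream. Your shortest-path argument is shorter and purely graph-theoretic, but it yields an induced path with no further structure. The paper's algorithm produces a path with strong combinatorial guarantees --- it progresses monotonically through the columns, uses at most two (necessarily adjacent) bricks per column, and leaves identifiable ``free'' bricks above and below it --- and it is precisely these features, not mere existence, that are exploited throughout the case analysis in the proof of Proposition~\ref{prop big braid index} (e.g.\ the repeated assertions that ``either $v_3$ and $w_3$ are adjacent or they coincide'' or that a leftover brick ``is not linked to the right at all''). So while your proof establishes the lemma, it would not suffice as a drop-in replacement for the construction the rest of the paper relies on.
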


We will briefly recall the algorithm for constructing such a path, since this will be used in what follows. Let us say that we want to connect $v$ to a column to its right. Start at $v$ and move up or down its column until reaching the closest brick linked to the right (potentially, already $v$). Now, move to the right and repeat the procedure. If at the moment of moving to the right there are several possibilities, choose the brick which is the closest to a brick in the same column linked again to its right. It is easy to see that those choices prevent the creation of cycles, so that the result will be a path.

\begin{proof}[Proof of Proposition \ref{prop big braid index}]
	Let $\beta$ be a prime positive braid on $N\geq 11$ strands. By Lemma \ref{lemma_braid_index} we can assume that, for every pair of adjacent columns in the brick diagram, the linking graph restricted to those columns is not a path. Let us furthermore repeatedly apply all the possible braid relations of the form $\sigma_i \sigma_{i+1} \sigma_i \rightsquigarrow \sigma_{i+1}\sigma_i\sigma_{i+1}$, until no subword $\sigma_i \sigma_{i+1} \sigma_i$ is left in $\beta$. Our strategy goes as follows: we will start considering an induced path connecting the leftmost column to the rightmost, constructed with the previous algorithm, and try to add to it one single vertex, in order to get a tripod tree containing $E_6$. Since $b\geq 11$, the tripod tree will have at least $11$ vertices and hence correspond to a subsurface of genus at least $5$. So, let us fix one such path and look at the third column of the brick diagram. If we can add a brick of this column to the path and get an (induced) tripod tree we are done. There are two reasons why this might not be possible: either because there are no leftover bricks in the third column or because every available brick is linked to more than one brick of the path and adding it would generate a cycle. We will now analyse those cases in detail.
	By symmetry, we can assume that in the third column our path arrives from the left to a brick $v_3$, potentially moves \textit{down} to a brick $w_3$ and then continues to the right.
	
	\bigskip 
	
	If there are no leftover bricks in the third column, then by the construction rule of our paths we know that $w_3$ is the only brick of column $3$ linked to the right. We can now apply elementary conjugations on the right-hand side of the diagram in order to have all the generators $\sigma_i$ for $i\geq 4$ appear before the last occurrence of $\sigma_3$, and perform again all the possible braid moves $\sigma_i \sigma_{i+1} \sigma_i \rightsquigarrow \sigma_{i+1}\sigma_i\sigma_{i+1}$. Those transformations will not affect the first $3$ columns and the part of the path therein. We now get that the sub-braid generated by $\sigma_3$ and $\sigma_4$ is $\sigma_4^a\sigma_3\sigma_4^b\sigma_3^c$, with $c \geq 1$ and $a,b \geq 2$ by Lemma~\ref{lemma_braid_index}. Let us denote by $v_4$ the only brick of column $4$ linked to $w_3$, and let us attach a path connecting $v_4$ to the rightmost column.
		
	If at least one of the bricks immediately above or below $v_4$ is not linked to the portion of the path in the fifth column (in particular, if $v_4$ is itself linked to the right), it can safely be added to get a tripod tree. We directly see that we are left with the case of Figure~\ref{Nobricks}. Notice that, up to modifying the path in the fourth and fifth columns, we can always choose whether it passes by $x$ or $y$. Now, if there is a brick $x'$ above $x$, either it is not linked to the path in the fifth column, in what case we can directly connect it to $x$, or it is, in what case we can change our path to $w_3\rightarrow v_4\rightarrow x \rightarrow x'\rightarrow \{\textit{path in the fifth column}\}$ (thus avoiding $v_5$) and connect $y$ to $v_4$. Similarly, we can assume that there is no brick below $y$.
		
	Let us now consider the fifth column. Notice that there must be at least one brick immediately above and one immediately below $v_5$ that are not linked to the fourth column, otherwise we could apply one of the forbidden braid relations. By applying the same reasoning as before, we conclude that we can always obtain a tripod tree, unless there are no other bricks in the column. In the latter case, however, the closure of the braid is not a knot by Lemma~\ref{lemma_links} (compare with the leftmost diagram of Figure~\ref{figure_lemma_links}).
		
	\begin{figure}[h]
		\centering
		\def\svgwidth{.16\columnwidth}			\import{Figures/big_braid_index}{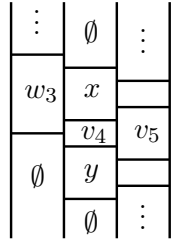}
		\caption{We only show the columns $3-5$. The path goes through $w_3$, $v_4$, either $x$ or $y$ and $v_5$.}		\label{Nobricks}
	\end{figure}
	
	\bigskip
	
	We can now suppose that there are some leftover bricks in the third column, but adding any of them to our path creates a cycle. The idea is analogous to what we just did: we will try to locally "reconstruct" the linking graph, successively exclude all the cases where we can find the required tripod and see that in the end we are left with one of the links from Lemma~\ref{lemma_links}. However, the analysis gets much more delicate and will need lengthy case distinctions to cover the various ways adjacent columns can be connected. First of all, in the third column there could be bricks left both above and below the path, only above or only below.
	
	\begin{enumerate}
		\item\label{case_ab-bel} If there are bricks above $v_3$ and below $w_3$, we will be in one of the two cases of Figure~\ref{Ab-Bel}. 
		
		\begin{figure}[h]
			\centering
			\def\svgwidth{.5\columnwidth}			\import{Figures/big_braid_index}{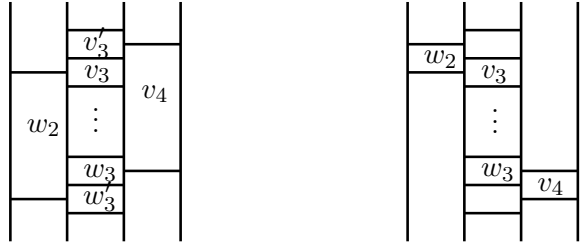}
			\caption{In both cases the path arrives from $w_2$, moves to $v_3$, then goes down to $w_3$ and finally to $v_4$.}
			\label{Ab-Bel}
		\end{figure}
	
		\begin{enumerate}
			\item\label{case_ab-bel_A} In the left-hand case of Figure~\ref{Ab-Bel}, recalling that the path was constructed with the algorithm of Lemma~\ref{lemma_path}, we know that either $v_3$ and $w_3$ are adjacent or they coincide. We will analyse those cases in great detail, since they serve as example of the kind of reasoning applied also to the rest of the proof.
			
			\begin{enumerate}
				\item\label{case_ab-bel_A_a} If $v_3$ and $w_3$ are distinct and adjacent, as in the left of Figure~\ref{Ab-Bel_Aa}, again by the construction rule of our paths we know that $w_3'$ is not linked to the right at all and $v_3'$ is not linked to the path to the left. Now, if $v_3'$ is linked to the path to the right above $v_4$, we could change our path to $w_2 \rightarrow v_3 \rightarrow v_3' \rightarrow \{\textit{path in the fourth column}\}$, thus avoiding $v_4$, and connect $w_3$ to $v_3$ to get a tripod (see center of Figure~\ref{Ab-Bel_Aa}). Otherwise, we can instead consider $w_2 \rightarrow w_3' \rightarrow w_3 \rightarrow v_4 \rightarrow \{\textit{path}\}$ and connect $v_3'$ to $v_4$ (right of Figure~\ref{Ab-Bel_Aa}).
				
				\begin{figure}[h]
					\centering
					\def\svgwidth{\columnwidth}			\import{Figures/big_braid_index}{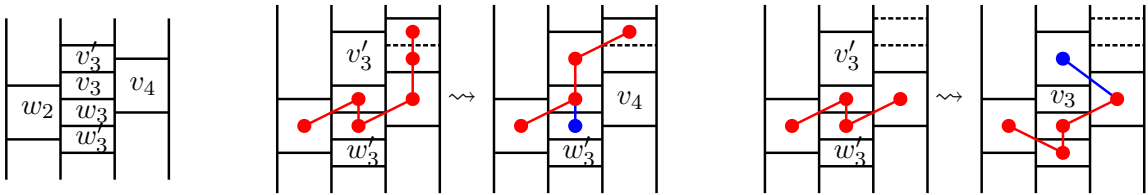}
					\caption{Diagrams for case~\ref{case_ab-bel_A_a}; in each case, we drew on the left the original path, on the right the modified path with in blue the isolated vertex of the tripod.}
					\label{Ab-Bel_Aa}
				\end{figure}
				
				\item\label{case_ab-bel_A_b} If $v_3 = w_3$, then we know that $w_2$ has to be linked to the first column, otherwise we could perform one of the forbidden braid relations. We will further distinguish according to how $w_2$ is linked to the first column.
				
				\begin{enumerate}
							
				\item\label{case_ab-bel_A_b1} Let us suppose first that $w_2$ is linked to a brick $v_1$ below it, as in the left-hand side of Figure~\ref{Ab-Bel_Ab1}. Notice that the brick denoted by $w_2'$ needs to exist because of the condition on the possible braid relations. Hence, we can assume that in the first column there are at most two bricks, both linked with $w_2$, and that the brick immediately below $w_2'$ (if any) is linked with $v_1$, otherwise we could immediately find an appropriate tripod, as shown in Figure~\ref{Ab-Bel_Ab1}.
				
				\begin{figure}[h]
					\centering
					\def\svgwidth{\columnwidth}			\import{Figures/big_braid_index}{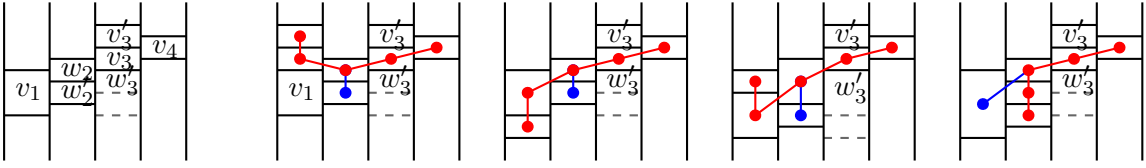}
					\caption{First diagrams for case~\ref{case_ab-bel_A_b1}. In the third column, the brick $w_3'$ is linked to $w_2$ and may or may not be linked to $w_2'$.}
					\label{Ab-Bel_Ab1}
				\end{figure}

				We are therefore left we the diagram on the left-hand side of Figure~\ref{Ab-Bel_Ab1_2}. If there is a linking between the second and third columns above $v_3$, we could modify our path by starting from $v_1$ and $w_2$, then moving upwards in the second column until we reach the first connection with the third column above $v_3$ and finally going down on the third column until the first connection to the original path in the fourth column (which occurs at the latest at $v_3'$). This will give us a path avoiding $v_3$. We can now safely connect $w_3'$ to $w_2$ and get a tripod. If not, up to elementary conjugations on the first two columns, we can suppose that there are no bricks in the second column above $v_3$, as in the central picture of Figure~\ref{Ab-Bel_Ab1_2}. In this case, we can assume that above $w_2$ there is at most one brick. Now, if in the first column there are two bricks, again by elementary conjugation we are back to the case where there is a brick below $v_1$ and we are done. We are hence left with just one brick in the first column, as in the right-hand side of Figure~\ref{Ab-Bel_Ab1_2}. Notice that in this case the brick $w_3'$ is forced to be linked to $w_2'$, otherwise the closure of the braid is not a knot by the second case of Lemma~\ref{lemma_links}. This in turn forces the existence of the brick denoted by $b$ below $w_3'$, otherwise we could apply a forbidden braid relation. If there is a brick $a$ below $w_2'$, we can consider $v_1 \rightarrow a \rightarrow w_2' \rightarrow w_3' \rightarrow v_3 \rightarrow \{\textit{path}\} $ and connect $b$ to $w_3'$. On the other hand, if there are no bricks below $w_2'$ we see that either the closure of the braid is not a knot, if there is a brick above $w_2$ (third case of Lemma~\ref{lemma_links}), or we can reduce the number of strands with Lemma~\ref{lemma_braid_index}.

				\begin{figure}[h]
					\centering
					\def\svgwidth{.8\columnwidth}			\import{Figures/big_braid_index}{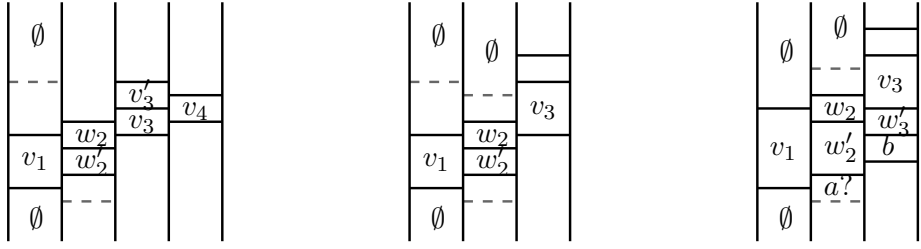}
					\caption{Further diagrams for case~\ref{case_ab-bel_A_b1}. The dashed lines show where the following brick (if existing) would be. In the third column, there is still a brick $w_3'$ below $v_3$ as in Figure~\ref{Ab-Bel}, which is linked to $w_2$ and may or may not be linked to $w_2'$.}
					\label{Ab-Bel_Ab1_2}
				\end{figure}

				\item\label{case_ab-bel_A_b2} We can now suppose that $w_2$ is linked to a brick $v_1$ above it, but is not linked with any brick of the first column below it, as in the leftmost image of Figure~\ref{Ab-Bel_Ab2}. If there are at least two bricks below $w_2$ we immediately find a tripod. If there is exactly one brick below $w_2$, we can furthermore assume that $v_1$ is the only brick in the first column. Let us now consider how $v_1$ is connected with the second column. If it is only linked to $w_2$, by applying an elementary conjugation we are back case~\ref{case_ab-bel_A_b1}, where $v_1$ was below $w_2$. Notice that the existence of a brick below $w_2$ ensures that the condition about the possible braid relations is still satisfied after the conjugation. If $v_1$ is linked to another brick of the second column above $w_2$, called $a$, and $a$ is below $v_3'$, as in the second image of Figure~\ref{Ab-Bel_Ab2}, we immediately see that either we find a suitable tripod or the closure is not a knot, depending on how many bricks there are in the second column between $w_2$ and $a$ (there is at least one by the condition on braid relations; if it is unique, we fall in the second case of Lemma~\ref{lemma_links}, else we find a tripod). Finally, if $a$ is above $v_3'$ or linked to it, as in the two right-hand side images of Figure~\ref{Ab-Bel_Ab2}, we know that there is a brick between $a$ and $w_2$ linked to $v_3$ (potentially, this could be $a$). We can now consider $v_1 \rightarrow a \rightarrow \{\textit{second column}\} \rightarrow v_3 \rightarrow \{\textit{path}\}$, thus avoiding $w_2$, and connect $w_3'$ to $v_3$.
				
				\begin{figure}[h]
					\centering
					\def\svgwidth{\columnwidth}			\import{Figures/big_braid_index}{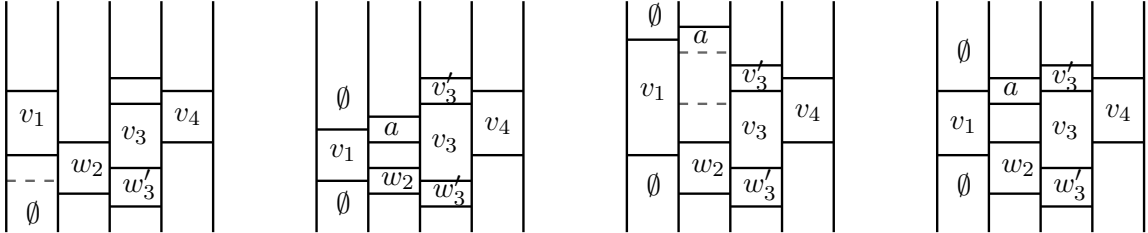}
					\caption{Diagrams for case \ref{case_ab-bel_A_b2}. In the second column, there is at least one brick above $w_2$ but below $v_1$.}
					\label{Ab-Bel_Ab2}
				\end{figure}
			
				The only case left now is when there are no bricks below $w_2$. Again, if $v_1$ is linked to another brick $a$ of the second column above $w_2$ the exact same argument as before applies. If $v_1$ is linked only to $w_2$, this time we cannot simply apply an elementary conjugation to reduce to a previously treated case. However, if there are no bricks above $v_1$ (resp. below $v_1$) we could apply Lemma~\ref{lemma_braid_index}, whilst if there are bricks in the first column both above and below $v_1$ it is immediate to conclude that either we find a tripod or the closure is not a knot, as in the first case of Lemma~\ref{lemma_links}.

				\end{enumerate}
			
			\end{enumerate}
		
			\item\label{case_ab-bel_B} In the right-hand case of Figure~\ref{Ab-Bel}, we know that $w_2$ needs to be linked to a brick $v_1$ in the first column. Again, we will separately consider whether $v_1$ is above or below $w_2$.
			
			\begin{enumerate}
				\item\label{case_ab-bel_B_a} Suppose first that $w_2$ is linked to a brick $v_1$ above it, as depicted in the left of Figure~\ref{Ab-Bel_Ba1}. Note that the brick denoted by $v_2$ must exist, otherwise we could perform a forbidden braid relation. By excluding all the cases where one can immediately find a tripod, we are left with at most two bricks in the first column, both linked to $w_2$, and we know that the brick above $v_2$ (if any) is linked to $v_1$, see Figure~\ref{Ab-Bel_Ba1}.
				
				\begin{figure}[h]
					\centering
					\def\svgwidth{.9\columnwidth}			\import{Figures/big_braid_index}{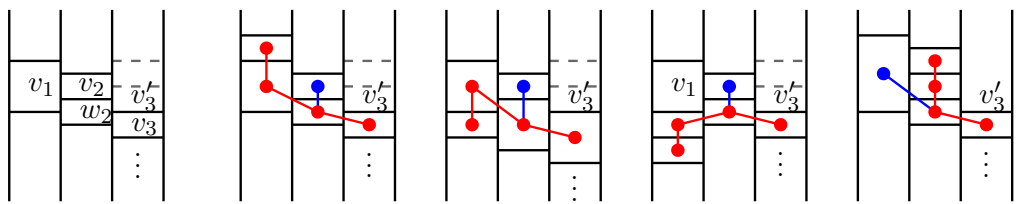}
					\caption{First diagrams for case~\ref{case_ab-bel_B_a}. The dashed lines show where the brick $v_3'$ could end.}
					\label{Ab-Bel_Ba1}
				\end{figure}
			
				We hence can reduce the study to one of the cases in the left-hand side of Figure~\ref{Ab-Bel_Ba}. If there are two bricks in the first column, we either have a brick above $v_2$, in which case we can find a tripod by simply starting our path from $v_1'$ and adding two bricks above $w_2$, or we can apply an elementary conjugation to the first column to get a brick below $v_1'$, which again immediately gives a tripod. If in the first column there is just one brick, we know that $v_2$ needs to be linked to the third column, otherwise the closure is not a knot by Lemma~\ref{lemma_links} (second case). Thus, we can now suppose that there are no bricks above $v_2$, otherwise we immediately find a tripod, so we are left with the diagram on the right-hand side of Figure~\ref{Ab-Bel_Ba}. Notice that now by Lemma~\ref{lemma_braid_index} there needs to be at least one brick below $w_2$, otherwise we can reduce the number of strands. If none of the bricks below $w_2$ is linked to $v_3$, we see that according to the number of those bricks we either get a tripod or the closure is not a knot by (a symmetry of) the third case in Lemma~\ref{lemma_links}. Hence we can suppose that there is a brick $w_2'$ below $w_2$ linked to $v_3$. If $w_2'$ is connected to the original path in the third column below $v_3$, we can instead consider $v_1 \rightarrow w_2 \rightarrow \cdots \rightarrow w_2' \rightarrow \{\textit{path}\}$ and get a tripod by connecting to $w_2$ the bricks $v_3'$ and $v_3''$. If not, we can simply take our original path starting from $v_3$ and connect to it $w_2'$, $v_3'$ and $v_3''$.
		
				\begin{figure}[h]
					\centering
					\def\svgwidth{.8\columnwidth}			\import{Figures/big_braid_index}{Ab-Bel_case_Ba.pdf_tex}
					\caption{Additional diagrams for case~\ref{case_ab-bel_B_a}.}
					\label{Ab-Bel_Ba}
				\end{figure}
	
				\item\label{case_ab-bel_B_b} Suppose now that $w_2$ is only linked to a brick $v_1$ below it, as in the leftmost image of Figure~\ref{Ab-Bel_Bb1}; note that, as depicted, there must exist one brick immediately below $w_2$ not linked to $v_1$, otherwise we could perform a braid relation. First, we immediately see that there can be at most one brick above $w_2$, and if this brick exists then $v_1$ is the only brick of the first column, otherwise we easily find a tripod. After excluding the additional easy cases shown in Figure~\ref{Ab-Bel_Bb1}, we are left with the diagrams of Figure~\ref{Ab-Bel_Bb}: that is, there must exist a brick $w_2'$ below $w_2$ which is linked to $v_3$ but above $v_1$.

				\begin{figure}[h!]
					\centering
					\def\svgwidth{\columnwidth}			\import{Figures/big_braid_index}{Ab-Bel_case_Bb1.pdf_tex}
					\caption{First diagrams for case~\ref{case_ab-bel_B_b}.}
					\label{Ab-Bel_Bb1}
				\end{figure}

				First, if $w_2'$ is linked to the path in the third column below $v_3$, we can take $v_1 \rightarrow w_2 \rightarrow \cdots \rightarrow w_2' \rightarrow \textit{path}$ and add to it a brick in the third column (which will be at most $v_3''$). Otherwise, if $w_2'$ is not connected to the path and there is a brick $w_2''$ below it, we can simply take our original path from $v_3$ and add to it $v_3'$, $w_2'$ and $w_2''$. Finally, let's assume that there are no bricks below $w_2'$. If there is a brick above $w_2$ we can apply an elementary conjugation to the first column and get back to the previous case. If not, Lemma~\ref{lemma_braid_index} forces the existence of bricks above and below $v_1$, in which case either we get a tripod or the closure is not a knot, as in the first case of Lemma~\ref{lemma_links}.
			
				\begin{figure}[h]
					\centering
					\def\svgwidth{.45\columnwidth}			\import{Figures/big_braid_index}{Ab-Bel_case_Bb.pdf_tex}
					\caption{Additional diagrams for case~\ref{case_ab-bel_B_b}.}
					\label{Ab-Bel_Bb}
				\end{figure}
			
			\end{enumerate}		
				
		\end{enumerate}
	
		\item\label{case_above} Let us now consider the case where there is at least one free brick $v_3'$ above $v_3$, but none below $w_3$. First of all, if after $w_3$ our path moves to a brick $v_4$ of the fourth column which is below it, we are basically in the same situation as Case~\ref{case_ab-bel_B}, and the precise same arguments apply. We can hence suppose that the path moves upwards in the fourth column. We will now treat different cases according to how $v_3'$ is linked to the neighbouring columns.
	
		\begin{enumerate}
			\item\label{case_above_A} If $v_3'$ is not linked to the right, we know that it needs to be linked to a brick $w_2$ in the second column, which in turns needs to be linked to a brick $v_1$ in the first column.
			\begin{enumerate}
				\item\label{case_above_A_a}	Let us suppose first that $v_1$ is above $w_2$, as in the leftmost image of Figure~\ref{Ab_A}. Note that we are in a situation similar to Case~\ref{case_ab-bel_B_a}, with the only difference that now the bricks above $v_3'$ could potentially be linked to the path in the fourth column; in particular, all the arguments therein still apply to the current situation, as long as they do not involve the bricks above $v_3'$. Hence, by Case~\ref{case_ab-bel_B_a}, we can suppose that there is only one brick in the first column, as in the central image of Figure~\ref{Ab_A}. Furthermore, if the brick $v_3''$ is not linked to its right, all the arguments from Case~\ref{case_ab-bel_B_a} still apply. We are then left with the rightmost diagram of Figure~\ref{Ab_A}. Now, if $v_3''$ is not linked to the path above $v_4$ it can directly be added as additional vertex, otherwise we can instead consider the path $v_1 \rightarrow w_2 \rightarrow v_3' \rightarrow v_3'' \rightarrow \{\textit{path}\}$ and add a brick to this new path in the fourth column.
				
				\begin{figure}[h]
					\centering
					\def\svgwidth{.8\columnwidth}			\import{Figures/big_braid_index}{Ab_case_A.pdf_tex}
					\caption{Diagrams for Case~\ref{case_above_A}.}
					\label{Ab_A}
				\end{figure}
			
				\item\label{case_above_A_b}	If $v_1$ is below $w_2$, we are in a situation analogous to Case~\ref{case_ab-bel_B_b}, and in fact all the arguments therein still apply to the current setting, as we never made use of the bricks of the third column above $v_3'$. 
			\end{enumerate}

			\item\label{case_above_B} If $v_3'$ is linked to the right (to $v_4$) and to the left (to a brick $w_2$), by the construction rules of the path we know that either $v_3$ and and $w_3$ are adjacent or they coincide, and by the assumption on the braid relations $w_2$ is linked to a brick $v_1$ in the first column.
			
			\begin{enumerate}
				\item\label{case_above_B_a} If $v_1$ is above $w_2$, after repeating the arguments of Case~\ref{case_ab-bel_B_a} we can suppose that there is only one brick in the first column, so we are left with the two diagrams of Figure~\ref{Ab_Ba}.

				\begin{enumerate}
					\item\label{case_above_B_a1} Let us first consider the case where $v_3$ and $w_3$ are distinct and adjacent, as on the left of Figure~\ref{Ab_Ba}. If $v_3'$ is not linked to the path above $v_4$, we can simply consider $v_1 \rightarrow w_2 \rightarrow v_3' \rightarrow v_4 \rightarrow \{\textit{path}\}$ and add $v_3''$ (notice that this would also work if $v_3$ and $w_3$ did coincide). If $v_3'$ is linked to the path in the fourth column above $v_4$, take instead $v_2 \rightarrow v_3' \rightarrow \{\textit{path}\}$ and add $v_3$ and $w_3$. 
					
					\begin{figure}[h]
						\centering
						\def\svgwidth{.6\columnwidth}			\import{Figures/big_braid_index}{Ab_case_Ba.pdf_tex}
						\caption{Diagrams for Case~\ref{case_above_B_a}.}
						\label{Ab_Ba}
					\end{figure}
					
					\item Suppose now that $v_3$ and $w_3$ coincide, as on the right of Figure~\ref{Ab_Ba}. In this case, notice that no brick below $w_2$ can be linked to $v_3$ (otherwise we could perform a forbidden braid relation), and that therefore if there are at least two bricks below $w_2$ we immediately get a tripod. It follows that there needs to be a brick $v_2'$ above $v_2$, otherwise either we can apply Lemma~\ref{lemma_braid_index} (if there are no bricks below $w_2$) or the closure is not a knot, as in the third case of Lemma~\ref{lemma_links} (if there is exactly one brick below $w_2$). Now, if $v_3'$ is not linked to the path in the fourth column above $v_4$, we can find the same tripod as in Case~\ref{case_above_B_a1}. If $v_3'$ is linked to the path above $v_4$, we can instead consider $v_2' \rightarrow v_2 \rightarrow v_3' \rightarrow \{\textit{path}\}$ and add $v_3$.
					
				\end{enumerate}

				\item\label{case_above_B_b} Finally, if $v_1$ is below $w_2$, after repeating the arguments of Case~\ref{case_ab-bel_B_b} we are left with one of the diagrams of Figure~\ref{Ab_Bb}. Note that the case where $v_3$ and $w_3$ coincide is excluded by the condition on the braid relations. Furthermore, again by what was done in Case~\ref{case_ab-bel_B_b}, we know that we can assume the existence of a brick $w_2''$ below $w_2'$. Hence, if $v_3'$ is not connected to the path above $v_4$ we can take $v_1 \rightarrow w_2 \rightarrow v_3' \rightarrow v_4 \rightarrow \{\textit{path}\}$ and add $w_3$, if $v_3'$ is connected to the path above $v_4$ we can instead take $w_2'' \rightarrow w_2' \rightarrow v_3 \rightarrow v_3' \rightarrow \{\textit{path}\}$ and add $w_3$.
				
				\begin{figure}[h]
					\centering
					\def\svgwidth{.6\columnwidth}			\import{Figures/big_braid_index}{Ab_case_Bb.pdf_tex}
					\caption{Diagrams for Case~\ref{case_above_B_b}.}
					\label{Ab_Bb}
				\end{figure}
			
			\end{enumerate}
			
			\item\label{case_above_C} If $v_3'$ is not linked to the left, then it must be linked to the right to $v_4$. It follows that either $v_3$ and $w_3$ are adjacent or they coincide, as in Figure~\ref{Ab_C1}. In both cases, if $v_3'$ is connected to the path above $v_4$, we can simply let our path pass by $v_3'$ instead of $w_3$ (thus skipping $v_4$) and add a brick in the fourth column (which will be at most $v_4'$).
			
			\begin{figure}[h]
				\centering
				\def\svgwidth{.4\columnwidth}			\import{Figures/big_braid_index}{Ab_case_C1.pdf_tex}
				\caption{Diagrams for Case~\ref{case_above_C}; $v_3$ is linked to the second column, but $v_3'$ is not.}
				\label{Ab_C1}
			\end{figure}	
			
			Suppose now that $v_3'$ is not connected to the path above $v_4$ and $v_3$,$w_3$ are distinct. If $w_3$ is linked to the left we are in the situation at the left-hand side of Figure~\ref{Ab_C2} and we directly find a tripod by considering $v_1 \rightarrow w_2 \rightarrow w_3 \rightarrow v_4 \rightarrow \{\textit{path}\}$ and adding $v_3'$. If not, we are in the situation at the right-hand side of  Figure~\ref{Ab_C2}. Note that this is analogous to Figure~\ref{Nobricks}, and the same arguments discussed there apply to the current setting.
			
			\begin{figure}[h]
				\centering
				\def\svgwidth{.45\columnwidth}			\import{Figures/big_braid_index}{Ab_case_C2.pdf_tex}
				\caption{Diagrams for Case~\ref{case_above_C}. On the left, we know that $w_2$ needs to be linked to some brick $v_1$ in the first column.}
				\label{Ab_C2}
			\end{figure}

			We are left with the case where $v_3$ and $w_3$ coincide and $v_3'$ is not connected to the path above $v_4$. We will now consider how the third and second column are connected.
			
			\begin{enumerate}
				\item\label{case_above_C_a} Let us suppose first that there is a brick $v_2$ in the second column below $v_3$. We know that $v_2$ needs to be linked to a brick in the first column, otherwise we could perform a forbidden braid relation.
				
				\begin{enumerate}
					\item\label{case_above_C_a1} If there is a brick $v_1$ in the first column above $v_2$, we are in one of the situations in the left of Figure~\ref{Ab_Ca1}. In both cases, we can assume that $v_1$ is the only brick of the first column linked to $v_2$, otherwise we find a tripod after elementary conjugation, as shown in the right of the figure. Moreover, in the leftmost case we now directly see that either we find a tripod (if there is at least another brick in the first column) or the closure is not a knot by Lemma~\ref{lemma_links}.
					
					\begin{figure}[h]
						\centering
						\def\svgwidth{\columnwidth}			\import{Figures/big_braid_index}{Ab_case_Ca1.pdf_tex}
						\caption{Diagrams for Case~\ref{case_above_C_a1}, when $v_3$ and $w_3$ coincide and there is a brick $v_2$ below $v_3$.}
						\label{Ab_Ca1}
					\end{figure}
				
					Let us now focus on the second image of Figure~\ref{Ab_Ca1}. First of all, using Lemma~\ref{lemma_braid_index} we deduce that there must be a brick in the second column above $v_1$, as shown in the left of Figure~\ref{Ab_Ca1_2}. Note that we only drew the "extremal" cases; in all the others (having either more bricks below $v_2$ or more bricks in the first column), one can easily find a tripod. By excluding additional direct cases, we end up with the diagram on the right-hand side of Figure~\ref{Ab_Ca1_2}: indeed, we can assume that there is no brick below $v_2$, otherwise by elementary conjugations we would get two bricks above $v_2''$ and would find a tripod by taking $\{\textit{second column}\} \rightarrow \cdots \rightarrow v_2' \rightarrow v_3 \rightarrow \{\textit{path}\}$ and adding $v_1$. With similar arguments we can conclude there are no bricks in the second column above $v_2''$ and $v_1$ is the only brick of the first column. Finally, we now see that there needs to be a brick in the third column above $v_2''$, otherwise the closure is not a knot by the second case of Lemma~\ref{lemma_links}. If there are at least two bricks of the third column above $v_2'$, we get a tripod by taking $\{\textit{third column}\} \rightarrow \cdots \rightarrow v_3' \rightarrow v_4 \rightarrow \{\textit{path}\}$ and adding $v_2'$ and $v_2$. Otherwise, we can consider $v_1 \rightarrow v_2'' \rightarrow \cdots \rightarrow v_2' \rightarrow v_3 \rightarrow \{\textit{path}\}$ and add the other brick in the third column linked to $v_2'$, which now we know will not be linked to any other brick of the second column (it is also useful to remember that, as stated at the beginning of Case~\ref{case_above_C_a}, $v_3'$, and hence all the bricks of the third column above it, is not connected to the path in the fourth column above $v_4$).
				
					\begin{figure}[h]
						\centering
						\def\svgwidth{.8\columnwidth}			\import{Figures/big_braid_index}{Ab_case_Ca1_2.pdf_tex}
						\caption{Additional diagrams for Case~\ref{case_above_C_a1}.}
						\label{Ab_Ca1_2}
					\end{figure}
				
					\item\label{case_above_C_a2} If there are no bricks in the first column above $v_2$, but $v_2$ is linked to a brick $v_1$ below it, as in the left of Figure \ref{Ab_Ca2}, we can directly conclude that, depending on the number of bricks in the first column, either the closure is not a knot by Lemma~\ref{lemma_links} or we find an appropriate tripod.

					\begin{figure}[h]
						\centering
						\def\svgwidth{\columnwidth}			\import{Figures/big_braid_index}{Ab_case_Ca2.pdf_tex}
						\caption{Diagrams for Case~\ref{case_above_C_a2}.}
						\label{Ab_Ca2}
					\end{figure}
					
				\end{enumerate}

				\item\label{case_above_C_b} Suppose now that there are no bricks in the second column below $v_3$, which is therefore only linked to a brick $v_2$ above it.
				
				\begin{enumerate}
					\item\label{case_above_C_b1}If in the second column there are bricks both above and below $v_2$, noticing that if there are at least four bricks in the second column we are done, we are only left with the cases of Figure~\ref{Ab_Cb1}. For the leftmost diagram, if there is only one brick in the first column the result is not a knot by Lemma~\ref{lemma_links}, otherwise up to elementary conjugation we get a tripod. In the two central diagrams we directly find a tripod. In the rightmost diagram, if $v_2'$ is not linked to the third column the closure is not a knot by the first case of Lemma~\ref{lemma_links}, otherwise in the third column there is in particular a brick $v_3''$ linked to $v_2$ from above, and we get a tripod by taking $v_1\rightarrow v_2'\rightarrow v_2 \rightarrow v_3 \rightarrow \{\textit{path}\}$ and adding $v_3''$ (again, we use that $v_3'$ is not linked to the path above $v_4$, hence $v_3''$ is also not).
				
					\begin{figure}[h]
						\centering
						\def\svgwidth{.9\columnwidth}			\import{Figures/big_braid_index}{Ab_case_Cb1.pdf_tex}
						\caption{Diagrams for Case~\ref{case_above_C_b1}, when $v_3$ and $w_3$ coincide and there is no brick in the second column below $v_3$.}
						\label{Ab_Cb1}
					\end{figure}
				
					\item\label{case_above_C_b2} If in the second column there are only bricks below $v_2$, by minimality of the number of strands there must be a brick $v_3''$ in the third column above $v_2$. If $v_2$ is not linked to the first column, as in the left of Figure~\ref{Ab_Cb2}, we can simply take our original path starting from the first column and add to it $v_3''$. If $v_2$ is linked to the left, notice that by the condition on braid relations it can only be linked to a brick $v_1$ from below; in Figure~\ref{Ab_Cb2} we show that we always get a tripod or a link with at least two components. 
					
					\begin{figure}[h]
						\centering
						\def\svgwidth{\columnwidth}			\import{Figures/big_braid_index}{Ab_case_Cb2.pdf_tex}
						\caption{Diagrams for Case~\ref{case_above_C_b2}}
						\label{Ab_Cb2}
					\end{figure}
					
					\item\label{case_above_C_b3} Finally, if in the second column there are only bricks above $v_2$, let us consider $v_2'$ the first brick of the second column linked to a brick $v_1$ of the first column (starting from $v_2$ upwards, potentially $v_2' = v_2$). If there is still a brick $v_2''$ above it, up to elementary conjugation on the first column we can assume that $v_1$ is above $v_2'$, as in the left of Figure~\ref{Ab_Cb3}. We now directly see that we can suppose there is only one brick in the first column and that according to whether $v_2''$ is linked to its right or not, we either get a tripod or a link with more than one component by Lemma~\ref{lemma_links}. If there are no more bricks above $v_2'$, we are left with the diagram at the right-hand side of Figure~\ref{Ab_Cb3}. By Lemma~\ref{lemma_braid_index}, we know that there must be bricks above and below $v_1$ and we conclude with an usual argument, according to the number of those bricks.			
				
					\begin{figure}[h]
						\centering
						\def\svgwidth{.5\columnwidth}			\import{Figures/big_braid_index}{Ab_case_Cb3.pdf_tex}
						\caption{Diagrams for Case~\ref{case_above_C_b3}}
						\label{Ab_Cb3}
					\end{figure}
				
			\end{enumerate}
		
			\end{enumerate}
			
		\end{enumerate}
	
	\item\label{case_below} We finally have to treat the case where there is a free brick $w_3'$ below $w_3$, but no brick above $v_3$. Once more, we distinguish according to how $w_3'$ is connected to the path.
	
		\begin{enumerate}
			\item\label{case_below_A} Let us first suppose that $w_3'$ is linked to a brick $w_2$ of the original path in the second column (which, by construction, will also be linked to $v_3$). Then either $v_3$ and $w_3$ are adjacent or they coincide, as in Figure~\ref{Bel_A}.
			
			\begin{figure}[h]
				\centering
				\def\svgwidth{.4\columnwidth}			\import{Figures/big_braid_index}{Bel_case_A.pdf_tex}
				\caption{The two main possibilities for Case~\ref{case_below_A}}
				\label{Bel_A}
			\end{figure}
			
			\begin{enumerate}
				\item\label{case_below_A_a}	If $v_3$ and $w_3$ are distinct, by construction we furthermore know that $v_3$ and $w_3'$ are not linked to any brick of the fourth column. If $v_3$ is linked to a brick $v_2$ of the second column above $w_2$, we know that in turns $v_2$ needs to be linked to the first column. In this case, we could simply connect the first column to $v_3$ via $v_2$ (thus skipping $w_2$), continue with our original path and add to it $w_3'$ to get a tripod. Similarly, suppose that $w_3'$ is linked to some brick $w_2'$ in the second column below $w_2$. If there is a connection between the first and second columns below $w_2$, the previous argument still applies: we can connect $w_3'$ to the first column bypassing $w_2$, continue with our original path from $w_3$ and connect $v_3$ as isolated leaf of the tripod. Otherwise, all the bricks in the second column below $w_2$ are "free" and can be added to our path. In particular, if there are at least two of them we are done. Moreover, as shown in the left of Figure~\ref{Bel_Aa}, we also directly find a tripod if there are at least two bricks above $w_2$ or if $w_2$ is not connected to the first column. We are then now left with the rightmost diagram of Figure~\ref{Bel_Aa}. Here it is clear that if there are at least two bricks in the first column we find a tripod (potentially after one elementary conjugation), otherwise Lemma~\ref{lemma_braid_index} forces the existence of a brick above $w_2$, in what case the closure is not a knot by Lemma~\ref{lemma_links}.

				\begin{figure}[h]
					\centering
					\def\svgwidth{.75\columnwidth}			\import{Figures/big_braid_index}{Bel_case_Aa.pdf_tex}
					\caption{First diagrams for Case~\ref{case_below_A_a}, when $w_3'$ is linked to the second column from below.}
					\label{Bel_Aa}
				\end{figure}

				 We can therefore now suppose that there is also no brick of the second column below $w_3'$, as depicted in the left of Figure~\ref{Bel_Aa_2}.
				 If in the second column there are bricks both above and below $w_2$, we are basically in the situation of Case~\ref{case_above_C_b1} (with the appropriate changes in the third column) and the same arguments apply. If there are only bricks above $w_2$, considering $v_2'$ the first brick of the second column linked to a brick $v_1$ of the first column, we get a diagram as in the right of Figure~\ref{Bel_Aa_2}. Note that this is analogous to Case~\ref{case_above_C_b3} and we conclude similarly. The case where there are only bricks below $w_2$ is symmetric.
				 
				 \begin{figure}[h]
				 	\centering
				 	\def\svgwidth{.55\columnwidth}			\import{Figures/big_braid_index}{Bel_case_Aa_2.pdf_tex}
				 	\caption{Final diagrams for Case~\ref{case_below_A_a}.}
				 	\label{Bel_Aa_2}
				 \end{figure}

				\item If $v_3$ and $w_3$ coincide, as in the right-hand side of Figure~\ref{Bel_A}, we know that $w_2$ needs to be linked to a brick $v_1$ in the first column.
				
				\begin{enumerate}
					\item\label{case_below_A_b1} If $v_1$ is above $w_2$, as in the left of Figure~\ref{Bel_Ab1}, we are in a situation very similar to Case~\ref{case_ab-bel_B_a}. First, after removing all the cases where one can directly find a tripod, we can suppose that there are at most two bricks in the first column, both linked to $w_2$, and we know that the brick immediately above $v_2$ (if any) is linked to $v_1$. We are then left with diagrams as in Figure~\ref{Bel_Ab1_2}. In the right-hand side we directly see that the closure is not a knot, while the left-hand side can be solved as in Case~\ref{case_ab-bel_B_a} (compare with Figure~\ref{Ab-Bel_Ba}).
					
					\begin{figure}[h]
						\centering
						\def\svgwidth{\columnwidth}			\import{Figures/big_braid_index}{Bel_case_Ab1.pdf_tex}
						\caption{First diagrams for Case~\ref{case_below_A_b1}.}
						\label{Bel_Ab1}
					\end{figure}
				
					\begin{figure}[h]
						\centering
						\def\svgwidth{.45\columnwidth}			\import{Figures/big_braid_index}{Bel_case_Ab1_2.pdf_tex}
						\caption{Final diagrams for Case~\ref{case_below_A_b1}. Recall that there is no brick in the third column above $v_3$.}
						\label{Bel_Ab1_2}
					\end{figure}
				
					\item\label{case_below_A_b2} If $w_2$ is not linked to any brick in the first column from above, as in the left of Figure~\ref{Bel_Ab2}, after removing some easy cases shown in Figure~\ref{Bel_Ab2}, we can suppose that there is at most one brick in the first column, and we are left with a diagram as in the left-hand side of Figure~\ref{Bel_Ab2_2}. Note that this is similar to Case~\ref{case_ab-bel_A_b1}, compare with the rightmost diagram of Figure~\ref{Ab-Bel_Ab1_2}.
					
					\begin{figure}[h]
						\centering
						\def\svgwidth{\columnwidth}			\import{Figures/big_braid_index}{Bel_case_Ab2.pdf_tex}
						\caption{First diagrams for Case~\ref{case_below_A_b2}.}
						\label{Bel_Ab2}
					\end{figure}
				
					Now, if $b$ is not linked to the path in the fourth column, as was the case in Case~\ref{case_ab-bel_A_b1}, or the brick denoted by $a$ does not exist, the same argument discussed therein still works. If $b$ is linked to the path in the fourth column from below, one can consider $v_1 \rightarrow a \rightarrow w_2' \rightarrow w_3' \rightarrow b \rightarrow \{\textit{path}\}$ and add $v_3$ to get a tripod. Similarly if $w_3'$ is linked to the path in the fourth column from below. Finally, if $b$ is linked to the path in the fourth column from above but $w_3'$ is not, we are in the case drawn in the right-hand side of Figure~\ref{Bel_Ab2_2}. If in the third column there are no bricks below $a$, one can simply perform an elementary conjugation on the second column to get a brick $v_2$ above $w_2$, take the original path starting from $v_3$ and add to it $w_3' \rightarrow w_2'$ and $v_2$ to obtain a tripod. Finally, if in the third column there is a brick below $a$, in particular $w_2'$ is linked to a brick $b'$ of the third column below $b$. One can hence take $v_1 \rightarrow a \rightarrow w_2' \rightarrow b' \rightarrow \cdots \rightarrow b \rightarrow v_4 \rightarrow \{\textit{path}\}$ (or potentially skipping $w_2'$ if $b'$ is also linked to $a$ ) and connect $v_3$ to $v_4$.
					
					\begin{figure}[h]
						\centering
						\def\svgwidth{.55\columnwidth}			\import{Figures/big_braid_index}{Bel_case_Ab2_2.pdf_tex}
						\caption{Additional diagrams for Case~\ref{case_below_A_b2}.}
						\label{Bel_Ab2_2}
					\end{figure}
				
				\end{enumerate}
		
			\end{enumerate}
		
		\item\label{case_below_B} We now suppose that $w_3'$ is not linked to the original path in the second column (and therefore has to be linked to the path in the fourth column). By construction, we know that $v_3$ is linked to some brick in the second column.

			\begin{enumerate}
				\item\label{case_below_B_a} Assume first that $v_3$ is linked to a brick $w_2$ above it, as in the left of Figure~\ref{Bel_Ba}. Note that the situation is similar to the one analyzed in Case~\ref{case_ab-bel_B}, and many of the arguments discussed therein will apply to the current case. First of all, we know that $w_2$ will be linked to a brick of the first column. If it is linked to a brick $v_1$ above it, as in the right of Figure~\ref{Bel_Ba}, we conclude directly as in Case~\ref{case_ab-bel_B_a} (compare also with the left diagram of Figure~\ref{Bel_Ab1} and the discussion of Case~\ref{case_below_A_b1}).
				
				\begin{figure}[h]
					\centering
					\def\svgwidth{.5\columnwidth}			\import{Figures/big_braid_index}{Bel_case_Ba.pdf_tex}
					\caption{First diagrams for Case~\ref{case_below_B_a}}
					\label{Bel_Ba}
				\end{figure}
			
				We can now assume that $w_2$ is only linked to a brick $v_1$ below it, as in the left of Figure~\ref{Bel_Ba_2}.
				By Case~\ref{case_ab-bel_B_b}, we are only left with the two diagrams in the center of Figure~\ref{Bel_Ba_2}, and we furthermore can assume that the brick $w_2'$ is not linked to the original path in the third column below $v_3$ (so no other brick of the second column below $w_2'$ is) and that, as drawn, there is at least one brick $w_2''$ in the second column below $v_1$ (compare with Figure~\ref{Ab-Bel_Bb} and the discussion preceding it).
				
				\begin{figure}[h]
					\centering
					\def\svgwidth{\columnwidth}			\import{Figures/big_braid_index}{Bel_case_Ba_2.pdf_tex}
					\caption{Diagrams for Case~\ref{case_below_B_a}}
					\label{Bel_Ba_2}
				\end{figure}
			
				After removing all the cases where one can directly find a tripod, as shown in Figure~\ref{Bel_Ba_3}, we are left with the rightmost diagram of Figure~\ref{Bel_Ba_2}.
				
				\begin{figure}[h]
					\centering
					\def\svgwidth{1.1\columnwidth}			\import{Figures/big_braid_index}{Bel_case_Ba_3.pdf_tex}
					\caption{Additional diagrams for Case~\ref{case_below_B_a}}
					\label{Bel_Ba_3}
				\end{figure}
				
				But now we observe that there needs to be a brick in the third column below $w_2''$, otherwise the closure is not a knot by Lemma ~\ref{lemma_links}. In particular, $w_2'$ is linked to a brick of the third column below $v_3$. Recalling that $w_2'$ is not linked to the original path in the third column below $v_3$, it follows that either $w_2'$ is linked to $w_3'$ or to some brick below it (in the notation of Figure~\ref{Bel_Ba}).
				
				\begin{enumerate}
					\item \label{case_below_B_a1} Let us first assume that $w_2'$ is linked to $w_3'$, as in Figure~\ref{Bel_Ba1}. Notice that in that case by construction $v_3$ is not linked to the path in the fourth column. If $w_3'$ is only linked to the second column in $w_2'$, as in the left of Figure~\ref{Bel_Ba1}, we can find can simply take $v_1 \rightarrow w_2'' \rightarrow \cdots \rightarrow w_2'\rightarrow w_3''\rightarrow \{\textit{path in the fourth column}\}$ and connect $v_3$ to $w_2'$. Otherwise, we know that there exists at least one brick in the third column below $w_3'$, as in the right of Figure~\ref{Bel_Ba1}. Consider now how $w_3'$ is connected to the path in the fourth column: if it is only connected to $v_4$, all the bricks of the third column below $w_3'$ are free to use and we can take $w_2 \rightarrow w_2' \rightarrow w_3'\rightarrow v_4\rightarrow \{\textit{path}\}$ and connect the brick below $w_3'$ as leaf of the tripod; if $w_3'$ is connected to the path in the fourth column from below, via a brick $w_4$, take instead $v_1 \rightarrow w_2'' \rightarrow \cdots \rightarrow w_3'\rightarrow w_4''\rightarrow \{\textit{path}\}$ and connect $w_3$ as a leaf.
					
					\begin{figure}[h]
						\centering
						\def\svgwidth{.5\columnwidth}			\import{Figures/big_braid_index}{Bel_case_Ba1.pdf_tex}
						\caption{Diagrams for case~\ref{case_below_B_a1}.}
						\label{Bel_Ba1}
					\end{figure}
					
					\item\label{case_below_B_a2} If $w_2'$ is linked to a brick $w_3''$ below $w_3'$, we have one of the diagrams of Figure~\ref{Bel_Ba2}. If $v_3$ and $w_3$ are distinct, as in the left of Figure~\ref{Bel_Ba2}, we recognize the diagram of Figure~\ref{Nobricks}, and the argument discussed there applies to the current setting. If $v_3$ and $w_3$ coincide, we have the diagram on the right of Figure~\ref{Bel_Ba2}. Once again, we consider how $w_3'$ is connected to the path in the fourth column. If $w_3'$ is linked to the path in the fourth column under $v_4$, we can simply take $ v_1 \rightarrow w_2 \rightarrow v_3 \rightarrow w_3' \rightarrow \{\textit{path}\}$ and add a brick of the fourth column (which will be at most $v_4'$). Finally, if $w_3'$ is not linked to the path in the fourth column below $v_4$, then all the bricks in the third column under $w_3'$ also are not, and can hence be freely used. If there is still at least one brick in the third column under $w_3''$, we can take $w_2 \rightarrow w_2' \rightarrow w_3'' \rightarrow \cdots \rightarrow w_3' \rightarrow v_4 \rightarrow \{\textit{path}\}$ and add a brick below $w_3''$ to get a tripod. If $w_3''$ is the last brick of the third column, in particular it is not linked to any of the bricks below $w_2'$, so we can take $v_1 \rightarrow w_2'' \rightarrow \cdots \rightarrow w_2' \rightarrow v_3 \rightarrow \{\textit{path}\}$ and connect $w_3''$ to $w_2'$.
					
					\begin{figure}[h]
						\centering
						\def\svgwidth{.5\columnwidth}			\import{Figures/big_braid_index}{Bel_case_Ba2.pdf_tex}
						\caption{Diagrams for case~\ref{case_below_B_a2}.}
						\label{Bel_Ba2}
					\end{figure}
				
				\end{enumerate}

				\item\label{case_below_B_b} We can now suppose that $v_3$ is only linked to a brick $w_2$ of the second column below it. In particular, our original path was passing by $w_2$, which is therefore not linked to $w_3'$. We now get the diagrams of Figure~\ref{Bel_Bb}. In the left-hand side, where $v_3$ and $w_3$ are distinct, we end up with a diagram similar to Figure~\ref{Nobricks} and the exact same arguments apply. Suppose now that $v_3$ and $w_3$ coincide, as in the right-hand side of Figure~\ref{Bel_Bb}. If $w_3'$ is linked to the path in the fourth column under $v_4$, we can simply take $ w_2 \rightarrow v_3 \rightarrow w_3' \rightarrow \{\textit{path}\}$ and add a brick of the fourth column (which will be at most $v_4'$). Otherwise, we are in a situation perfectly symmetric to Case~\ref{case_above_C_b}, in particular as in Figures~\ref{Ab_Cb1}, \ref{Ab_Cb2} and~\ref{Ab_Cb3}, and again the same arguments apply.
		
				\begin{figure}[h]
					\centering
					\def\svgwidth{.45\columnwidth}			\import{Figures/big_braid_index}{Bel_case_Bb.pdf_tex}
					\caption{Diagrams for case~\ref{case_below_B_a2}.}
					\label{Bel_Bb}
				\end{figure}
			
			\end{enumerate}
		
		\end{enumerate}
		
	\end{enumerate}
	
\end{proof}

\bigskip

We still have to consider the braids of intermediate positive braid index. One could probably study those by hands, in a similar way to Prop.~\ref{prop 3-braids} and Prop.~\ref{prop big braid index}, but the computations would quickly get too complicated. Instead, we will treat them by directly applying Proposition~\ref{prop 3-braids}, at the cost of loosing some low genus cases.

\begin{prop}\label{prop_inter_braid_index}
	Let $\beta$ be a prime positive braid on $4\leq N \leq 10$ strands whose closure is a knot not of type $A_n$. Suppose that $\beta$ has genus $g(\beta) > 4(N-1)$. Then there exists a family of curves on $\Sigma_{\beta}$ that is an $E$-arboreal spanning configuration on a subsurface of genus at least $5$.
\end{prop}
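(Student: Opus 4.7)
The plan is to reduce the problem to Proposition~\ref{prop 3-braids} by extracting from $\beta$ a consecutive $3$-sub-braid of sufficiently large genus. For each $i \in \{1,\dots,N-2\}$, let $\beta^{(i)}$ denote the positive $3$-braid obtained by keeping only the generators $\sigma_i$ and $\sigma_{i+1}$ of $\beta$. Three immediate observations do the geometric work: the brick diagram of $\beta^{(i)}$ is precisely columns $i$ and $i+1$ of the brick diagram of $\beta$, so that the linking graph of $\beta^{(i)}$ is the induced subgraph of the linking graph of $\beta$ on those bricks; the fibre surface $\Sigma_{\beta^{(i)}}$ embeds into $\Sigma_\beta$ as the subsurface spanned by the corresponding standard curves; and any positive braid isotopy performed on $\beta^{(i)}$ lifts to an isotopy of $\beta$ supported in columns $i, i+1$. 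Consequently, as soon as $\beta^{(i)}$ satisfies the hypotheses of Proposition~\ref{prop 3-braids}, the $E$-arboreal spanning configuration that proposition produces is automatically such a configuration on a subsurface of $\Sigma_\beta$ of genus at least~$5$.

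The key step is then a pigeonhole on brick counts. Writing $b_j$ for the number of bricks in column $j$, the assumption that the closure of $\beta$ is a knot gives $\sum_{j=1}^{N-1} b_j = 2g(\beta)$, and therefore
\[
\sum_{i=1}^{N-2}(b_i+b_{i+1}) \;=\; 4g(\beta) - b_1 - b_{N-1}.
\]
Setting $M := \max_i(b_i+b_{i+1})$ and using $b_1, b_{N-1} \leq M$, this rearranges to $MN \geq 4g(\beta) > 16(N-1)$, whence $M \geq 13$ for every $4 \leq N \leq 10$. Since a $3$-braid with $b$ bricks and $r'$ closure components has genus $(b+1-r')/2 \geq (b-2)/2$, the pair realising $M$ yields a $3$-sub-braid $\beta^{(i)}$ of genus at least~$5$ (in fact at least~$6$). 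Moreover, each of the three exceptional braids in Proposition~\ref{prop 3-braids} has exactly $11$ bricks and so is ruled out by $M \geq 13$, while a $3$-sub-braid of type $A_n$ or $D_n$ concentrates essentially all its bricks in a single column and is incompatible with $b_i + b_{i+1} \geq 13$ in a pair where both columns contribute meaningfully.

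The hardest step will be ensuring primality of $\beta^{(i)}$: the induced sub-linking graph on two adjacent columns can be disconnected even when the full linking graph of $\beta$ is connected, in which case Proposition~\ref{prop 3-braids} does not directly apply. Handling this requires a careful selection among the $N-2$ candidate pairs, exploiting the connectedness of the full linking graph of $\beta$ and possibly performing preliminary braid relations and elementary conjugations to redistribute bricks into a pair for which $\beta^{(i)}$ is prime. Similarly, if the pair of highest brick count happens to give an $A_n$ or $D_n$ sub-braid, one has to switch to a different pair; this is possible thanks to the hypothesis that $\beta$ itself is not of type $A_n$ together with its global primality. Once such a pair is identified, Proposition~\ref{prop 3-braids} applies directly and produces the required configuration on a subsurface of $\Sigma_\beta$ of genus at least $5$.
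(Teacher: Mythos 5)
Your overall strategy --- pigeonhole the $2g(\beta)>8(N-1)$ bricks over the $N-2$ adjacent column pairs to find a two-column subword of first Betti number at least $12$, then feed it to Proposition~\ref{prop 3-braids} --- is exactly the paper's, and your counting (which gives $M\geq 13$, hence genus at least $5$ for the sub-braid and avoidance of the three $11$-brick exceptional braids) is correct. One of your ``immediate observations'' is not, however: an elementary conjugation or braid relation applied to $\beta^{(i)}$ does \emph{not} in general lift to a braid isotopy of $\beta$ supported in columns $i,i+1$ (cyclically permuting a crossing of $\beta^{(i)}$ must carry it past crossings of the neighbouring columns). The paper's fix is to observe that the effect of these moves on the standard curves is realized by Dehn twists, so the required configuration still exists as a family of curves in $\Sigma_{\beta^{(i)}}\subset\Sigma_\beta$ --- which is precisely why the proposition asserts a family of curves rather than an induced subtree of the linking graph.

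The genuine gap is your treatment of the case where the sub-braid is of type $A_n$ or $D_n$. Your claim that such $3$-braids ``concentrate essentially all their bricks in a single column'' is false: $\sigma_i^{a}\sigma_{i+1}\sigma_i\sigma_{i+1}^{b}$ closes to the torus link $T_{2,a+b+1}$ and so is of type $A_{a+b}$, and $\sigma_i^{a}\sigma_{i+1}\sigma_i\sigma_{i+1}^{b}\sigma_i\sigma_{i+1}$ is of type $D_{a+b+2}$, with the bricks split arbitrarily between the two columns; neither is excluded by $b_i+b_{i+1}\geq 13$. Nor is ``switching to a different pair'' available: no other pair is guaranteed to carry $12$ bricks, let alone to avoid being of type $A_n$ or $D_n$ itself. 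The paper does genuine work here. In the $A_n$ case the linking graph of the subword is a path, so Lemma~\ref{lemma_braid_index} replaces $\beta$ by a braid on fewer strands with the same closure and the same monodromy group, and one iterates (eventually landing either in Proposition~\ref{prop 3-braids} with genus $>12$, hence away from its exceptions, or back in the present proposition). In the $D_n$ case one classifies the three normal forms of the subword and shows directly, using primality of $\beta$ and Lemma~\ref{lemma_links}, that a brick of a neighbouring column can be attached to the $D_n$-tree to create the required $E_6$. Without these two mechanisms your argument does not close. Your worry about primality of the sub-braid is legitimate (the paper is also terse on this point), but you flag it without resolving it, and it is in any case secondary to the $A_n$/$D_n$ issue above.
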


The curves appearing in Proposition~\ref{prop_inter_braid_index} will not necessarily be vertices of the intersection graph, but we might need to do some "change of basis", i.e. modify some of the curves by applying appropriate Dehn twists. This will change the intersection pattern of the curves in question, but not the subsurface they span nor the subgroup that the corresponding Dehn twists generate in $\mathrm{Mod}(\Sigma_{\beta})$. 

\begin{proof}		
	Let $\beta$ be such a positive braid. Since $g(\beta) > 4(N-1)$, there exists $1 \leq i \leq N-2$ such that the subword induced by all the generators $\sigma_i$ and $\sigma_{i+1}$ has first Betti number at least $12$, when seen as a $3$-braid. Let us denote this subword by $\beta_{i,i+1}$. By Proposition~\ref{prop 3-braids}, either $\beta_{i,i+1}$ is positively isotopic to a $3$-braid $\beta_{i,i+1}'$ containing the required spanning configuration, or it is of type $A_n$ or $D_n$ (the other finitely many exceptions have first Betti number $11$).
	
	In the first case, the required positive braid isotopy might not be realizable when $\beta_{i,i+1}$ is seen as a subword of $\beta$. However, since at the level of curves the effect of braid relations and elementary conjugations is obtained by Dehn twists, we can still find a family of curves in $\Sigma_{\beta_{i,i+1}} \subset \Sigma_{\beta}$ whose intersection pattern is equal to the linking graph of $\beta_{i,i+1}'$, and the result follows.
	
	If $\beta_{i,i+1}$ is of type $A_n$, since there are only three strands one can directly verify that up to elementary conjugation its linking graph is a path. We can therefore apply Lemma~\ref{lemma_braid_index} to $\beta$ and reduce it to a braid with less strands.
	
	If $\beta_{i,i+1}$ is of type $D_n$, up to elementary conjugation and symmetry it is of one of three forms: $\sigma_i^{n-3}\sigma_{i+1}^2\sigma_i\sigma_{i+1}^2$, $\sigma_i^{n-2}\sigma_{i+1}\sigma_i^2\sigma_{i+1}$ or $\sigma_i^a\sigma_{i+1}\sigma_i\sigma_{i+1}^b\sigma_i\sigma_{i+1}$ with, $a+b = n-2$. This follows from a direct computation, or can be seen by applying the classification of checkerboard graphs of type $D_n$ contained in Lucas Vilanova's PhD thesis \cite{vilanova_positive_2020}. In all the cases one can see that, if the closure is connected, we can always add a brick in a neighbouring column and find the required subtree. We will do it for $\beta_{i,i+1} = \sigma_i^{n-3}\sigma_{i+1}^2\sigma_i\sigma_{i+1}^2$, the others are analogous. In this case, we know that $i < N-2$, otherwise the closure is not a knot by Lemma~\ref{lemma_links}. Since $\beta$ is prime, its intersection graph is connected, so at least one of the three bricks in the $i+1$-th column needs to be linked to its right. After removing the cases where one directly finds an appropriate subtree, we are left with one of the three cases of Figure~\ref{figure_intermediate_Dn}. The first one is excluded since the closure is not a knot; in the second one we can find a subtree after braid relation, as shown in the Figure; for the third one, up to elementary conjugation we can suppose that there are no generators $\sigma_{i+2}$ above the last occurrence of $\sigma_{i+1}$. Now we see that if there are at least two bricks in the $i+2$-th column we are done, otherwise either the closure is not a knot (if $i+2 = N-1$) or we can still add one brick further to the right and again find the required subtree.
	
	\begin{figure}
		\centering
		\def\svgwidth{.8\columnwidth}			\import{Figures}{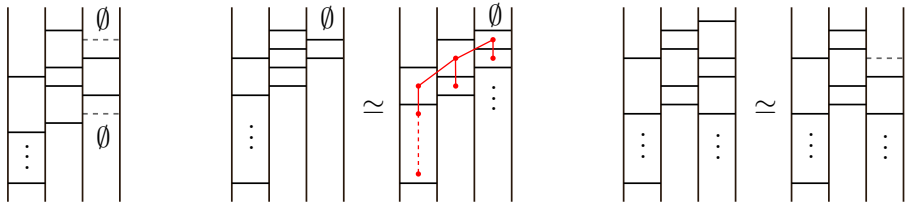}
		\caption{The columns $i,i+1$ and $i+2$ of a braid $\beta$ such that $\beta_{i,i+1} = \sigma_i^{n-3}\sigma_{i+1}^2\sigma_i\sigma_{i+1}^2$.}
		\label{figure_intermediate_Dn}
	\end{figure}
\end{proof}

Everything is now ready to prove our main Theorem.

\begin{proof}[Proof of Theorem \ref{theorem_framed}]
	Let $\beta$ be a prime positive braid not of type $A_n$ and whose closure is a knot. We want to prove that $\mathit{MG}(\beta) = \mathrm{Mod}(\Sigma_{\beta},\phi_{\beta})$ by using Proposition~\ref{prop_finite_generation}. Let $V = \{\gamma_1,\cdots,\gamma_{2g}\}$ be the family of standard curves on $\Sigma_{\beta}$ corresponding to the vertices of the linking graph of $\beta$. In Propositions~\ref{prop 3-braids},~\ref{prop big braid index} and~\ref{prop_inter_braid_index} we have constructed the starting $E$-arboreal spanning configuration of genus $h \geq 5$ for all but finitely many such prime positive braids. In general, this is obtained by taking a subfamily of curves $V_0' \subset V$ and potentially modifying some of them by applying Dehn twists around other curves of $V_0'$, obtaining a family $V_0$ of curves in $\Sigma_{\beta}$. In particular, the subsurface spanned by $V_0$ is the same as the subsurface spanned by $V_0'$. It is now clear that the remaining curves of $V \setminus V_0'$ can be attached in an order that respects the definition of $h$-assemblage, so that
	$$ \mathrm{Mod}(\Sigma_{\beta},\phi_{\beta}) = \langle T_c \mid c\in V_0 \cup (V\setminus V_0') \rangle = \langle T_c \mid c \in V \rangle = \mathit{MG}(\beta).$$
\end{proof}

\begin{rem}\label{remark_links_sing}
	In fact, our proof of Theorem~\ref{theorem_framed} also applies to many links. Indeed, the requirement of the closure of $\beta$ being a knot was uniquely used to exclude links as in Lemma~\ref{lemma_links}: all these have one unknotted component whose total linking number with the other components is precisely $2$. In particular, the proof works without problems for links whose components are all knotted or whose pairwise linking numbers are all big enough.
	
	Interestingly, this is essentially always the case in the special class of links of singularities, if we exclude the special families $A_n$ and $D_n$. In what follows, the reader can refer to \cite{brieskorn_plane_1986} for the background material on plane curve singularities. If $f_1$ and $f_2$ are irreducible singularities with associated knots $K_1$ and $K_2$, then the link of $f=f_1f_2$ is $L(f)= K_1 \cup K_2$, and the linking number $lk(K_1,K_2)$ equals the intersection multiplicity of the two branches. It follows that in the link of a singularity all linking numbers are strictly positive. Now, let $f$ be a singularity whose link has a component which is unknotted and has total linking number with the other components equal to $2$, as in Lemma~\ref{lemma_links}. By the previous discussion, $f$ has at most three branches. Suppose first that $f=f_1f_2$ has only two branches, and $L(f) = K_1 \cup K_2$. Since one component is the unknot and the multiplicity of a singularity equals the braid index of the associated link by \cite{williams_braid_1988}, we can assume that $f_2 = y+x\tilde{f}(x,y)$. Let now $m$ be the multiplicity and $y = g(x^{\frac{1}{m}})$ the Puiseux series of $f_1$, we obtain $2 = lk(K_1,K_2) = \mathrm{ord} (g(t)+t^m\tilde{f}(t^m,g(t))) \geq m$, from which we conclude that $K_1$ has braid index at most $2$. Finally, since the link of a reducible singularity is determined by the components and the pairwise linking numbers, and all the possible pairs of a positive $2$-braid and an unknot with linking number $2$ are realized by singularities of type $A_n$ or $D_n$, it follows that $f$ belongs to one of those two families. Similarly, if $f$ has three branches one can conclude that all the components of $L(f)$ are unknotted, so that the link is determined by the triple of linking numbers (where two of the linking numbers are now equal to $1$). Since all such triples are realized by singularities of type $D_n$, $f$ must belong to this family.
	Therefore, up to finitely many low genus exceptions, we completely recover the main result of \cite{portilla_cuadrado_vanishing_2021}, saying that the geometric monodromy group of a singularity not of type $A_n$ and $D_n$ is a framed mapping class group.
	
\end{rem}

\begin{rem}\label{remark_links}
	
	In contrast to the case of singularities, it does not seem possible to extend the proof to all positive braid links. Even excluding the two exceptional families $A_n$ and $D_n$, there are other infinite families, both with bounded and unbounded braid index, that most likely do not contain an $E_6$. For example, we could not find such subtrees for the braids $\beta_n = \sigma_1\sigma_2^2\sigma_1\sigma_2^{n-4}\sigma_3\sigma_2^2\sigma_3 \in B_4^+$, whose linking graph is the extended Dynkin diagram $\tilde{D}_n$, nor for $\beta_N =( \sigma_1\cdots\sigma_N\sigma_N\cdots\sigma_1)^2 \in B_{N+1}^+$. We do not know whether the corresponding monodromy groups are equal to the whole framed mapping class group.
\end{rem}

\printbibliography

\bigskip
\bigskip
\bigskip
\noindent
\texttt{livio.ferretti@unige.ch}

\end{document}